\documentclass[a4paper,UKenglish,cleveref,numberwithinsect]{lipics-v2021}
\pdfoutput=1

\hideLIPIcs
\makeatletter
\AtBeginDocument{\nolinenumbers\let\@oddfoot\@empty\let\@evenfoot\@empty
  \hypersetup{pdfsubject={Preprint}}}
\makeatother

\title{Persistent Magnitude Homology for Quantitative Equational Theories}
\titlerunning{Persistent Magnitude Homology for Quantitative Equational Theories}

\author{Luciano Melodia}{Friedrich-Alexander Universität Erlangen-Nürnberg, Germany}{luciano.melodia@fau.de}{https://orcid.org/0000-0002-7584-7287}{}

\authorrunning{L. Melodia}

\Copyright{Luciano Melodia}

\ccsdesc[500]{Theory of computation~Categorical semantics}
\ccsdesc[300]{Mathematics of computing~Algebraic topology}
\ccsdesc[300]{Theory of computation~Equational logic and rewriting}

\keywords{Quantitative universal algebra, quantitative equational theory, persistent magnitude homology, enriched category, persistence module, barcode}

\supplement{The source code for the computations, checks, and figures.}
\supplementdetails[subcategory={Source Code}]{Software}{https://codeberg.org/Jiren/PersHomAlg}

\usepackage{mathtools}
\usepackage{booktabs}
\usepackage{colortbl}
\usepackage{tikz}
\usetikzlibrary{cd,arrows.meta,positioning,calc,fit,backgrounds}
\usepackage{sdproof}

\definecolor{sdRegion1}{HTML}{C49D97}
\definecolor{sdRegion2}{HTML}{BFB397}
\definecolor{sdRegion3}{HTML}{ACC9B3}
\definecolor{sdRegion4}{HTML}{AAD9DF}
\definecolor{sdRegion5}{HTML}{CFE0FD}
\definecolor{sdRegion6}{HTML}{FFE7FA}
\definecolor{sdAccent1}{HTML}{C74541}
\definecolor{sdAccent2}{HTML}{547E05}
\definecolor{sdAccent3}{HTML}{007E8A}
\definecolor{sdAccent4}{HTML}{7263C9}
\definecolor{sdRegionUnit}{HTML}{E5E5E5}
\definecolor{sdHint}{HTML}{815955}
\renewcommand\sdeq[1]{\;\overset{\text{\scriptsize\color{sdHint}#1}}{=}\;}

\definecolor{accent}{HTML}{B5470B}
\definecolor{accentbright}{HTML}{E07A1F}
\definecolor{accentsoft}{HTML}{F3C58E}
\definecolor{accentpale}{HTML}{FBF2E7}
\definecolor{ink}{HTML}{16171A}
\definecolor{inkmute}{HTML}{16171A}
\definecolor{complement}{HTML}{1F5E70}
\colorlet{tintleft}{accent!12}
\colorlet{tintright}{complement!12}
\colorlet{sdcodomain}{accent!10}
\colorlet{sddomain}{complement!10}
\colorlet{chasetint}{accentsoft}
\newcommand{\hlleft}[1]{{\setlength{\fboxsep}{1.5pt}\colorbox{tintleft}{\ensuremath{#1}}}}
\newcommand{\hlright}[1]{{\setlength{\fboxsep}{1.5pt}\colorbox{tintright}{\ensuremath{#1}}}}

\newlength{\lesjut}
\tikzset{
  les/.style 2 args={
    rounded corners=6pt,
    to path={(\tikztostart.east) -- ([xshift=\lesjut]\tikztostart.east)
             |- (#1) node[fill=white, inner sep=1.6pt] {$\scriptstyle #2$}
             -| ([xshift=-\lesjut]\tikztotarget.west)
             -- (\tikztotarget.west)}},
  trace/.style={-,draw=chasetint,line width=0.3pt,double=white,
                double distance=1.6pt,shorten <=-1.2pt,shorten >=-1.2pt},
  commutative diagrams/comm/.style={phantom,
    "{\color{inkmute}\circlearrowleft}" description}}

\numberwithin{equation}{section}
\allowdisplaybreaks

\crefformat{enumi}{(#2#1#3)}
\Crefformat{enumi}{(#2#1#3)}

\newcommand{\cat}[1]{\mathsf{#1}}
\newcommand{\Met}{\cat{Met}}
\newcommand{\QAlg}{\cat{QAlg}}
\newcommand{\Set}{\cat{Set}}
\newcommand{\Ab}{\cat{Ab}}
\newcommand{\Vect}{\cat{Vect}}
\newcommand{\SSet}{\cat{sSet}}
\newcommand{\Mod}{\operatorname{Mod}}
\newcommand{\Free}{\operatorname{Free}}
\newcommand{\colim}{\operatorname*{colim}}
\newcommand{\Ob}{\operatorname{Ob}}
\newcommand{\MH}{\operatorname{MH}}
\newcommand{\op}{\mathrm{op}}
\newcommand{\A}{\mathcal{A}}
\newcommand{\V}{\mathcal{V}}
\newcommand{\Rp}{[0,\infty]}
\newcommand{\VCat}{\V\text{-}\cat{Cat}}
\newcommand{\Vfin}{\V_{\mathrm{fin}}}
\newcommand{\Pfin}{\mathcal{P}_{\mathrm{fin}}}
\newcommand{\ar}{\operatorname{ar}}
\newcommand{\MC}{\operatorname{MC}}
\newcommand{\PH}{\operatorname{PH}}
\newcommand{\gr}{\operatorname{gr}}
\newcommand{\End}{\operatorname{End}}
\newcommand{\Nv}{\mathrm{N}}
\newcommand{\VR}{\operatorname{VR}}
\newcommand{\dnote}[1]{{\color{inkmute}\scriptstyle #1}}
\makeatletter
\newcommand{\twoheadrightarrowfill@}{\arrowfill@\relbar\relbar\twoheadrightarrow}
\newcommand{\xtwoheadrightarrow}[2][]{\ext@arrow 0359\twoheadrightarrowfill@{#1}{#2}}
\makeatother
\begin{document}

\maketitle

\begin{abstract}
A quantitative equational theory \(U\) reasons about terms that agree up to a
numerical error. It presents a free algebra \(T_UA\) over a metric space \(A\) of
generators, the terms of the syntax at the least distance the axioms derive, and
that metric is its semantic content. We give a functorial invariant of it, the
persistent magnitude homology of \(T_UA\): a barcode where the module is tame,
finite linear algebra where \(T_UA\) is finite, Lipschitz in each degree.
Magnitude homology is graded by length and knows nothing of
persistence, its persistent refinement nothing of where its bars begin and end,
yet the two are one construction: filtering the length nerve by sublevel sets of
the length yields the persistence module, and the associated graded of that
filtration is the magnitude complex. A long exact sequence exchanges them, and
each side gains what it lacked. Magnitude homology locates the critical values of
the barcode, so a graded computation lists the lengths at which an endpoint can
occur, and the barcode acquires a stability estimate of \((n+1)\delta\) in degree
\(n\) under a perturbation of size \(\delta\), and a computed perturbation
shows that the factor cannot be dropped. An inclusion of theories induces a
morphism of the presenting monads and, where the induced map is bijective and shortens no distance by more than
\(\delta\), a comparison of barcodes under the same bound, so a barcode movement
measures the metric-semantic strength of the added axioms. Four examples are
computed, one in every degree.
\end{abstract}

\section{Introduction}\label{sec:intro}
\looseness=-1
Programs, processes and distributions are often not equal but close, and a
\emph{quantitative equational theory} is a logic for that situation: its
judgements \(t=_\varepsilon s\) bound the error between two terms, and its free
algebras are extended metric spaces with nonexpansive operations, as Mardare,
Panangaden and Plotkin construct them
\cite[Def.~2.2, Def.~3.1, Thm.~6.1]{MardarePanangadenPlotkin2016QuantitativeAlgebraicReasoning}
\cite{BacciMardarePanangadenPlotkin2020QuantitativeEquationalReasoning,MardarePanangadenPlotkin2017Axiomatizability,MioSarkisVignudelli2022BeyondNonexpansive}.
Such theories axiomatise metric semantics for nondeterminism and probability
\cite{BacciMardarePanangadenPlotkin2018MarkovProcesses,MioVignudelli2020MonadsQuantitativeTheories}
and for effects that carry
numerical error \cite{BacciMardarePanangadenPlotkin2024SumTensor}. What a theory \(U\) presents is its free
algebra \(T_UA\) over a metric space \(A\) of generators, the terms of the syntax
at the least distance the axioms derive, so a theorem of \(U\) is a bound on a
distance in \(T_UA\) and that metric is the semantic content of \(U\).

\looseness=-1
Every extended metric space is a category enriched in the monoidal poset
\((\Rp,\ge,+,0)\), as Lawvere observes
\cite[p.~1]{Lawvere1973MetricSpacesGeneralizedLogic}, and every category enriched in that base carries magnitude homology
\cite{HepworthWillerton2017Categorifying,Leinster2013MagnitudeMetricSpaces}, a homology
theory whose classes are graded by the total length of a chain of points, as Leinster
and Shulman define it \cite[\S5]{LeinsterShulman2021MagnitudeHomology}, so a
quantitative equational theory acquires magnitude homology. Otter's filtration of the enriched nerve
\cite[Thm.~9.4]{Otter2022MagnitudeMeetsPersistence} refines the last link:
the homology becomes a persistence module of abelian groups
\cite{EdelsbrunnerLetscherZomorodian2002TopologicalPersistence,ZomorodianCarlsson2005ComputingPersistentHomology},
and over a field, under the tameness condition of \Cref{sec:barcodes}, it
decomposes into a barcode \cite[Thm.~1.1]{CrawleyBoevey2015Decomposition}. The two homology theories in that chain are not two constructions but
one, and four results follow:
\begin{enumerate}[A]
\item The magnitude complex in \(\Met\) is the associated graded of
  the length filtration on the normalised chains of its nerve
  (\Cref{def:magnitude,prop:mc-explicit}). Magnitude homology is the level-set
  theory of the length and its persistent refinement the sublevel-set theory.
\item A long exact sequence ties the two together (\Cref{thm:les}). It
  forces every endpoint of the barcode to be a length at which magnitude
  homology is nonzero (\Cref{cor:critical}): magnitude homology locates the
  critical values.
\item Moving every distance by at most \(\delta\) moves the persistence
  module by at most \((n+1)\delta\) in degree \(n\) (\Cref{thm:distortion}),
  and in \Cref{ex:distortion} one perturbation moves the barcode by more than
  \(\delta\). The invariant is Lipschitz in each degree, not nonexpansive.
\item An inclusion of theories induces a morphism of presenting monads and
  hence a comparison of persistence modules (\Cref{thm:theorymap}). Under the
  bijectivity condition of \Cref{thm:theorymap}\Cref{itm:boundedInterleaving}
  the barcode moves by at most \(n+1\) times the maximal shortening of a
  distance. This measures the metric-semantic strength of the added axioms.
\end{enumerate}

\looseness=-1
For a test space \(A\) and a degree \(n\) this compares theories
quantitatively: a nonzero interleaving distance between \(\PH_n(T_UA)\) and
\(\PH_n(T_{U'}A)\) certifies that the axioms of \(U'\smallsetminus U\) identify
terms or shorten a distance, and where \(q_A\) is bijective a barcode movement
of size \(\beta\) certifies a shortening by at least \(\beta/(n+1)\)
(\Cref{def:impact}). No completeness is claimed.

\section{The Base of Enrichment}\label{sec:base}
Let \(\V\) be the category with
\[
\Ob\V=\Rp,\qquad
\V(a,b)=\begin{cases}
          \{\bullet\} & \text{when } a\ge b,\\
          \varnothing & \text{when } a<b.
        \end{cases}
\]
Identities and composites are the unique morphisms available, and they exist
because \(a\ge a\), and because \(a\ge b\) and \(b\ge c\) give \(a\ge c\). Equip
\(\V\) with the tensor \(\otimes\colon\Ob\V\times\Ob\V\to\Ob\V\), \((a,b) \mapsto a+b\),
the unit \(I=0\), and the operation
\(\ominus\colon\Ob\V\times\Ob\V\to\Ob\V\) given by
\begin{equation}\label{eq:ominus}
c\ominus a=\begin{cases}
             0 & \text{when } c\le a,\\
             c-a & \text{when } a<c<\infty,\\
             \infty & \text{when } a<c=\infty.
           \end{cases}
\end{equation}
The three cases are mutually exclusive and exhaust \(\Ob\V\times\Ob\V\):
the negation of \(c\le a\) is \(a<c\), which splits into \(c<\infty\) and
\(c=\infty\).

\looseness=-1
Unwind the definition of a \(\V\)-category \(X\)
\cite[Ch.~1]{Kelly1982BasicConcepts}. It consists of a set \(\Ob X\), an object
\(X(x,y)\in\Ob\V\) for each pair \(x,y\in\Ob X\), a morphism
\(j_x\colon I\to X(x,x)\) for each \(x\), and a morphism
\(\mu_{x,y,z}\colon X(y,z)\otimes X(x,y)\to X(x,z)\) for each triple, subject to
associativity and unit axioms. Write \(d(x,y)=X(x,y)\). Then \(j_x\) exists
if and only if \(0\ge d(x,x)\), which forces \(d(x,x)=0\), and \(\mu_{x,y,z}\)
exists if and only if \(d(y,z)+d(x,y)\ge d(x,z)\). Both are unique where they
exist, so neither is data beyond \(d\), and the associativity and unit axioms
are equations between parallel morphisms of \(\V\), hence hold. A
\(\V\)-category is therefore a set \(\Ob X\) with a function
\(d\colon\Ob X\times\Ob X\to\Rp\) satisfying \(d(x,x)=0\) and the triangle
inequality, and nothing further is imposed: distances may be infinite, may fail
to be symmetric, and may vanish off the diagonal. These are the extended
quasi-pseudometric spaces of
Lawvere~\cite[p.~1]{Lawvere1973MetricSpacesGeneralizedLogic},
sometimes called Lawvere metric spaces.
A \(\V\)-functor \(F\colon X\to Y\) is a map \(f\colon\Ob X\to\Ob Y\) together
with a morphism \(f_{x,x'}\colon X(x,x')\to Y(fx,fx')\) for each pair, that is,
with \(d(x,x')\ge d(fx,fx')\), subject to compatibility with the compositions
and the identities of \(X\) and \(Y\). That morphism is again unique where it
exists, and the two compatibilities are equations between parallel morphisms of
the thin category \(\V\), hence hold, so a \(\V\)-functor is the same thing
as a nonexpansive map. Wherever one is named below it carries a capital letter,
and its two underlying maps the matching small one. Write \(\VCat\) for the
category with \(\Ob \VCat \) the small \(\V\)-categories and \(\VCat(X,Y)\)
the set of \(\V\)-functors \(X\to Y\). The identity \(1_{X}\) is the identity
function on \(\Ob X\) together with \((1_{X})_{x,x'}=1_{X(x,x')}\), and the
composite of \(F\colon X\to Y\) with \(G\colon Y\to Z\) is \(x\mapsto gfx\)
on objects together with
\((GF)_{x,x'}=g_{fx,fx'}\circ f_{x,x'}\colon X(x,x')\to Z(gfx,gfx')\) on
hom-objects. The associativity and unitality of this composition are again
equations between parallel morphisms of \(\V\). A \(\V\)-functor \(X\to Y\)
is a function \(\Ob X\to\Ob Y\) with a family of morphisms of \(\V\) indexed
by \(\Ob X\times\Ob X\), so \(\VCat(X,Y)\) is a set.

\begin{lemma}\label{lem:base}
The category \(\V\) is complete, cocomplete, symmetric monoidal and closed, its
internal hom-functor \((-) \ominus a\) being right adjoint to \(a \otimes (-)\), written as \(a \otimes (-) \dashv (-) \ominus a\). Its unit
\(I=0\) is terminal, so \(\V\) is semicartesian, and its tensor is not the
cartesian product.
\end{lemma}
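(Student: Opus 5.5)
The plan is to exploit that \(\V\) is a thin category, a preorder (indeed a total order) on \(\Rp\) with \(a\to b\) exactly when \(a\ge b\), so every structural claim reduces to an order-theoretic statement about \(\Rp\).

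\emph{Completeness and cocompleteness.} In a thin category a limit of a diagram is a meet of its objects and a colimit is a join, with the order reversed here: a cone over a family \((a_i)\) is an object \(c\) with \(c\ge a_i\) for all \(i\), and the universal one is the greatest such lower bound in \(\V\)'s direction, namely \(\sup_i a_i\). Dually the colimit is \(\inf_i a_i\). Both exist in \([0,\infty]\) for arbitrary (small) families, including the empty family, where the limit is \(\sup\varnothing=0\) and the colimit is \(\inf\varnothing=\infty\). Commutativity conditions on cones are vacuous by thinness. This settles both completeness and cocompleteness, and the empty limit being \(0\) already shows that \(I=0\) is terminal: \(a\ge 0\) for every \(a\), so \(\V(a,0)\) is a singleton.

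\emph{Symmetric monoidal structure.} I must check that \(\otimes\) is a functor, which amounts to monotonicity: \(a\ge a'\) and \(b\ge b'\) give \(a+b\ge a'+b'\) in \(\Rp\) with \(x+\infty=\infty\). Associativity, unit and symmetry isomorphisms are identities because \(+\) on \(\Rp\) is associative, commutative and has unit \(0\). The coherence axioms (pentagon, triangle, hexagon) hold because they are equations between parallel morphisms in a thin category.

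\emph{Closedness.} This is the main obstacle, since \(\ominus\) is defined by cases and \(\infty\) must be handled carefully. The adjunction \(a\otimes(-)\dashv(-)\ominus a\) in a thin category is the Galois condition
\[
a+b\ge c \iff b\ge c\ominus a \qquad\text{for all } a,b,c\in\Rp,
\]
with naturality automatic. I will verify this along the three cases of \eqref{eq:ominus}. If \(c\le a\), both sides hold for every \(b\). If \(a<c<\infty\), then \(a<\infty\), so \(c-a\) is a real number and \(a+b\ge c\iff b\ge c-a\); the case \(b=\infty\) makes both sides true. If \(a<c=\infty\), then \(a\) is finite, so \(a+b\ge\infty\iff b=\infty\iff b\ge\infty\). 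Monotonicity of \((-)\ominus a\), needed for functoriality, follows formally from the Galois condition, though it can also be read off the cases directly. By symmetry of \(\otimes\) the adjunction also gives closedness on the other side.

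\emph{Semicartesian, not cartesian.} Terminality of \(I\) was shown above. The categorical product of \(a\) and \(b\) in \(\V\) is their limit, \(\max(a,b)\). Taking \(a=b=1\) gives \(a\otimes b=2\ne1=\max(1,1)\), so \(\otimes\) is not the cartesian product.
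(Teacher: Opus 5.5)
Your proposal is correct and follows the paper's proof essentially step for step. In both, thinness makes every coherence and naturality condition automatic. Limits and colimits are \(\sup\) and \(\inf\), with the empty diagram giving the terminal object \(0\). Closedness is the equivalence \(a+b\ge c\iff b\ge c\ominus a\), checked on the three cases of \eqref{eq:ominus}, with functoriality of \((-)\ominus a\) derived from it. The product is \(\max\), so \(1\otimes 1=2\) while \(1\times 1=1\). The one detail the paper makes explicit is that \(2\neq 1\) means the two objects are not isomorphic, since an isomorphism in \(\V\) would require \(1\ge 2\); your argument uses this implicitly because \(\V\) is a poset.
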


\looseness=-1
A quantitative algebra has an underlying \(\V\)-category, and that
\(\V\)-category is symmetric and separated: the distance function is a metric,
allowed the value \(\infty\)
\cite[Def.~3.1]{MardarePanangadenPlotkin2016QuantitativeAlgebraicReasoning}.
Write \(\Met\) for the full subcategory of \(\VCat\) whose objects are the
\(\V\)-categories \(X\) with \(d(x,y)=d(y,x)\) for all \(x,y\in\Ob X\), and with
\(x=y\) whenever \(d(x,y)=0\). Thus \(\Met(X,Y)=\VCat(X,Y)\), both sets
being the nonexpansive maps \(X\to Y\). Write \(\Vfin\)
for the full subcategory of \(\V\) on \([0,\infty)\subset\Ob\V\). Its enriched
categories are the \([0,\infty)\)-categories of the magnitude literature. Since
\([0,\infty)\) contains \(I=0\) and is closed under \(+\), the category
\(\Vfin\) is a symmetric monoidal subcategory of \(\V\) and is again
semicartesian. It is neither complete nor cocomplete, since \(\sup\mathbb{N}=\infty\) and
\(\inf\varnothing=\infty\) both lie outside \(\Ob\Vfin\). The distinction
matters: the lengths that index everything from \Cref{sec:nerve} on run over
\(\Ob\Vfin\) and not over all of \(\Ob\V\). The magnitude complex of
Leinster and Shulman~\cite[Def.~3.3]{LeinsterShulman2021MagnitudeHomology} is likewise graded by
finite lengths, although their base of enrichment is \([0,\infty]\).

\looseness=-1
Finally, on a \(\V\)-category \(X\) the relation that holds of \(x\) and
\(y\) when \(d(x,y)<\infty\) and \(d(y,x)<\infty\) is an equivalence relation
on \(\Ob X\): reflexive because \(d(x,x)=0\), symmetric by its form, and
transitive by the triangle inequality in both directions. The full
subcategory of \(X\) spanned by any one class of this relation is again a
\(\Vfin\)-category.

\section{Quantitative Equational Theories and Their Monad}\label{sec:theories}
A \emph{signature} is a pair \((\Omega,\ar)\) with \(\Omega\) a finite set of
operation symbols and \(\ar\colon\Omega\to\mathbb{N}\). Put
\(\Omega_n=\ar^{-1}\{n\}\), so \(\Omega=\bigsqcup_{n\in\mathbb{N}}\Omega_n\) with
\(\Omega_n=\varnothing\) for all but finitely many \(n\). Fix a set \(\Xi\) of
variables with \(|\Xi|=\aleph_0\). An \emph{\(\Omega\)-algebra} is a set \(A\)
with a map \(f^A\colon A^n\to A\) for each \(n\in\mathbb{N}\) and each
\(f\in\Omega_n\). An \emph{\(\Omega\)-homomorphism} \(h\colon A\to B\) is a map
with \(h(f^A(a_1,\dots,a_n))=f^B(ha_1,\dots,ha_n)\) for all \(f\in\Omega_n\)
and \(a_1,\dots,a_n\in A\). On a set \(W\) the terms are built in grades, one grade
for each nesting depth of operations, and \(T_\Omega W=\bigcup_{k\ge0}T^{k}_\Omega W\)
collects the grades produced by the recursion, whose base is empty and whose step
adjoins \(W\) together with one further layer of operations applied to the grade
that has already been built,
\begin{equation}\label{eq:terms}
  T^{0}_\Omega W=\varnothing,
  \qquad
  T^{k+1}_\Omega W=W\sqcup\bigsqcup_{n\in\mathbb{N}}\Omega_n\times\bigl(T^{k}_\Omega W\bigr)^{n},
\end{equation}
whose elements are the
\emph{\(\Omega\)-terms over \(W\)}, where \(\sqcup\) and \(\bigsqcup\)
denote disjoint unions and \(\times\) the cartesian product. Write \(\eta_Ww\) for the copy of \(w\in W\)
in a left summand and \(f(t_1,\dots,t_n)\) for the copy of \((f,t_1,\dots,t_n)\)
in a right one. For an \(\Omega\)-algebra \(A\) and a map
\(g\colon W\to A\), let the extension \(g^\sharp\colon T_\Omega W\to A\) of the map \(g\)
be given by the recursion below,
\begin{equation}\label{eq:extension}
  g^\sharp\eta_Ww=gw,
  \qquad
  g^\sharp f(t_1,\dots,t_n)=f^{A}(g^\sharp t_1,\dots,g^\sharp t_n),
\end{equation}
and for a map \(r\colon W\to W'\) of sets put
\(T_\Omega r=(\eta_{W'}\circ r)^\sharp\) and
\(\mu_W=(1_{T_\Omega W})^\sharp\colon T_\Omega T_\Omega W\to T_\Omega W\), the
map that flattens a term of terms. \Cref{lem:term-monad} makes these definitions
well posed:

\begin{lemma}\label{lem:term-monad}
Let \((\Omega,\ar)\) be a signature, let \(W\) and \(W'\) be sets, let \(A\) be
an \(\Omega\)-algebra and let \(g\colon W\to A\) be a map, with \(T_\Omega\),
\(\eta\), \((-)^\sharp\) and \(\mu\) as in \eqref{eq:terms} and
\eqref{eq:extension} above.
\begin{enumerate}
\item\label{itm:tm-grades} The grades \(T^{k}_\Omega W\) increase, every \(\Omega\)-term over \(W\)
  lies in one of them, and \eqref{eq:terms} holds with \(T_\Omega W\) in place of
  both grades. Hence every \(\Omega\)-term over \(W\) is \(\eta_Ww\) for a
  unique \(w\in W\), or \(f(t_1,\dots,t_n)\) for a unique \(n\in\mathbb{N}\), one
  \(f\in\Omega_n\) and one tuple \((t_1,\dots,t_n)\in(T_\Omega W)^{n}\), and never both.
  The map \(\eta_W\colon W\to T_\Omega W\) is injective, and \(T_\Omega W\) is an
  \(\Omega\)-algebra with \(f^{T_\Omega W}(t_1,\dots,t_n)=f(t_1,\dots,t_n)\).
  Each \(t_i\) occurs at a strictly earlier grade than \(f(t_1,\dots,t_n)\), so
  the recursion \eqref{eq:extension} defines \(g^\sharp\) on all of
  \(T_\Omega W\) and determines it uniquely.
\item\label{itm:tm-free} The map \(g^\sharp\) is the unique \(\Omega\)-homomorphism with
  \(g^\sharp\circ\eta_W=g\), that is, the unique filler of
  \[
  \begin{tikzcd}[row sep=1.7em, column sep=3.0em]
    W \arrow[r, "\eta_{W}"] \arrow[dr, "g"'] & T_\Omega W \arrow[d, dashed, "{\exists!\ \Omega\text{-hom }g^\sharp}"]\\
    & A,
  \end{tikzcd}
  \]
  \looseness=-1
  in which only the filler is required to be a homomorphism, so \(T_\Omega W\)
  is free on \(W\).
\item\label{itm:tm-functor} The assignment \(T_\Omega\colon\Set\to\Set\) is a functor, and on a map
  \(r\colon W\to W'\) it renames variables,
  \[
    T_\Omega r\circ\eta_W=\eta_{W'}\circ r,
    \qquad
    T_\Omega r\,f(t_1,\dots,t_n)=f(T_\Omega r\,t_1,\dots,T_\Omega r\,t_n),
  \]
  the first of the two equations being the naturality of the unit \(\eta\) of
  \eqref{eq:terms}.
\item\label{itm:tm-monad} \((T_\Omega,\eta,\mu)\) is a monad on \(\Set\), and
  \((-)^\sharp\) is its Kleisli extension: \(g^\sharp=\mu_{W'}\circ T_\Omega g\)
  for \(A=T_\Omega W'\).
\end{enumerate}
\end{lemma}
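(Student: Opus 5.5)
The plan is to prove the four items in order, each resting on the previous one, with structural induction on grades as the single tool.

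For \Cref{itm:tm-grades}, induct on \(k\) to show \(T^{k}_\Omega W\subseteq T^{k+1}_\Omega W\). The base case holds because \(T^0_\Omega W=\varnothing\). For the step, if \(T^{k-1}_\Omega W\subseteq T^{k}_\Omega W\), then the right-hand side of \eqref{eq:terms} is monotone in the grade it is applied to, so \(T^{k}_\Omega W\subseteq T^{k+1}_\Omega W\). Here the disjoint unions are read as tagged unions, so that an element \(\eta_Ww\) or \(f(t_1,\dots,t_n)\) keeps its identity across grades. Every term lies in some grade by the definition of \(T_\Omega W\) as a union. To get \eqref{eq:terms} with \(T_\Omega W\) in both places, I would argue two inclusions:
\begin{itemize}
\item a tuple \((t_1,\dots,t_n)\in(T_\Omega W)^n\) has finitely many entries, so it lies in \((T^{k}_\Omega W)^n\) for the largest grade needed, since the grades increase;
\item conversely, each grade is contained in the union.
\end{itemize}
Uniqueness and exclusivity of the decomposition then come from the tags of the disjoint union. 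Injectivity of \(\eta_W\) is injectivity of a coproduct inclusion. The algebra structure is read off directly. For the recursion \eqref{eq:extension}, define \(g^\sharp\) on \(T^{k}_\Omega W\) by induction on \(k\), and check that the definitions agree on overlaps, again by induction. Uniqueness follows because any two solutions agree on each grade.

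For \Cref{itm:tm-free}, the second clause of \eqref{eq:extension} is exactly the homomorphism condition for \(g^\sharp\), and the first clause gives \(g^\sharp\eta_W=g\). Conversely, any homomorphism \(h\) with \(h\eta_W=g\) satisfies both equations of \eqref{eq:extension}, so it equals \(g^\sharp\) by the uniqueness in \Cref{itm:tm-grades}.

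For \Cref{itm:tm-functor}, the two displayed equations are instances of \eqref{eq:extension} with \(A=T_\Omega W'\), using the algebra structure from \Cref{itm:tm-grades}. Functoriality follows from uniqueness in \Cref{itm:tm-free}:
\begin{itemize}
\item \(T_\Omega 1_W\) and \(1_{T_\Omega W}\) are both homomorphisms extending \(\eta_W\);
\item \(T_\Omega s\circ T_\Omega r\) and \(T_\Omega(sr)\) are both homomorphisms extending \(\eta_{W''}sr\), since a composite of homomorphisms is a homomorphism.
\end{itemize}

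For \Cref{itm:tm-monad}, I would use the same uniqueness trick throughout.
\begin{itemize}
\item The Kleisli identity \(g^\sharp=\mu_{W'}\circ T_\Omega g\) holds because both sides are homomorphisms extending \(g\). For the right-hand side, \(\mu_{W'}\,T_\Omega g\,\eta_W=\mu_{W'}\eta_{T_\Omega W'}g=g\).
\item Naturality of \(\mu\) holds because both composites are homomorphisms extending \(T_\Omega r\) along \(\eta_{T_\Omega W}\).
\item For the unit laws, \(\mu\eta_{T_\Omega W}=1\) holds by definition, and \(\mu\circ T_\Omega\eta_W\) and \(1\) both extend \(\eta_W\).
\item For associativity, \(\mu\circ\mu_{T_\Omega W}\) and \(\mu\circ T_\Omega\mu\) are both homomorphisms \(T_\Omega T_\Omega T_\Omega W\to T_\Omega W\) extending \(\mu\) along \(\eta_{T_\Omega T_\Omega W}\).
\end{itemize}

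The main obstacle is \Cref{itm:tm-grades}: making the identification of elements across grades rigorous so that the union is meaningful. I would first fix \(T_\Omega W\) as the least set closed under the two constructors, realized inside a fixed universe of tagged tuples. The grades are then subsets of this set, and monotonicity and the fixed-point equation become routine.
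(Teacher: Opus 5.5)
Your proposal is correct and follows the paper's proof step for step. The paper also realises the disjoint unions as tagged unions $(\{0\}\times X)\cup(\{1\}\times Y)$, so that the grades increase by monotonicity, and proves the fixed-point equation by noting that a finite tuple lies in a single grade; it then derives freeness, functoriality, the monad laws and the Kleisli identity from the uniqueness of homomorphic extensions along $\eta$. The only difference is cosmetic: you build $g^\sharp$ as a compatible family of maps on the grades, whereas the paper recurses on the least grade of a term.
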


Take the set \(W=\Xi\) and regard \(\Xi\subseteq T_\Omega\Xi\) along \(\eta_\Xi\). A
\emph{substitution} is a map \(\sigma\colon\Xi\to T_\Omega\Xi\), an endomorphism
of \(\Xi\) in the Kleisli category of \(T_\Omega\). It acts on terms as its
Kleisli extension \(\sigma^\sharp\colon T_\Omega\Xi\to T_\Omega\Xi\), and on
subsets of \(T_\Omega\Xi\) elementwise.
The three sets of syntactic data are
\[
\mathcal{E}=T_\Omega\Xi\times\mathbb{Q}_{\ge 0}\times T_\Omega\Xi,\qquad
\mathcal{I}=\Pfin(\mathcal{E})\times\mathcal{E},\qquad
\mathcal{I}_{\mathrm{b}}=\Pfin(\Xi\times\mathbb{Q}_{\ge 0}\times\Xi)
  \times\mathcal{E},
\]
where \(\Pfin\) denotes the finite powerset. Elements of \(\mathcal{E}\) are
\emph{quantitative equations}, and \(t=_\varepsilon s\) abbreviates the triple
\((t,\varepsilon,s) \in \mathcal{E}\). Elements of \(\mathcal{I}\) are \emph{quantitative
inferences}, and \(\Gamma\vdash\varphi\) abbreviates the pair
\((\Gamma,\varphi)\in\mathcal{I}\). Both abbreviations are syntax, not relations. Since
\(\Xi\subseteq T_\Omega\Xi\), the \emph{basic} inferences form a subset
\(\mathcal{I}_{\mathrm{b}}\subseteq\mathcal{I}\): they are those whose
hypotheses relate variables only, and no compound terms
\cite[\S2]{MardarePanangadenPlotkin2016QuantitativeAlgebraicReasoning}.

\begin{definition}\label{def:deduction}
A \emph{deducibility relation} is a relation
\({\vdash}\subseteq 2^{\mathcal{E}}\times\mathcal{E}\) closed under the rules
below, stated for all \(n\in\mathbb{N}\), \(f\in\Omega_n\),
\(t,s,u,t_i,s_i\in T_\Omega\Xi\),
\(\varepsilon,\varepsilon'\in\mathbb{Q}_{\ge 0}\),
\(\varphi,\psi\in\mathcal{E}\) and \(\Gamma,\Gamma'\subseteq\mathcal{E}\). The
hypothesis sets are arbitrary subsets of \(\mathcal{E}\), and are not required to
be finite:
\begin{description}
\item[Reflexivity:] \(\varnothing\vdash t=_0 t\).
\item[Symmetry:] \(\{t=_\varepsilon s\}\vdash s=_\varepsilon t\).
\item[Triangle inequality:] \(\{t=_\varepsilon s,\;s=_{\varepsilon'}u\}
  \vdash t=_{\varepsilon+\varepsilon'}u\).
\item[Monotonicity:] \(\{t=_\varepsilon s\}\vdash t=_{\varepsilon+\varepsilon'}s\) for
  \(\varepsilon'>0\).
\item[Archimedean:] \(\{t=_{\varepsilon'}s\mid\varepsilon'>\varepsilon\}
  \vdash t=_\varepsilon s\).
\item[Nonexpansiveness:] \(\{t_i=_\varepsilon s_i\mid 1\le i\le n\}\vdash
  f(t_1,\dots,t_n)=_\varepsilon f(s_1,\dots,s_n)\).
\item[Substitution:] if \(\Gamma\vdash t=_\varepsilon s\) then
  \(\sigma^\sharp\Gamma\vdash\sigma^\sharp t=_\varepsilon\sigma^\sharp s\) for
  every substitution \(\sigma\).
\item[Cut:] if \(\Gamma\vdash\varphi\) for every \(\varphi\in\Gamma'\) and
  \(\Gamma'\vdash\psi\), then \(\Gamma\vdash\psi\).
\item[Assumption:] if \(\varphi\in\Gamma\) then \(\Gamma\vdash\varphi\).
\end{description}
\end{definition}

\looseness=-1
Every rule of \Cref{def:deduction} is a Horn clause over the membership
predicate of \({\vdash}\): finitely or infinitely many memberships as
premisses and one membership as conclusion, the clause form of
Horn~\cite{Horn1951DirectUnions}, taken infinitary as in
Jurka, Milius and Urbat~\cite[Def.~3.2]{JurkaMiliusUrbat2024AlgebraicReasoningRelationalStructures}.
Horn clauses survive arbitrary intersection: let
\(({\vdash_i})_{i\in I}\) be deducibility relations and
\({\vdash}=\bigcap_{i\in I}{\vdash_i}\). A rule asserting an element
unconditionally puts it in each \({\vdash_i}\), hence in \({\vdash}\), and a
rule asserting an element from hypotheses lying in \({\vdash}\) finds them in
each \({\vdash_i}\), so its conclusion lies in \({\vdash}\). Moreover
\(2^{\mathcal{E}}\times\mathcal{E}\) is a deducibility relation, every rule
holding because every element is a member, so for
\(S\subseteq\mathcal{I}_{\mathrm{b}}\) the deducibility relations containing
\(S\) form a nonempty family, and their intersection \({\vdash_S}\) is again
one, contains \(S\), lies inside each member, and is the unique least one. The \emph{quantitative equational theory induced by} \(S\) is
\(U=({\vdash_S})\cap\mathcal{I}\), and the elements of \(S\) are its \emph{axioms}
\cite[Def.~2.2]{MardarePanangadenPlotkin2016QuantitativeAlgebraicReasoning}.
Two operations act here and both are needed: closure passes from \(S\) to
\({\vdash_S}\), and intersection with \(\mathcal{I}\) then discards the pairs
whose hypothesis set is infinite. Such pairs occur, since the Archimedean rule
places the pair with the infinite hypothesis set
\(\{t=_{\varepsilon'}s\mid\varepsilon'>\varepsilon\}\) in every deducibility
relation. Derivations may therefore pass through pairs outside \(U\), while
\(S\) and \(U\) hold finite hypothesis sets only
\cite[\S2]{MardarePanangadenPlotkin2016QuantitativeAlgebraicReasoning}. The
index \(\varepsilon\) is rational, and the distances it induces are real, as
\Cref{thm:free} shows.

\begin{definition}\label{def:qalg}
A \emph{quantitative algebra} is a triple \((A,\Omega^A,d^A)\) in which
\(\Omega^A=(f^A)_{f\in\Omega}\) is a family of maps
\(f^A\colon A^{\ar f}\to A\), one for each operation symbol, making
\((A,\Omega^A)\) an \(\Omega\)-algebra, and \(d^A\colon A\times A\to\Rp\) is
an extended metric on \(A\) making every operation nonexpansive: for \(f\in\Omega_n\) and \(a_i,b_i\in A\),
if \(d^A(a_i,b_i)\le\varepsilon\) for \(1\le i\le n\) then
\(d^A(f^A(a_1,\dots,a_n),f^A(b_1,\dots,b_n))\le\varepsilon\)
\cite[Def.~3.1]{MardarePanangadenPlotkin2016QuantitativeAlgebraicReasoning}. An
\emph{assignment} is a map \(\iota\colon\Xi\to A\), and it interprets terms
along \(\iota^\sharp\colon T_\Omega\Xi\to A\). The algebra \emph{satisfies}
\((\Gamma,t=_\varepsilon s)\in\mathcal{I}\) if every \(\iota\) with
\(d^A(\iota^\sharp t',\iota^\sharp s')\le\varepsilon'\) for all
\((t'=_{\varepsilon'}s')\in\Gamma\) also has
\(d^A(\iota^\sharp t,\iota^\sharp s)\le\varepsilon\), and we then write
\(A\models\varphi\)
\cite[Def.~4.1]{MardarePanangadenPlotkin2016QuantitativeAlgebraicReasoning}.
Let \(\QAlg\) be the category with the quantitative algebras over
\((\Omega,\ar)\) as objects and, as morphisms \(h\colon A\to B\), the
\(\Omega\)-homomorphisms with \(d^B(ha,ha')\le d^A(a,a')\) for all
\(a,a'\in A\). For a quantitative equational theory \(U\) put \cite[Def.~4.2]{MardarePanangadenPlotkin2016QuantitativeAlgebraicReasoning}:
\[
\Ob\Mod(U)=\{A\in\Ob\QAlg\mid A\models\varphi\ \text{for all}\ \varphi\in U\},
\qquad \Mod(U)(A,B)=\QAlg(A,B).
\]
\end{definition}

\looseness=-1
Satisfaction is well defined: each assignment \(\iota\) has a unique
extension \(\iota^\sharp\), so both compared bounds are determined by \(\iota\)
alone, and the quantifier ranges over the set \(A^\Xi\). Composites and
identities of such morphisms are again morphisms, so \(\QAlg\) is a category and
\(\Mod(U)\) is the full subcategory of it cut out by the axioms of \(U\).

\begin{theorem}\label{thm:free}
Let \((\Omega,\ar)\) be a signature, let \(\Xi\) be as in \Cref{def:deduction},
and let \(U\) be the quantitative equational theory induced by a set
\(S\subseteq\mathcal{I}_{\mathrm{b}}\) of basic inferences in the sense of that
definition.
\begin{enumerate}
\item\label{itm:free-metric} Define \(d_U\colon T_\Omega\Xi\times T_\Omega\Xi\to\Rp\) by
  \(d_U(t,s)=\inf\{\varepsilon\mid(\varnothing,t=_\varepsilon s)\in U\}\), with
  \(\inf\varnothing=\infty\). Then \(d_U\) is an extended pseudometric on
  \(T_\Omega\Xi\), the relation \(d_U=0\) is an \(\Omega\)-congruence, and the
  quotient by that relation lies in \(\Mod(U)\)
  \cite[\S5, Thm.~5.1]{MardarePanangadenPlotkin2016QuantitativeAlgebraicReasoning}.
\item\label{itm:free-sound} The rules of \Cref{def:deduction} are sound and complete for \(\Mod(U)\)
  \cite[Thm.~5.2]{MardarePanangadenPlotkin2016QuantitativeAlgebraicReasoning}.
\item \label{itm:forgetfulfunctor} The forgetful functor \(G_U\colon\Mod(U)\to\Met\) has a left adjoint
  \(\Free_U\colon\Met\to\Mod(U)\)
  \cite[Thm.~6.1]{MardarePanangadenPlotkin2016QuantitativeAlgebraicReasoning}
  and is monadic
  \cite[Cor.~4.6, Prop.~4.7]{MardareGhaniRischel2025MetricEquationalTheories}.
  The induced monad \(T_U=G_U\Free_U\colon\Met\to\Met\) is the \emph{metric term
  monad} of \(U\)
  \cite[\S6]{MardarePanangadenPlotkin2016QuantitativeAlgebraicReasoning}, and
  \(\Mod(U)\) is equivalent to the category of \(T_U\)-algebras over \(\Met\).
\end{enumerate}
\end{theorem}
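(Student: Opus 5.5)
The theorem collects three known results, so the plan is to check in order that each cited result applies verbatim to the present setup of \Cref{def:deduction,def:qalg}. Where the hypotheses differ only cosmetically, I will reduce the statement to the cited one. The dependencies run \Cref{itm:free-metric} \(\Rightarrow\) \Cref{itm:free-sound} \(\Rightarrow\) \Cref{itm:forgetfulfunctor}.

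\textbf{Item \Cref{itm:free-metric}.} I will derive the properties of \(d_U\) directly from the rules.
\begin{itemize}
\item Reflexivity gives \(d_U(t,t)=0\).
\item Symmetry makes the defining set of \(d_U(t,s)\) equal to that of \(d_U(s,t)\).
\item For the triangle inequality, suppose \(\varepsilon\) is admissible for \((t,s)\) and \(\varepsilon'\) for \((s,u)\). Applying Cut with \(\Gamma=\varnothing\) to the triangle rule shows \(\varepsilon+\varepsilon'\) is admissible for \((t,u)\). Taking infima gives \(d_U(t,u)\le d_U(t,s)+d_U(s,u)\).
\item By the Archimedean rule and Cut, the infimum is attained whenever it is rational. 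In general the admissible set is upward closed by Monotonicity, so it is \([d_U,\infty)\cap\mathbb{Q}_{\ge0}\) or \((d_U,\infty)\cap\mathbb{Q}_{\ge0}\).
\item The relation \(d_U=0\) means \(t=_\varepsilon s\) is derivable from \(\varnothing\) for every rational \(\varepsilon>0\). Archimedean then gives \(t=_0 s\). So it is an equivalence relation by the same three rules.
\item It is a congruence by Nonexpansiveness combined with Cut. The same argument shows every operation is nonexpansive for \(d_U\), and hence for the induced metric on the quotient.
\end{itemize}

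\textbf{Satisfaction and item \Cref{itm:free-sound}.} Showing the quotient satisfies every \((\Gamma,\varphi)\in U\) is the substantive part. Given an assignment \(\iota\) into the quotient, I will lift it to a substitution \(\sigma\) by choosing representatives. The Substitution rule transports \(\Gamma\vdash\varphi\) to \(\sigma^\sharp\Gamma\vdash\sigma^\sharp\varphi\).

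Suppose the hypotheses hold at \(\iota\), i.e.\ \(d_U\le\varepsilon'\) on each pair in \(\sigma^\sharp\Gamma\). I need each equation of \(\sigma^\sharp\Gamma\) derivable from \(\varnothing\). When \(d_U<\varepsilon'\) this holds outright. The boundary case \(d_U=\varepsilon'\) with \(\varepsilon'\) rational is handled by the attainment shown in item \Cref{itm:free-metric}. Cut then yields \(\varnothing\vdash\sigma^\sharp\varphi\).

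Soundness of the rules for \(\Mod(U)\) is a rule-by-rule check. It uses the fact that the semantic consequence relation is itself a deducibility relation containing \(S\), so it contains \(\vdash_S\). Completeness follows because the quotient model separates exactly the non-derivable equations. This is \cite[Thm.~5.2]{MardarePanangadenPlotkin2016QuantitativeAlgebraicReasoning}, whose proof I will follow.

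\textbf{Item \Cref{itm:forgetfulfunctor}.} This is the main obstacle, since the free algebra over a metric space \(A\), rather than over the set \(\Xi\), is not given by the term construction alone. I would build \(\Free_U A\) in three steps:
\begin{enumerate}
\item Form terms over the set \(A\).
\item Adjoin the basic inferences of \(S\) instantiated at \(A\), together with the diagram equations \(a=_{\varepsilon}b\) for every rational \(\varepsilon\ge d^A(a,b)\).
\item Take the least pseudometric closed under the rules, and quotient by \(d=0\) as in item \Cref{itm:free-metric}.
\end{enumerate}
The universal property follows by extending a nonexpansive \(A\to GB\) via \((-)^\sharp\) from \Cref{lem:term-monad}\Cref{itm:tm-free}. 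One must check that this extension is nonexpansive. That holds because the pullback of \(d^B\) is a deducibility-closed pseudometric, which is precisely where basicness of \(S\) is used.

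Monadicity I will take from \cite[Cor.~4.6, Prop.~4.7]{MardareGhaniRischel2025MetricEquationalTheories}, checking via Beck's theorem that \(G_U\) creates coequalisers of \(G_U\)-split pairs. The equivalence of \(\Mod(U)\) with the \(T_U\)-algebras is then the comparison functor.
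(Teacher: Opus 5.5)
Your proposal is correct in substance, but it takes a different and more self-contained route than the paper. The paper gives no proof of \Cref{thm:free}: every part is discharged by citation. The only argument it offers is the provenance remark after the statement, that monadicity is proved for metric equational theories with metric arities and transfers because discrete arities yield a quantitative equational theory with the same models.

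You instead derive \Cref{itm:free-metric} from the rules of \Cref{def:deduction}. The attainment of rational infima via Monotonicity, Archimedean and Cut, together with the lifting of an assignment to a substitution, is exactly what makes the quotient a model of all of \(U\). You then read completeness off that term model, build \(\Free_UA\) by hand, and propose to verify Beck's condition directly instead of transferring across arities. Your route shows where each hypothesis is spent. The paper's route is shorter but rests part \Cref{itm:forgetfulfunctor} entirely on the transfer remark.

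Two points need correcting before you write it out.

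\textbf{The order of the pseudometric.} ``The least pseudometric closed under the rules'' has the order reversed. The diagram, nonexpansiveness and the instances of \(S\) are all upper bounds on distances, so the least pseudometric meeting them is zero. Comparing that with the pullback \(d'(t,s)=d^B(h^\sharp t,h^\sharp s)\) would give the wrong inequality. Take instead the greatest such pseudometric \(d_F\), the pointwise supremum of all of them, which meets the same conditions. Equivalently, read the distance off the least closed set of judgements. Then \(d'\le d_F\) is exactly the nonexpansiveness of \(h^\sharp\).

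\textbf{Where basicness is used.} Basicness is not what makes \(h^\sharp\) nonexpansive. For every \((\Gamma,\varphi)\in U\) and every substitution \(\sigma\colon\Xi\to T_\Omega A\), the map \(h^\sharp\circ\sigma\) is an assignment into \(B\). So \(d'\) respects every instance, basic or not, just as your argument for \Cref{itm:free-metric} needs no basicness. Basicness is spent in the Beck step instead. On a \(G_U\)-split coequaliser \(e\colon G_UB'\to C\) with section \(s\), put \(f^C(c_1,\dots,c_n)=e\,f^{B'}(sc_1,\dots,sc_n)\). An assignment \(\iota\) into \(C\) lifts to \(s\circ\iota\) into \(B'\), which keeps every bound between variables because \(s\) is nonexpansive. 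The identity \(e\circ(s\circ\iota)^\sharp=\iota^\sharp\) then carries the conclusion back to \(C\). A hypothesis on compound terms, by contrast, need not survive the lift, since \(s\) is not a homomorphism.
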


\looseness=-1
Part \Cref{itm:free-metric} reads the free algebra as \(\Omega\)-terms identified when
the axioms derive distance \(0\), at the distance \(d_U(t,s)\) that is the least
cost at which \(U\) derives \(t=_\varepsilon s\). More axioms mean more
derivations, so they only shorten distances or identify terms.

\looseness=-1
Part \Cref{itm:forgetfulfunctor} needs a word on provenance. Mardare,
Panangaden and Plotkin construct the left adjoint and name the monad, but prove
no comparison with the \(T_U\)-algebras. Monadicity is stated for metric
equational theories, whose operations carry metric arities, and a signature with
discrete arities yields a quantitative equational theory with the same notion of
model \cite[\S8]{MardareGhaniRischel2025MetricEquationalTheories}, so the
statement transfers. Finiteness of \(\Omega\) and \(|\Xi|=\aleph_0\) serve the
computations below, not the structure theory: the objects of \Cref{thm:free}
survive arbitrary arities, with a variety theorem
\cite[Ex.~3.2(3), Thm.~5.3]{MiliusUrbat2019EquationalAxiomatization}, also over
relational structures
\cite[Ex.~4.19]{JurkaMiliusUrbat2024AlgebraicReasoningRelationalStructures},
present \(\omega_1\)-accessible monads over the \(1\)-bounded metric spaces
\cite[Ex.~3.5, Rem.~4.20]{FordMiliusSchroder2021MonadsRelationalStructures}, and
the monads on \(\Met\) presented by a variety of quantitative algebras
\cite{Adamek2022VarietiesQuantitativeAlgebrasMonads,AdamekDostalVelebil2023StronglyFinitaryMonads,Rosicky2021MetricMonads}
are the \(1\)-basic ones
\cite[Thm.~40]{Adamek2026OneBasicMonads}.

\section{The Length Nerve and Its Two Homologies}\label{sec:nerve}
Both homology theories come from one simplicial object, filtered by length.

Write \(\Delta\) for the category whose objects are the finite ordinals
\([n]=\{0<1<\dots<n\}\), \(n\in\mathbb{N}\), and whose morphisms are the
order-preserving maps. For \(n\ge 1\) and \(0\le i\le n\) the \emph{coface}
\(\delta^i\colon[n-1]\to[n]\), and for \(n\ge 0\) and \(0\le i\le n\) the
\emph{codegeneracy} \(\sigma^i\colon[n+1]\to[n]\), are
\begin{equation}\label{eq:cofaces}
  \delta^i(k)=\begin{cases}k,&k<i,\\ k+1,&k\ge i,\end{cases}
  \qquad
  \sigma^i(k)=\begin{cases}k,&k\le i,\\ k-1,&k>i,\end{cases}
\end{equation}
so \(\delta^i\) omits the value \(i\) and \(\sigma^i\) takes it twice. A
\emph{simplicial set} is a functor \(K\colon\Delta^{\op}\to\Set\) and a
\emph{simplicial map} a natural transformation between two such. They form the
category \(\SSet\). Write \(K_n=K[n]\),
\(d_i=K(\delta^i)\colon K_n\to K_{n-1}\) and
\(s_i=K(\sigma^i)\colon K_n\to K_{n+1}\) for \(0\le i\le n\). These sets and
operators determine \(K\), and they obey the simplicial identities, the
relations that \eqref{eq:cofaces} satisfies in \(\Delta\)
\cite[Defs.~1.1 and 2.1]{May1967SimplicialObjects}. A
\emph{simplicial subset} of \(K\) is a subfunctor, that is, a family of subsets
\(L_n\subseteq K_n\), one for every \(n\), such that the inclusion
\(K(\alpha)L_n\subseteq L_m\) holds for every morphism
\(\alpha\colon[m]\to[n]\) of \(\Delta\).

Let \(X\) be an object of \(\Met\), and write
\(\lvert-\rvert\colon\Delta\to\Set\) for the functor that forgets the order and
retains only the underlying set. The \emph{length nerve} of \(X\) is the
composite
\begin{align}\label{eq:nerve}
  &\Nv(X)\colon\Delta^{\op}\xrightarrow{\;\lvert-\rvert^{\op}\;}\Set^{\op}
    \xrightarrow{\;\Set(-,\Ob X)\;}\Set,
  \\[0.35em]\nonumber
  &\hspace{2.4em}
  \begin{tikzcd}[row sep=2.1em, column sep=2.8em, ampersand replacement=\&]
    {[m]} \arrow[r, maps to] \arrow[d, "\alpha"']
      \& {\Set\bigl([m],\Ob X\bigr)}\\
    {[n]} \arrow[r, maps to]
      \& {\Set\bigl([n],\Ob X\bigr)}.
        \arrow[u, "{\substack{\Nv(X)(\alpha)\\[0.2em]
             \dnote{\mathbf{x}\,\longmapsto\,\mathbf{x}\circ\alpha}}}"']
  \end{tikzcd}
\end{align}
A composite of functors is a functor, so \(\Nv(X)\) is a simplicial set with
\(\Nv(X)_n=\Set\bigl([n],\Ob X\bigr)\). An \(n\)-simplex is thus a map
\(\mathbf{x}\colon[n]\to\Ob X\), written
\(\mathbf{x}=\langle x_0,\dots,x_n\rangle\) with \(x_i=\mathbf{x}(i)\).
Substituting the cofaces and codegeneracies \eqref{eq:cofaces} into
\eqref{eq:nerve} gives
\[
  (d_i\mathbf{x})_k=\begin{cases}x_k,&k<i,\\ x_{k+1},&k\ge i,\end{cases}
  \qquad
  (s_i\mathbf{x})_k=\begin{cases}x_k,&k\le i,\\ x_{k-1},&k>i,\end{cases}
\]
that is, the operator \(d_i\) deletes the entry \(x_i\) from the tuple, and
\(s_i\) repeats it,
\[
  d_i\mathbf{x}=\langle x_0,\dots,x_{i-1},x_{i+1},\dots,x_n\rangle,
  \qquad
  s_i\mathbf{x}=\langle x_0,\dots,x_{i-1},x_i,x_i,x_{i+1},\dots,x_n\rangle .
\]
The \emph{length} of an \(n\)-simplex \(\mathbf{x}\) and the \emph{sublevel
sets} of the length function are
\begin{equation}\label{eq:length}
  \lambda\mathbf{x}=\sum_{i=1}^{n}d(x_{i-1},x_i)\in\Rp,
  \qquad
  \bigl(\Nv(X)^{\le\ell}\bigr)_n=\bigl\{\mathbf{x}\in\Nv(X)_n\bigm|\lambda\mathbf{x}\le\ell\bigr\},
\end{equation}
the empty sum being \(0\), so that \(\lambda\) vanishes on \(\Nv(X)_0\). The
value \(\infty\) is allowed, since distances in \(\Met\) are, while the
parameter \(\ell\) runs over the finite values \([0,\infty)\). These
are the enriched nerve and the length of
Otter~\cite[Defs.~4.1 and 4.2]{Otter2022MagnitudeMeetsPersistence}, whose filtered
simplicial set is the assignment \(\ell\mapsto\Nv(X)^{\le\ell}\).
\Cref{lem:filtration} shows that each \(\Nv(X)^{\le\ell}\) is a simplicial
subset of \(\Nv(X)\) and that the assignment \(\Nv(-)^{\le-}\) is functorial in
\(\ell\) and in the space \(X\).

\begin{lemma}\label{lem:filtration}
Let \(F\colon X\to Y\) be nonexpansive, with \(X\) and \(Y\) objects of
\(\Met\).
\begin{enumerate}
\item\label{itm:len-ops-stmt} For every \(n\ge 0\), every
  \(\mathbf{x}\in\Nv(X)_n\) and every \(0\le i\le n\), the degeneracy preserves
  the length, \(\lambda(s_i\mathbf{x})=\lambda\mathbf{x}\), and for \(n\ge 1\)
  the face does not raise it, so that \(\lambda(d_i\mathbf{x})\le\lambda\mathbf{x}\).
\item\label{itm:filt-functor} Each \(\Nv(X)^{\le\ell}\) is a simplicial subset of \(\Nv(X)\), and
  \(\ell\le\ell'\) gives \(\Nv(X)^{\le\ell}\subseteq\Nv(X)^{\le\ell'}\), so
  \(\Nv(X)^{\le-}\colon\Vfin^{\op}\to\SSet\) is a functor, which is the
  filtration of \(\Nv(X)\) by length.
\item\label{itm:filt-map} \(\Nv(F)\colon\Nv(X)\to\Nv(Y)\), \(\langle x_0,\dots,x_n\rangle\mapsto
  \langle fx_0,\dots,fx_n\rangle\), is a simplicial map satisfying
  \(\lambda\circ\Nv(F)\le\lambda\), hence restricts to
  \(\Nv(F)^{\le\ell}\colon\Nv(X)^{\le\ell}\to\Nv(Y)^{\le\ell}\), naturally
  in \(\ell\).
\item\label{itm:filt-nerve} For \(G\colon Y\to Z\) in \(\Met\), \(\Nv(1_X)=1_{\Nv(X)}\) and
  \(\Nv(GF)=\Nv(G)\circ\Nv(F)\), so \(\Nv(-)^{\le-}\colon\Met\to\SSet^{\Vfin^{\op}}\)
  is a functor.
\end{enumerate}
\end{lemma}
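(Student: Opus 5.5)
The plan is to prove the four parts in order, each resting on the one before it. The key tool is the explicit description of the face and degeneracy operators on \(\Nv(X)\) from \eqref{eq:nerve}, together with the triangle inequality and nonexpansiveness of \(F\).

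For \Cref{itm:len-ops-stmt} I argue directly on the tuple \(\langle x_0,\dots,x_n\rangle\).

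The degeneracy \(s_i\) inserts a repeated entry. The sum \eqref{eq:length} for \(s_i\mathbf{x}\) therefore consists of the terms of \(\lambda\mathbf{x}\) plus one extra term \(d(x_i,x_i)=0\). This term vanishes because \(X\) is a \(\V\)-category, as shown in \Cref{sec:base}.

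For the face \(d_i\) I split into three cases.
\begin{itemize}
\item If \(i=0\), the summand \(d(x_0,x_1)\) is dropped.
\item If \(i=n\), the summand \(d(x_{n-1},x_n)\) is dropped.
\item If \(0<i<n\), the two summands \(d(x_{i-1},x_i)+d(x_i,x_{i+1})\) are replaced by \(d(x_{i-1},x_{i+1})\). This is no larger by the triangle inequality.
\end{itemize}
Since every summand lies in \(\Rp\), the inequality holds even when some distance equals \(\infty\). Arithmetic in \(\Rp\) with \(\infty\) absorbing and the order preserved by \(+\) makes all three cases valid without separate treatment.

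For \Cref{itm:filt-functor} I must check \(\Nv(X)(\alpha)L_n\subseteq L_m\) for every morphism \(\alpha\) of \(\Delta\), where \(L_n=(\Nv(X)^{\le\ell})_n\).

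This is the one step needing care, since the lemma only controls the generators. I will reduce to faces and degeneracies using the standard epi–mono factorisation. Every morphism of \(\Delta\) is a composite of cofaces and codegeneracies \cite{May1967SimplicialObjects}, so \(\Nv(X)(\alpha)\) is a composite of operators \(d_i\) and \(s_i\). By \Cref{itm:len-ops-stmt} none of these raises \(\lambda\), so each preserves the condition \(\lambda\le\ell\).

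As a cleaner alternative that avoids the factorisation, I can prove \(\lambda(\mathbf{x}\circ\alpha)\le\lambda\mathbf{x}\) directly. For monotone \(\alpha\), each step \(d(x_{\alpha(k-1)},x_{\alpha(k)})\) is bounded by the triangle inequality by the sum of the consecutive steps from \(\alpha(k-1)\) to \(\alpha(k)\). These index ranges are disjoint across \(k\), or empty when \(\alpha(k-1)=\alpha(k)\). I will probably present this direct argument and mention the factorisation as an aside.

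Monotonicity in \(\ell\) is immediate from the definition. Since \(\Vfin\) is thin, functoriality reduces to the facts that inclusions compose and that identity inclusions are identities.

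For \Cref{itm:filt-map}, \(\Nv(F)\) is whiskering by \(\Set(-,f)\), that is, postcomposition \(\mathbf{x}\mapsto f\circ\mathbf{x}\). This commutes with precomposition by \(\alpha\), so \(\Nv(F)\) is natural, i.e. a simplicial map.

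Nonexpansiveness gives \(d(fx_{i-1},fx_i)\le d(x_{i-1},x_i)\) termwise, and summing yields \(\lambda\circ\Nv(F)\le\lambda\). Hence \(\Nv(F)\) restricts to each sublevel set. Naturality in \(\ell\) holds because every map involved is a restriction of the single map \(\Nv(F)\) along inclusions.

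For \Cref{itm:filt-nerve}, identities and composites are preserved because postcomposition is functorial: \(1\circ\mathbf{x}=\mathbf{x}\) and \((gf)\circ\mathbf{x}=g\circ(f\circ\mathbf{x})\). These equations persist under restriction to the sublevel sets, which gives functoriality into \(\SSet^{\Vfin^{\op}}\).

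I expect no real obstacle. The only point needing attention is that closure of the sublevel sets must hold for all \(\alpha\), not just the generators, and the direct telescoping bound above handles this.
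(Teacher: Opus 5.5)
Your proposal is correct and follows the paper's proof essentially step by step. The paper uses the same ingredients: the explicit face and degeneracy computations (triangle inequality for inner faces, \(d(x_i,x_i)=0\) for degeneracies), and the telescoping bound \(\lambda(\mathbf{x}\circ\alpha)\le\lambda\mathbf{x}\) over the disjoint index blocks \(\{\alpha(j-1)+1,\dots,\alpha(j)\}\) to get closure of the sublevel sets under every morphism of \(\Delta\). It likewise uses postcomposition commuting with precomposition for simpliciality, and termwise nonexpansiveness for \(\lambda\circ\Nv(F)\le\lambda\). The only cosmetic difference is that the paper proves the reindexing bound inside the first part and specialises it, while you prove the first part directly and invoke the bound in the second.
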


Write \(\mathbb{Z}[\Nv(X)_n]\) for the free abelian group on the \(n\)-simplices
of the length nerve. The \emph{normalised chain complex} \(C_\bullet(X)\) is the
quotient by the subgroup on the degenerate tuples,
\begin{equation}\label{eq:normalised}
  D_n(X)=\bigl\langle s_i\mathbf{y}\bigm|
    \mathbf{y}\in\Nv(X)_{n-1},\ 0\le i\le n-1\bigr\rangle_{\mathbb{Z}},
  \qquad
  C_n(X)=\mathbb{Z}[\Nv(X)_n]\big/D_n(X),
\end{equation}
with \(D_0(X)=0\) and \(\partial=\sum_{i=0}^{n}(-1)^id_i\). The subgroups
\(D_\bullet(X)\) form a subcomplex, since for every \(n\ge 1\), every
\(\mathbf{y}\in\Nv(X)_{n-1}\) and every \(0\le i\le n-1\) the simplicial
identities give
\begin{equation}\label{eq:degenerate}
  \partial s_i\mathbf{y}
    =\sum_{j=0}^{i-1}(-1)^js_{i-1}d_j\mathbf{y}
     +(-1)^i\mathbf{y}+(-1)^{i+1}\mathbf{y}
     +\sum_{j=i+2}^{n}(-1)^js_id_{j-1}\mathbf{y},
\end{equation}
where the two middle terms cancel and every remaining term is degenerate, so
\(\partial D_n(X)\subseteq D_{n-1}(X)\). As \(D_n(X)\) is spanned by a subset of
the basis \(\Nv(X)_n\), the quotient \(C_n(X)\) is free on the complementary
subset, the \emph{nondegenerate} tuples, those with \(x_{i-1}\neq x_i\) for
\(1\le i\le n\). No homology is lost in this quotient:
\(\mathbb{Z}[\Nv(X)_\bullet]\) is the direct sum of \(D_\bullet(X)\) and a
subcomplex carried isomorphically onto \(C_\bullet(X)\), and \(D_\bullet(X)\) is
acyclic, so the quotient map is a quasi-isomorphism
\cite[Lemmas 14.23.6 and 14.23.9]{StacksProject}.
For every length \(\ell\in[0,\infty)\) the filtration of \(C_\bullet(X)\) by
length has as its stage at \(\ell\) the subgroup
\begin{equation}\label{eq:filtered-chains}
  F_\ell C_n(X)=\bigl\langle\mathbf{x}\bigm|\mathbf{x}\in\Nv(X)_n
    \ \text{nondegenerate},\ \lambda\mathbf{x}\le\ell\bigr\rangle_{\mathbb{Z}}
    \subseteq C_n(X),
\end{equation}
spanned by the nondegenerate tuples of length at most \(\ell\). It is the
normalised chain complex of the simplicial subset \(\Nv(X)^{\le\ell}\) of
\eqref{eq:length}, a subcomplex by \Cref{lem:filtration}, and \(\ell\le\ell'\) gives an inclusion of
complexes \(F_\ell C_\bullet(X)\subseteq F_{\ell'}C_\bullet(X)\). Reading the
homology of every grade in degree \(n\) turns that chain of inclusions into
persistent magnitude homology:
\[
\PH_n(X)\colon\Vfin^{\op}\to\Ab,
\qquad
\begin{tikzcd}[row sep=2.1em, column sep=2.8em, ampersand replacement=\&]
  \ell \arrow[r, maps to] \arrow[d, "\le"']
    \& H_n\bigl(F_\ell C_\bullet(X)\bigr)
       \arrow[d, "{\substack{\PH_n(X)(\ell\le\ell')\\[0.2em]
            \dnote{[z]\,\longmapsto\,[z]}}}"]\\
  \ell' \arrow[r, maps to]
    \& H_n\bigl(F_{\ell'}C_\bullet(X)\bigr).
\end{tikzcd}
\]
Its value on the unique morphism \(\ell\le\ell'\) of \(\Vfin^{\op}\) is the map
induced on homology by that inclusion: a cycle \(z\) of \(F_\ell C_n(X)\) is a
cycle of \(F_{\ell'}C_n(X)\), and the map sends the class of \(z\) in the
smaller complex to its class in the larger one. These induced maps are the
\emph{transition maps} of the persistence module. This is Otter's blurred
magnitude homology \cite[\S9, Thm.~9.4]{Otter2022MagnitudeMeetsPersistence},
\cite{GovcHepworth2021PersistentMagnitude}, the sublevel condition \(\lambda\mathbf{x}\le\ell\) being the blur that
Leinster and Shulman ask for \cite[\S8]{LeinsterShulman2021MagnitudeHomology},
a version that notices approximate equalities in the triangle inequality. A
nonexpansive \(F\colon X\to Y\) induces a chain map \(C_\bullet(F)\) that
preserves grades of filtrations, so it induces a map
\(\PH_n(F)(\ell)\colon\PH_n(X)(\ell)\to\PH_n(Y)(\ell)\) at each length. These
maps commute with the transition maps, so \(\PH_n(F)\) is a morphism
\(\PH_n(X)\to\PH_n(Y)\) of persistence modules.

\begin{corollary}\label{cor:degree-zero}
For \(X\) in \(\Met\) and \(\ell\in[0,\infty)\), the group \(\PH_0(X)(\ell)\) is
free abelian on the components of the graph with vertex set \(\Ob X\) and an
edge \(xy\) whenever \(d(x,y)\le\ell\). Hence \(\PH_0(X)\) is the degree-zero
persistent homology of the Vietoris--Rips filtration of \(X\)
\cite{Carlsson2009TopologyData}, and for finite \(\Ob X\) its barcode over a
field is the single-linkage dendrogram
\cite{CarlssonMemoli2010HierarchicalClustering}.
\end{corollary}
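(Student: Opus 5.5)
We compute \(H_0\) of the filtered complex \(F_\ell C_\bullet(X)\) directly from the definitions in \eqref{eq:normalised} and \eqref{eq:filtered-chains}. In degree \(0\) every tuple \(\langle x\rangle\) is nondegenerate and has length \(0\le\ell\), so \(F_\ell C_0(X)=\mathbb{Z}[\Ob X]\), and every element of it is a cycle. In degree \(1\) the nondegenerate tuples of length at most \(\ell\) are the pairs \(\langle x,y\rangle\) with \(x\ne y\) and \(d(x,y)\le\ell\). Their boundaries are \(\partial\langle x,y\rangle=d_0\langle x,y\rangle-d_1\langle x,y\rangle=\langle y\rangle-\langle x\rangle\). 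Hence \(\PH_0(X)(\ell)\) is \(\mathbb{Z}[\Ob X]\) modulo the subgroup \(B_\ell\) spanned by the differences \(y-x\) over the edges \(xy\) of the graph \(G_\ell\) in the statement. Since \(d\) is symmetric in \(\Met\), the graph is undirected and the orientation of an edge only changes the sign of its generator.

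Next we identify this quotient. Consider the homomorphism \(\mathbb{Z}[\Ob X]\to\mathbb{Z}[\pi_0G_\ell]\) sending \(x\) to its component. It is surjective and kills \(B_\ell\). Conversely, let \(\sum_x n_x x\) lie in the kernel, so that the coefficients sum to zero on each component. For each component fix a base vertex \(x_0\) and write \(\sum n_x x=\sum n_x(x-x_0)\). Each \(x-x_0\) is a telescoping sum of edge differences along a path in \(G_\ell\), so it lies in \(B_\ell\). This shows \(\PH_0(X)(\ell)\cong\mathbb{Z}[\pi_0G_\ell]\), which is free abelian on the components. Under this isomorphism the transition map for \(\ell\le\ell'\) sends the class \([x]\) to \([x]\), so it is the map induced by the inclusion \(G_\ell\subseteq G_{\ell'}\) on components. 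The identification is therefore natural in \(\ell\).

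For the Vietoris--Rips comparison, note that the \(1\)-skeleton of \(\VR_\ell(X)\) is exactly \(G_\ell\). The \(H_0\) of a simplicial complex depends only on its \(1\)-skeleton and is free on \(\pi_0\). The two persistence modules therefore agree, compatibly with the transition maps.

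For finite \(\Ob X\) we tensor with a field. The module then has finitely many critical values, namely the distances. Its barcode is determined by the merging of components as \(\ell\) grows: one bar \([0,\infty)\) for each component of \(G_\ell\) at large \(\ell\), and one bar \([0,\ell_i)\) for each merge height \(\ell_i\). These merge heights are exactly the heights in the single-linkage dendrogram, as described in the cited references.

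The step most in need of care is the kernel computation, which must handle possibly infinite \(\Ob X\). This is harmless, since chains are finite sums and paths are finite. The other point to check is the matching of conventions with Carlsson--Mémoli: we must confirm that the closed inequality \(d\le\ell\) produces half-open bars of the form stated.
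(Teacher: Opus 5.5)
Your proposal is correct and follows essentially the same route as the paper. Both arguments compute $F_\ell C_0$ and $F_\ell C_1$ the same way, use the same surjection $\mathbb{Z}[\Ob X]\to\mathbb{Z}[\pi_0 G_\ell]$ with the kernel recovered by telescoping along paths, and identify $G_\ell$ with the $1$-skeleton of $\VR_\ell(X)$.

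Two small differences. The paper makes the single-linkage step concrete through the minimax chain height $u(x,y)$, whose closed $\ell$-balls are the components of $G_\ell$; you defer this step to the references. Also, the finite bars $[0,\ell_i)$ should be counted per merge with multiplicity, that is, by the drop in the number of components at $\ell_i$, rather than once per distinct merge height.
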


\looseness=-1
Above degree zero the invariant and Vietoris--Rips persistence part company,
because their filtrations do \cite{Cho2019QuantalesPersistenceMagnitude}: a finite nonempty \(\sigma\subseteq\Ob X\) is a
simplex of \(\VR_\ell(X)\) when \(\max_{x,y\in\sigma}d(x,y)\le\ell\), whereas
\(\mathbf{x}\in\bigl(\Nv(X)^{\le\ell}\bigr)_n\) when
\(\sum_{i=1}^{n}d(x_{i-1},x_i)\le\ell\).
The graded pieces of this filtration carry magnitude homology
\cite{AsaoIzumihara2021GeometricApproachGraphMagnitudeHomology,KanetaYoshinaga2021MagnitudeHomologyOrderComplexes}.

\begin{definition}\label{def:magnitude}
Let \(X\) be an object of \(\Met\) and \(\ell\in[0,\infty)\). Write
\(F_{<\ell}C_\bullet(X)\) for the union \(\bigcup_{\ell'<\ell}F_{\ell'}C_\bullet(X)\)
of the earlier stages of the filtration \eqref{eq:filtered-chains}, a subcomplex
of \(F_\ell C_\bullet(X)\) that is zero at \(\ell=0\), and \(\gr_\ell\) for the
associated graded piece at \(\ell\), the degreewise quotient of the stage at
\(\ell\) by the union of the earlier stages, and put
\[
\MC_{\bullet,\ell}(X)=\gr_\ell C_\bullet(X)
  =F_\ell C_\bullet(X)\big/F_{<\ell}C_\bullet(X),
\qquad
\MH_{n,\ell}(X)=H_n\bigl(\MC_{\bullet,\ell}(X)\bigr).
\]
We call \(n\) the degree of \(\MH_{n,\ell}(X)\) and \(\ell\) its length, a
bigrading by degree and by length. Write
\(\kappa_{\ell'}\colon F_{\ell'}C_\bullet(X)\hookrightarrow F_{<\ell}C_\bullet(X)\), for
\(\ell'<\ell\), and \(\iota_\ell\colon F_{<\ell}C_\bullet(X)\hookrightarrow F_\ell C_\bullet(X)\)
for the inclusions and
\(\pi_\ell\colon F_\ell C_\bullet(X)\twoheadrightarrow\MC_{\bullet,\ell}(X)\) for the quotient
map. The categories, the objects and the maps around this definition assemble,
for every \(\ell'<\ell\), into the commuting diagram
\begin{equation*}\label{eq:mag-pipeline}
\begin{tikzcd}[row sep=2em, column sep=3em, ampersand replacement=\&,
               labels={inner sep=2.2pt}]
  \SSet\colon
    \&[-2.5em]\Nv(X)^{\le\ell'}
      \arrow[r, hook, "\dnote{\mathbf{x}\,\mapsto\,\mathbf{x}}"']
      \arrow[d, "C_\bullet"']
    \&\textstyle\bigcup_{\ell''<\ell}\Nv(X)^{\le\ell''}
      \arrow[r, hook, "\dnote{\mathbf{x}\,\mapsto\,\mathbf{x}}"']
      \arrow[d, "C_\bullet"]
    \&\Nv(X)^{\le\ell}
      \arrow[d, "C_\bullet"]
    \&\\
  \mathrm{Ch}(\Ab)\colon
    \&[-2.5em]F_{\ell'}C_\bullet(X)
      \arrow[r, hook, "\kappa_{\ell'}", "\dnote{z\,\mapsto\,z}"']
      \arrow[d, "H_n"']
    \&F_{<\ell}C_\bullet(X)
      \arrow[r, hook, "\iota_\ell", "\dnote{z\,\mapsto\,z}"']
      \arrow[d, "H_n"]
    \&[0.9em]F_\ell C_\bullet(X)
      \arrow[r, two heads, "\pi_\ell", "\dnote{z\,\mapsto\,z+F_{<\ell}}"']
      \arrow[d, "H_n"]
    \&[1.1em]\MC_{\bullet,\ell}(X)
      \arrow[d, "H_n"]\\
  \Ab\colon
    \&[-2.5em]\PH_n(X)(\ell')
      \arrow[r, "(\kappa_{\ell'})_*", "\dnote{[z]\,\mapsto\,[z]}"']
    \&H_n\bigl(F_{<\ell}C_\bullet(X)\bigr)
      \arrow[r, "(\iota_\ell)_*", "\dnote{[z]\,\mapsto\,[z]}"']
    \&\PH_n(X)(\ell)
      \arrow[r, "(\pi_\ell)_*", "\dnote{[z]\,\mapsto\,[\pi_\ell z]}"']
    \&\MH_{n,\ell}(X),
\end{tikzcd}
\end{equation*}
\looseness=-1
whose rows live in the categories named on their left and whose columns are
the stages before \(\ell\), their union, the stage at \(\ell\) and its quotient,
taken at the level of chains. The top squares commute because all four maps
send a basis simplex to itself, the lower squares because each bottom map is
the \(H_n\)-image of the chain map above it.
\end{definition}

\begin{proposition}\label{prop:mc-explicit}
Let \(X\) be an object of \(\Met\), let \(n\ge 0\) and \(\ell\in[0,\infty)\), and write
\(x\preceq y\preceq z\) when \(d(x,y)+d(y,z)=d(x,z)\). The group
\(\MC_{n,\ell}(X)\) is free abelian on the nondegenerate tuples
\(\mathbf{x}\in\Nv(X)_n\) with \(\lambda\mathbf{x}=\ell\), and the differential
induced on it by the differential \(\partial\) of \(C_\bullet(X)\) is
\begin{equation}\label{eq:differential}
\partial=\sum_{i=1}^{n-1}(-1)^{i}\partial_i\colon
\MC_{n,\ell}(X)\longrightarrow\MC_{n-1,\ell}(X),
\end{equation}
\looseness=-1
where \(\partial_i\) deletes the entry \(x_i\) when
\(x_{i-1}\preceq x_i\preceq x_{i+1}\) and sends \(\mathbf{x}\) to \(0\)
otherwise. The outer faces \(d_0\) and \(d_n\) vanish on the associated graded,
and the differential has bidegree \((-1,0)\).
\end{proposition}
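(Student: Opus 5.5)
The proof works directly on bases, using only the description of \(F_\ell C_\bullet(X)\) in \eqref{eq:filtered-chains} and \Cref{lem:filtration}. The first step identifies the graded group. By \eqref{eq:filtered-chains}, \(F_\ell C_n(X)\) is free on the nondegenerate tuples \(\mathbf{x}\) with \(\lambda\mathbf{x}\le\ell\). The union \(F_{<\ell}C_n(X)\) is the subgroup spanned by the subset of this basis with \(\lambda\mathbf{x}<\ell\), because each such tuple already lies in \(F_{\ell'}C_n(X)\) with \(\ell'=\lambda\mathbf{x}<\ell\). A quotient of a free abelian group by the span of a subset of a basis is free on the complementary subset. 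Hence \(\MC_{n,\ell}(X)\) is free on the classes of nondegenerate tuples with \(\lambda\mathbf{x}=\ell\). Finite length here is automatic, since \(\ell<\infty\).

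The second step computes the induced differential on a basis element \(\mathbf{x}=\langle x_0,\dots,x_n\rangle\) with \(\lambda\mathbf{x}=\ell\). In \(C_\bullet(X)\) we have \(\partial\mathbf{x}=\sum_{i=0}^n(-1)^id_i\mathbf{x}\). A face that is degenerate is already zero in \(C_{n-1}(X)\). By \Cref{lem:filtration}\Cref{itm:len-ops-stmt} each face has length at most \(\ell\), and the face is zero in the quotient exactly when its length is strictly less than \(\ell\).

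For the outer faces, \(\lambda(d_0\mathbf{x})=\ell-d(x_0,x_1)\). Nondegeneracy gives \(x_0\ne x_1\), so separation in \(\Met\) gives \(d(x_0,x_1)>0\), and \(d_0\mathbf{x}\) lies in \(F_{<\ell}\). The same argument applies to \(d_n\). The case \(n=0\) is vacuous, since \(\partial=0\) there.

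For an inner face \(0<i<n\), the length changes by
\[
\lambda\mathbf{x}-\lambda(d_i\mathbf{x})=d(x_{i-1},x_i)+d(x_i,x_{i+1})-d(x_{i-1},x_{i+1})\ge0.
\]
The face survives in the quotient precisely when this difference vanishes, that is, when \(x_{i-1}\preceq x_i\preceq x_{i+1}\). When it survives, \(d_i\mathbf{x}\) either is nondegenerate of length \(\ell\), giving the basis element \(\partial_i\mathbf{x}\), or is degenerate and hence already \(0\). The degenerate case occurs only if \(x_{i-1}=x_{i+1}\). Then the betweenness condition forces \(d(x_{i-1},x_i)+d(x_i,x_{i-1})=0\), so \(x_i=x_{i-1}\), which contradicts nondegeneracy. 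So the degenerate case never arises, and the formula \eqref{eq:differential} holds with the stated sign \((-1)^i\).

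The final step notes that the differential preserves \(\ell\) and lowers \(n\) by one, so it has bidegree \((-1,0)\). It is well defined because \(F_{<\ell}\) is a subcomplex, which follows from \Cref{lem:filtration}\Cref{itm:filt-functor}.

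The only delicate points are the use of separation and symmetry of \(\Met\) to make the outer faces drop length strictly and to exclude degenerate inner faces, and checking that the quotient by \(D_\bullet\) and the quotient by \(F_{<\ell}\) interact compatibly on bases. The latter holds because both are spans of basis subsets.
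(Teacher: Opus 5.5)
Your proposal is correct and follows the paper's proof essentially step for step. The matching steps are the splitting of the basis of \(F_\ell C_n(X)\) into tuples of length \(<\ell\) and of length exactly \(\ell\), the strict drop of the outer faces by separatedness, the defect formula for the inner faces, and the separatedness argument excluding \(x_{i-1}=x_{i+1}\) in the equality case (where symmetry is not actually needed, since nonnegativity and separatedness already force \(x_{i-1}=x_i\)).
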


\looseness=-1
The complex of \Cref{prop:mc-explicit} is the magnitude complex of Leinster and
Shulman, so \(\MH_{n,\ell}\) is their magnitude homology
\cite[\S3, \S5]{LeinsterShulman2021MagnitudeHomology}, which generalises the
magnitude homology of graphs of
Hepworth and Willerton~\cite[Def.~2.4]{HepworthWillerton2017Categorifying}. Their route to it is a
different one: they take the Hochschild homology of the enriched nerve with
coefficients in the diagram constant at the unit, which exists because \(\V\) is
semicartesian (\Cref{lem:base}). Either way the differential sees of the
triangle inequality only whether it holds as an equality
\cite{KanetaYoshinaga2021MagnitudeHomologyOrderComplexes}, and nothing of how
slack it is. Two consequences follow, both visible in \Cref{sec:examples}:

\begin{corollary}\label{cor:no-betweenness}
Let \(X\) be an object of \(\Met\), let \(n\ge 0\) and let \(\ell\in[0,\infty)\).
Suppose no \(x,y,z\in\Ob X\) with \(y\notin\{x,z\}\) satisfy
\(x\preceq y\preceq z\). Then the differential \eqref{eq:differential} is zero
and \(\MH_{n,\ell}(X)\) is free abelian on the nondegenerate tuples of length
\(\ell\).
\end{corollary}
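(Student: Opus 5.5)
By \Cref{prop:mc-explicit} the magnitude complex \(\MC_{\bullet,\ell}(X)\) is free abelian on the nondegenerate tuples of length exactly \(\ell\), with differential \(\partial=\sum_{i=1}^{n-1}(-1)^i\partial_i\). So the plan is to show that every \(\partial_i\) vanishes on every basis tuple. Then the differential is identically zero in every degree, and the homology equals the chains.

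Fix a degree \(m\ge 2\), a nondegenerate tuple \(\mathbf{x}=\langle x_0,\dots,x_m\rangle\) with \(\lambda\mathbf{x}=\ell\), and an inner index \(1\le i\le m-1\). By \Cref{prop:mc-explicit}, \(\partial_i\mathbf{x}\) is nonzero only when \(x_{i-1}\preceq x_i\preceq x_{i+1}\). Nondegeneracy gives \(x_i\neq x_{i-1}\) and \(x_i\neq x_{i+1}\), so \(x_i\notin\{x_{i-1},x_{i+1}\}\). The hypothesis therefore forbids the betweenness relation for this triple, and \(\partial_i\mathbf{x}=0\).

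In degrees \(m\le 1\) the sum defining \(\partial\) is empty, so \(\partial=0\) there trivially. Hence \(\partial=0\) on \(\MC_{m,\ell}(X)\) for all \(m\), in particular for \(m=n\) and \(m=n+1\). This gives \(\MH_{n,\ell}(X)=\ker\partial/\operatorname{im}\partial=\MC_{n,\ell}(X)\), which is free on the nondegenerate tuples of length \(\ell\).

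The only step needing care is a degenerate-looking case: the middle point \(x_i\) might equal the outer point \(x_{i+1}\), or might equal \(x_{i-1}\) when that point coincides with \(x_{i+1}\). Both are excluded by nondegeneracy together with the hypothesis, which requires only \(y\notin\{x,z\}\) and allows \(x=z\). So the only thing to verify is that consecutive distinctness is exactly what nondegeneracy in \(C_\bullet(X)\) provides, and \eqref{eq:normalised} gives precisely that.
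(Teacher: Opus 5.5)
Your proof is correct and follows the paper's own argument. Nondegeneracy gives $x_i\notin\{x_{i-1},x_{i+1}\}$, the hypothesis then kills every inner $\partial_i$ in the differential of \Cref{prop:mc-explicit}, and a complex with zero differential is its own homology; your explicit check that $\partial$ vanishes in degrees $n$ and $n+1$, and trivially in degrees at most $1$, is slightly more careful bookkeeping of that same step.
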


\begin{lemma}\label{lem:digon}
Let \(X\) be an object of \(\Met\) and let \(x\neq y\) in \(\Ob X\) with
\(e=d(x,y)<\infty\). The chain \(\langle x,y\rangle+\langle y,x\rangle\) is a
cycle of \(F_eC_1(X)\) and the boundary of
\(\langle x,y,x\rangle\in F_{2e}C_2(X)\).
\end{lemma}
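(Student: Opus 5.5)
The proof is a direct computation in the normalised complex \(C_\bullet(X)\) of \eqref{eq:normalised}, using the boundary \(\partial=\sum_i(-1)^id_i\) and the explicit description \eqref{eq:filtered-chains} of the filtration stages. I will first check that both chains are nondegenerate, so that they are nonzero basis elements of \(C_\bullet(X)\). The tuple \(\langle x,y\rangle\) is nondegenerate because \(x\neq y\), and likewise \(\langle y,x\rangle\). The tuple \(\langle x,y,x\rangle\) is nondegenerate because consecutive entries differ, namely \(x\neq y\) and \(y\neq x\).

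Next I will compute lengths via \eqref{eq:length}. Symmetry of the metric in \(\Met\) gives \(\lambda\langle x,y\rangle=d(x,y)=e\) and \(\lambda\langle y,x\rangle=d(y,x)=e\). It also gives \(\lambda\langle x,y,x\rangle=d(x,y)+d(y,x)=2e\). Since \(e<\infty\), these are finite lengths, so \(\langle x,y\rangle+\langle y,x\rangle\in F_eC_1(X)\) and \(\langle x,y,x\rangle\in F_{2e}C_2(X)\).

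For the boundary in degree two, the faces are \(d_0\langle x,y,x\rangle=\langle y,x\rangle\), \(d_1\langle x,y,x\rangle=\langle x,x\rangle\) and \(d_2\langle x,y,x\rangle=\langle x,y\rangle\). Hence
\[
\partial\langle x,y,x\rangle=\langle y,x\rangle-\langle x,x\rangle+\langle x,y\rangle .
\]
The middle term is \(s_0\langle x\rangle\), so it lies in \(D_1(X)\) and vanishes in \(C_1(X)\). The boundary is therefore \(\langle x,y\rangle+\langle y,x\rangle\), computed in \(C_\bullet(X)\) and thus in its subcomplex \(F_{2e}C_\bullet(X)\).

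The cycle claim follows from \(\partial\langle x,y\rangle+\partial\langle y,x\rangle=(\langle y\rangle-\langle x\rangle)+(\langle x\rangle-\langle y\rangle)=0\). It also follows from \(\partial\partial=0\), but the direct computation is cleaner because it shows the cycle already lives in \(F_e\). Nothing here is a genuine obstacle. The only points to watch are that the degenerate face \(\langle x,x\rangle\) is zero in the normalised complex, and that symmetry of \(d\) is what makes both lengths equal to \(e\).
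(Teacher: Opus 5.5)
Your proof is correct and follows the paper's own argument step for step. Both check nondegeneracy of the three tuples, use symmetry of \(d\) to get lengths \(e\) and \(2e\), kill the degenerate middle face \(\langle x,x\rangle\) in \(C_1(X)\), and verify directly that \(\langle x,y\rangle+\langle y,x\rangle\) has zero boundary.
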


\looseness=-1
Every pair at finite distance therefore carries a degree-one cycle born no later
than that distance and dead no later than twice it, and these are the short bars
filling the degree-one row of \Cref{fig:barcodes}. On a space all of whose
distances equal \(e\) every such class is born at \(e\), since \(F_\ell C_1(X)\)
vanishes below that length, and is dead by \(2e\).

\begin{lemma}\label{lem:complex}
Let \(F\colon X\to Y\) be nonexpansive, with \(X\) and \(Y\) objects of
\(\Met\).
\begin{enumerate}
\item\label{itm:mc-functor} Sending \(\mathbf{x}\) to \(\langle fx_0,\dots,fx_n\rangle\) when
  \(d(fx_{i-1},fx_i)=d(x_{i-1},x_i)\) for all \(1\le i\le n\), and to \(0\)
  otherwise,
  defines \(\MC_{n,\ell}(F)\) and makes
  \(\MC_{\bullet,\ell}\colon\Met\to\mathrm{Ch}(\Ab)\) and
  \(\MH_{n,\ell}\colon\Met\to\Ab\) functors, one such pair for each degree
  \(n\ge0\) and each length \(\ell\in[0,\infty)\).
\item\label{itm:mc-split} For \(X\) in \(\Met\) the finite-distance equivalence relation of
  \Cref{sec:base} reads \(x\sim y\) if and only if \(d(x,y)<\infty\). Write
  \(\pi_0X=\Ob X/{\sim}\) for the set of classes and, for \(c\in\pi_0X\),
  write \(X_c\) for the subspace with \(\Ob X_c=c\) and
  \(d_{X_c}=d|_{c\times c}\). Then each \(X_c\) has finite distances and
  \begin{equation}\label{eq:splitting}
  \MC_{\bullet,\ell}(X)\;\cong\;\bigoplus_{c\in\pi_0X}\MC_{\bullet,\ell}(X_c)
  \end{equation}
  as complexes, for every \(\ell\in[0,\infty)\).
\end{enumerate}
\end{lemma}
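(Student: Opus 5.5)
Both parts rest on \Cref{prop:mc-explicit}: \(\MC_{n,\ell}(X)\) is free on the nondegenerate tuples of length exactly \(\ell\), with differential \(\sum_{i=1}^{n-1}(-1)^i\partial_i\), where \(\partial_i\) deletes \(x_i\) precisely when \(x_{i-1}\preceq x_i\preceq x_{i+1}\).

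For \Cref{itm:mc-functor}, I first check that the assignment lands in \(\MC_{n,\ell}(Y)\). If \(d(fx_{i-1},fx_i)=d(x_{i-1},x_i)\) for every \(i\), then \(\lambda(\Nv(F)\mathbf{x})=\ell\). The image tuple is also nondegenerate: consecutive \(x_{i-1}\neq x_i\) have positive distance because \(X\) is separated, and that distance is preserved, so \(fx_{i-1}\neq fx_i\) because \(Y\) is separated.

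To avoid a case-by-case check of the chain-map property, I would realise \(\MC_{\bullet,\ell}(F)\) as the map induced on associated graded pieces by \(C_\bullet(F)\). \Cref{lem:filtration}\Cref{itm:filt-map} gives \(C_\bullet(F)(F_\ell)\subseteq F_\ell\) and \(C_\bullet(F)(F_{<\ell})\subseteq F_{<\ell}\), so \(C_\bullet(F)\) descends to a chain map \(\gr_\ell C_\bullet(X)\to\gr_\ell C_\bullet(Y)\). On a basis tuple of length \(\ell\) there are three cases. If some step shrinks strictly, the image has length \(<\ell\) and so vanishes modulo \(F_{<\ell}\); the same holds if the image is degenerate. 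Otherwise the image is exactly the tuple in the statement. Hence the explicit formula agrees with the induced map and is automatically a chain map.

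Functoriality then follows from functoriality of \(C_\bullet\) and of passing to \(\gr_\ell\), using \Cref{lem:filtration}\Cref{itm:filt-nerve}. Applying \(H_n\) yields the functor \(\MH_{n,\ell}\). The main point to handle carefully is the degenerate-image case, which the quotient argument absorbs, since degenerate tuples are zero in \(C_\bullet\).

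For \Cref{itm:mc-split}, in \(\Met\) distances are symmetric, so the relation of \Cref{sec:base} reduces to \(d(x,y)<\infty\). Each \(X_c\) therefore has finite distances, and by \Cref{sec:base} it is a full subspace of \(X\).

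A tuple of finite length \(\ell\) has every consecutive distance finite, so all of its entries lie in a single class \(c\). The basis of \(\MC_{n,\ell}(X)\) is therefore the disjoint union over \(c\in\pi_0X\) of the bases of \(\MC_{n,\ell}(X_c)\), because \(d_{X_c}\) is the restriction of \(d\) and so lengths agree.

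The differential preserves this decomposition, since deleting an entry keeps the remaining entries in the same class. The relation \(\preceq\) is also computed from the same distances in \(X_c\) as in \(X\). The inclusions \(X_c\hookrightarrow X\) are isometric, so by part \Cref{itm:mc-functor} they induce the summand inclusions, and together they give the isomorphism \eqref{eq:splitting} of complexes.
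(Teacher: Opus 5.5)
Your proposal is correct and follows essentially the same route as the paper. In part (1) you realise \(\MC_{n,\ell}(F)\) as the map that \(C_\bullet(F)\) induces on \(F_\ell C_\bullet/F_{<\ell}C_\bullet\) and evaluate it on basis tuples by the strict-shrinking criterion, and in part (2) you partition the basis of \(\MC_{n,\ell}(X)\) by \(\pi_0X\) and check that faces stay within a class. The only difference is cosmetic: you assemble \eqref{eq:splitting} from the chain maps induced by the isometric inclusions \(X_c\hookrightarrow X\), which also makes explicit that \(\preceq\) in \(X_c\) agrees with \(\preceq\) in \(X\), a point the paper leaves implicit.
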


\looseness=-1
The space underlying the free algebra on \(A\) is \(T_UA\), an object of
\(\Met\) by \Cref{thm:free}\Cref{itm:forgetfulfunctor}, hence separated and
symmetric as \Cref{prop:mc-explicit} and \eqref{eq:splitting} require, and
\eqref{eq:splitting} splits off its components, so every length below is finite
and the smaller base \(\Vfin\) suffices.

\section{The Comparison Theorem}\label{sec:comparison}
Magnitude homology is graded by length and the persistence module is filtered by
it, and \Cref{def:magnitude} makes the first the associated graded of the
second. For a field \(\mathbb{F}\) write
\(\PH_n(X;\mathbb{F})(\ell)=H_n\bigl(F_\ell C_\bullet(X)\otimes_{\mathbb{Z}}\mathbb{F}\bigr)\),
with the transition maps induced by the inclusions of the filtration, and
\(\MH_{n,\ell}(X;\mathbb{F})=H_n\bigl(\MC_{\bullet,\ell}(X)\otimes_{\mathbb{Z}}\mathbb{F}\bigr)\).
The barcode with its births and deaths, and tameness, are those of
\Cref{sec:barcodes}.

\begin{theorem}\label{thm:les}
Let \(X\) be an object of \(\Met\), let \(\ell\in[0,\infty)\) and put
\(\PH_n(X)(\ell^-)=\colim_{\ell'<\ell}\PH_n(X)(\ell')\). There is a long exact
sequence of abelian groups, natural in \(X\),
\[
\begin{tikzcd}[row sep=2.8em, column sep=2.6em]
  \cdots \arrow[r, "{\partial_{n+1,\ell}}"]
    & \PH_n(X)(\ell^-) \arrow[r, "{\tau_{n,\ell}}"]
    & \PH_n(X)(\ell) \arrow[r, "{\rho_{n,\ell}}"]
      \arrow[d, phantom, ""{name=Z, coordinate}]
    & \MH_{n,\ell}(X) \arrow[dll, les={Z}{\partial_{n,\ell}}]\\
  & \PH_{n-1}(X)(\ell^-) \arrow[r, "{\tau_{n-1,\ell}}"]
    & \PH_{n-1}(X)(\ell) \arrow[r, "{\rho_{n-1,\ell}}"]
    & \cdots,
\end{tikzcd}
\]
in which \(\tau_{n,\ell}\) is the transition map of the persistence module,
\(\rho_{n,\ell}\) is induced by the quotient onto the associated graded, and
\(\partial_{n,\ell}\) is the connecting homomorphism.
\end{theorem}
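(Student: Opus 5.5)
The plan is to apply the long exact homology sequence to the short exact sequence of chain complexes
\[
0\longrightarrow F_{<\ell}C_\bullet(X)\xrightarrow{\;\iota_\ell\;}F_\ell C_\bullet(X)\xrightarrow{\;\pi_\ell\;}\MC_{\bullet,\ell}(X)\longrightarrow 0
\]
from \Cref{def:magnitude}. This sequence is exact degreewise by construction: \(\MC_{\bullet,\ell}(X)\) is defined as the quotient of the stage at \(\ell\) by the union of the earlier stages, which is a subcomplex. The standard snake-lemma argument then gives a long exact sequence
\[
\cdots\to H_n\bigl(F_{<\ell}C_\bullet(X)\bigr)\xrightarrow{(\iota_\ell)_*}\PH_n(X)(\ell)\xrightarrow{(\pi_\ell)_*}\MH_{n,\ell}(X)\xrightarrow{\partial}H_{n-1}\bigl(F_{<\ell}C_\bullet(X)\bigr)\to\cdots.
\]
I set \(\rho_{n,\ell}=(\pi_\ell)_*\) and let \(\partial_{n,\ell}\) be the connecting map. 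The connecting map is computed as usual: lift a cycle of \(\MC_{n,\ell}\) to \(F_\ell C_n\), apply \(\partial\), and observe that the result lies in \(F_{<\ell}C_{n-1}\).

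The remaining work is to identify \(H_n(F_{<\ell}C_\bullet(X))\) with \(\PH_n(X)(\ell^-)=\colim_{\ell'<\ell}\PH_n(X)(\ell')\), compatibly with the maps. By definition \(F_{<\ell}C_\bullet\) is the union of the subcomplexes \(F_{\ell'}C_\bullet\) for \(\ell'<\ell\). This union is a filtered (directed) colimit of complexes of abelian groups, indexed by the totally ordered set \([0,\ell)\). Homology commutes with filtered colimits in \(\Ab\), because filtered colimits are exact there. Hence the maps \((\kappa_{\ell'})_*\) from the diagram of \Cref{def:magnitude} assemble into an isomorphism \(\colim_{\ell'<\ell}\PH_n(X)(\ell')\cong H_n(F_{<\ell}C_\bullet(X))\).

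The same diagram shows that \((\iota_\ell)_*\circ(\kappa_{\ell'})_*\) is the transition map \(\PH_n(X)(\ell'\le\ell)\). So \(\tau_{n,\ell}\), defined as the map out of the colimit induced by the transition maps, corresponds to \((\iota_\ell)_*\) under this isomorphism. In the edge case \(\ell=0\), the index set is empty, the colimit is \(0\), \(F_{<0}C_\bullet=0\), and the sequence reduces to \(\PH_n(X)(0)\cong\MH_{n,0}(X)\). This is consistent with the general argument.

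Naturality in \(X\) is where I expect the real care to be needed. A nonexpansive \(F\colon X\to Y\) gives a filtered chain map \(C_\bullet(F)\) (\Cref{lem:filtration}), which preserves each \(F_{\ell'}\) and hence \(F_{<\ell}\). It therefore induces a morphism of short exact sequences, and the map it induces on quotients is the map \(\MC_{\bullet,\ell}(F)\) of \Cref{lem:complex}. To confirm this, note that a tuple whose image is strictly shorter lands in \(F_{<\ell}C(Y)\) and so dies in the quotient. A degenerate image is zero in \(C(Y)\); this can only happen if some \(fx_{i-1}=fx_i\), and then that step has length \(0<d(x_{i-1},x_i)\), so the tuple is already sent to \(0\) in the formula of \Cref{lem:complex}. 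Naturality of the snake connecting homomorphism then gives the naturality of every map in the sequence. The colimit identification is also natural, because it is assembled from the natural maps \((\kappa_{\ell'})_*\).
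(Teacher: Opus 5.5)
Your proposal is correct and follows essentially the same route as the paper's proof. Both take the short exact sequence \(0\to F_{<\ell}C_\bullet(X)\to F_\ell C_\bullet(X)\to\MC_{\bullet,\ell}(X)\to 0\), apply the snake lemma, identify the left term as a filtered colimit using that homology commutes with filtered colimits, recognise \(\tau_{n,\ell}\) through \(\iota_\ell\circ\kappa_{\ell'}=\iota_{\ell',\ell}\), and get naturality from the induced morphism of short exact sequences (the paper also records that freeness of \(\MC_{n,\ell}(X)\) makes the sequence split degreewise, so it survives \(-\otimes_{\mathbb{Z}}\mathbb{F}\), which is used later but is not part of this statement).
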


\begin{proof}
Fix \(\ell\in[0,\infty)\), and let
\(\iota_\ell\colon F_{<\ell}C_\bullet(X)\hookrightarrow F_\ell C_\bullet(X)\) and
\(\pi_\ell\colon F_\ell C_\bullet(X)\twoheadrightarrow\MC_{\bullet,\ell}(X)\) be the
inclusion and the quotient map of \Cref{def:magnitude}.
\begin{itemize}
\item \textbf{The Sequence Is Short Exact.} Both maps are chain maps, since
  \(\partial\) carries \(F_{<\ell}C_\bullet(X)\) and \(F_\ell C_\bullet(X)\) into
  themselves by \Cref{lem:filtration}. In each degree \(n\) the group
  \(F_{<\ell}C_n(X)\) is spanned by a subset of the basis that spans
  \(F_\ell C_n(X)\), so \(\iota_\ell\) is injective, \(\pi_\ell\) is surjective
  as a quotient map, and \(\ker\pi_\ell=F_{<\ell}C_n(X)=\operatorname{im}\iota_\ell\)
  by the definition of the quotient. Hence
  \begin{equation}\label{eq:ses}
    0\longrightarrow F_{<\ell}C_\bullet(X)
      \xhookrightarrow{\ \iota_\ell\ }F_\ell C_\bullet(X)
      \xtwoheadrightarrow{\ \pi_\ell\ }\MC_{\bullet,\ell}(X)\longrightarrow 0
  \end{equation}
  is a short exact sequence of chain complexes of free abelian groups.
\item \textbf{The Long Exact Sequence.} A short exact sequence of complexes has
  a long exact homology sequence, with connecting homomorphism supplied by the
  snake lemma \cite[Lemma 12.13.6]{StacksProject}. Write \(\varphi_*\) for the
  map that a chain map \(\varphi\) induces on homology. Applied to
  \eqref{eq:ses} it runs through the groups
  \(H_n\bigl(F_{<\ell}C_\bullet(X)\bigr)\), \(H_n\bigl(F_\ell C_\bullet(X)\bigr)\)
  and \(H_n\bigl(\MC_{\bullet,\ell}(X)\bigr)\), in this order and for
  every \(n\), along the maps \((\iota_\ell)_*\), \((\pi_\ell)_*\) and the
  connecting map \(\partial_{n,\ell}\), which lowers the homological degree by one.
\item \textbf{The Three Terms.} The middle and right terms are the two
  invariants by definition, and the left term is a filtered colimit. Write
  \(\kappa_{\ell'}\colon F_{\ell'}C_\bullet(X)\hookrightarrow F_{<\ell}C_\bullet(X)\) for the
  inclusion of the stage \(\ell'<\ell\). The complexes \(F_{\ell'}C_\bullet(X)\)
  and these inclusions form a filtered system whose colimit is the union
  \(F_{<\ell}C_\bullet(X)\) of \Cref{def:magnitude}, and homology commutes with
  filtered colimits \cite[Lemma 10.8.8]{StacksProject}. The three terms are
  therefore
  \begin{equation}\label{eq:les-terms}
  \begin{aligned}
    H_n\bigl(F_\ell C_\bullet(X)\bigr)&=\PH_n(X)(\ell),\qquad
    H_n\bigl(\MC_{\bullet,\ell}(X)\bigr)=\MH_{n,\ell}(X),\\
    H_n\bigl(F_{<\ell}C_\bullet(X)\bigr)
      &=\operatorname*{colim}_{\ell'<\ell}H_n\bigl(F_{\ell'}C_\bullet(X)\bigr)
       =\operatorname*{colim}_{\ell'<\ell}\PH_n(X)(\ell')=\PH_n(X)(\ell^-),
  \end{aligned}
  \end{equation}
  the second line by that same lemma. Putting
  \(\rho_{n,\ell}=(\pi_\ell)_*\) and \(\tau_{n,\ell}=(\iota_\ell)_*\) in degree
  \(n\) turns this sequence into the sequence of the statement, term by term.
\item \textbf{\(\tau_{n,\ell}\) Is the Transition Map.} For \(\ell'<\ell\) the
  inclusions compose, \(\iota_\ell\circ\kappa_{\ell'}=\iota_{\ell',\ell}\), the
  right-hand side being the inclusion of \(F_{\ell'}C_\bullet(X)\) into
  \(F_\ell C_\bullet(X)\). Applying \(H_n\) and \eqref{eq:les-terms},
  \[
    \tau_{n,\ell}\circ(\kappa_{\ell'})_*=(\iota_{\ell',\ell})_*
      =\PH_n(X)(\ell'\le\ell),
  \]
  which is the transition map from \(\ell'\) to \(\ell\). Every class of the
  colimit is \((\kappa_{\ell'})_*[z]\) for some \(\ell'<\ell\) and some cycle
  \(z\in F_{\ell'}C_n(X)\), so \(\tau_{n,\ell}\) is the map induced on the
  colimit by these transition maps, and on classes it is given by the
  assignment \([z]\mapsto[z]\).
\item \textbf{Naturality in \(X\).} Let \(F\colon X\to Y\) be nonexpansive. By
  \Cref{lem:filtration}\Cref{itm:filt-map} the chain map \(C_\bullet(F)\) preserves both stages
  of the filtration, and by \Cref{lem:complex}\Cref{itm:mc-functor} it descends to the associated
  graded, so it carries \eqref{eq:ses} for \(X\) to \eqref{eq:ses} for \(Y\).
  The long exact sequence of a short exact sequence is natural for such a map
  \cite[Lemmas 12.5.18 and 12.13.6]{StacksProject}. On the transition maps this naturality
  is the commutativity of the left square below, which the right square
  evaluates on one class of the upper left corner,
  \[
  \begin{tikzcd}[row sep=2.2em, column sep=2.6em]
    \PH_n(X)(\ell^-) \arrow[r, "{\tau^X_{n,\ell}}"]
      \arrow[d, "{\PH_n(F)(\ell^-)}"']
      & \PH_n(X)(\ell) \arrow[d, "{\PH_n(F)(\ell)}"]\\
    \PH_n(Y)(\ell^-) \arrow[r, "{\tau^Y_{n,\ell}}"']
      & \PH_n(Y)(\ell),
  \end{tikzcd}
  \qquad
  \begin{tikzcd}[row sep=2.2em, column sep=2.2em]
    {[z]} \arrow[r, maps to] \arrow[d, maps to]
      & {[z]} \arrow[d, maps to]\\
    {\bigl[C_\bullet(F)z\bigr]} \arrow[r, maps to]
      & {\bigl[C_\bullet(F)z\bigr]},
  \end{tikzcd}
  \]
  where both composites send \([z]\) to \(\bigl[C_\bullet(F)z\bigr]\), so the
  left square commutes. The same argument on \(\rho\) and on \(\partial\) makes
  the whole sequence natural in \(X\).
\item \looseness=-1\textbf{Coefficients in a Field.} By \Cref{prop:mc-explicit} the group
  \(\MC_{n,\ell}(X)\) is free abelian in every degree, so \eqref{eq:ses} splits
  degreewise and stays exact after \(-\otimes_{\mathbb{Z}}\mathbb{F}\) for every
  field \(\mathbb{F}\), and the first four parts above, run on the tensored
  sequence, return the sequence with \(\mathbb{F}\)-coefficients.\qedhere
\end{itemize}
\end{proof}

\begin{corollary}\label{cor:critical}
Let \(X\) be an object of \(\Met\), let \(n\ge 0\) and let \(\ell\in[0,\infty)\).
If \(\MH_{n,\ell}(X)=0=\MH_{n+1,\ell}(X)\), then
\(\PH_n(X)(\ell^-)\to\PH_n(X)(\ell)\) is an isomorphism. For finite \(X\) over a
field \(\mathbb{F}\), where every bar is an interval \([b,b')\) or \([b,\infty)\), every
birth and every death in the barcode of \(\PH_n(X;\mathbb{F})\) therefore lies in
\(\{\ell\mid\MH_{n,\ell}(X;\mathbb{F})\neq 0\}\cup\{\ell\mid\MH_{n+1,\ell}(X;\mathbb{F})\neq 0\}\).
\end{corollary}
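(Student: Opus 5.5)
The plan is to read the first claim directly off the long exact sequence of \Cref{thm:les} in degrees \(n+1\) and \(n\), and then translate it into a statement about barcode endpoints.

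\textbf{The isomorphism.} The relevant stretch of the sequence is
\[
\MH_{n+1,\ell}(X)\xrightarrow{\partial_{n+1,\ell}}\PH_n(X)(\ell^-)\xrightarrow{\tau_{n,\ell}}\PH_n(X)(\ell)\xrightarrow{\rho_{n,\ell}}\MH_{n,\ell}(X).
\]
Exactness at \(\PH_n(X)(\ell^-)\) gives \(\ker\tau_{n,\ell}=\operatorname{im}\partial_{n+1,\ell}=0\). Exactness at \(\PH_n(X)(\ell)\) gives \(\operatorname{im}\tau_{n,\ell}=\ker\rho_{n,\ell}=\PH_n(X)(\ell)\). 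So \(\tau_{n,\ell}\) is an isomorphism. For \(n=0\) the same argument applies, and at \(\ell=0\) the colimit is over the empty set and is zero, which is consistent.

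\textbf{The field version.} Over \(\mathbb{F}\) I use the field form of \Cref{thm:les} from its last bullet; the same exactness argument gives \(\tau_{n,\ell}\) an isomorphism whenever both \(\MH_{n,\ell}(X;\mathbb{F})\) and \(\MH_{n+1,\ell}(X;\mathbb{F})\) vanish.

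For finite \(X\) the finiteness enters as follows.

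1. Only finitely many lengths \(\lambda\mathbf{x}\) of nondegenerate tuples occur in each degree, since such tuples are finite in number.
2. Hence each \(F_\ell C_n(X)\otimes\mathbb{F}\) is finite dimensional and piecewise constant in \(\ell\), changing only at those finitely many values.
3. So the module is pointwise finite dimensional and constant on half-open intervals \([\ell_k,\ell_{k+1})\). Together with \Cref{sec:barcodes} this yields a barcode of intervals \([b,b')\) or \([b,\infty)\).
4. In particular, for \(\ell'<\ell\) close enough to \(\ell\), the colimit \(\PH_n(X;\mathbb{F})(\ell^-)\) equals \(\PH_n(X;\mathbb{F})(\ell')\).

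\textbf{Reading off endpoints.} By the barcode decomposition, \(\tau_{n,\ell}\) is a direct sum over bars of the maps induced by interval modules. For a summand \([b,b')\) that map fails to be an isomorphism exactly at \(\ell=b\) and at \(\ell=b'\):

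1. at \(\ell=b\) the map is \(0\to\mathbb{F}\), which is not surjective;
2. at \(\ell=b'\) the map is \(\mathbb{F}\to 0\), which is not injective.

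A direct sum of maps is an isomorphism only if every summand is. So if some bar had an endpoint at \(\ell\), then \(\tau_{n,\ell}\) would fail to be an isomorphism. By the first part, at least one of \(\MH_{n,\ell}\) and \(\MH_{n+1,\ell}\) must then be nonzero.

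The finer reading, not needed for the statement, is this: a birth forces \(\rho_{n,\ell}\neq0\), hence \(\MH_{n,\ell}\neq0\), and a death forces \(\partial_{n+1,\ell}\neq0\), hence \(\MH_{n+1,\ell}\neq0\).

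\textbf{Main obstacle.} The delicate step is identifying the left-limit colimit with the module at nearby \(\ell'<\ell\), and matching the half-open convention \([b,b')\) with the sublevel filtration \(\lambda\mathbf{x}\le\ell\). I will handle this by the finiteness argument in steps 1–4, so that the colimit is attained at a single stage \(\ell'\).
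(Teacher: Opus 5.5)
Your proposal is correct and follows the paper's route. You use exactness of the sequence of \Cref{thm:les} at \(\PH_n(X)(\ell^-)\) and at \(\PH_n(X)(\ell)\). Then, for finite \(X\), you use the blocks of constancy cut out by the finitely many lengths in degrees \(\le n+1\), bars that are unions of blocks, and the failure of \(\tau\) on each interval summand at its endpoints, combined with the fact that a direct sum of maps is bijective only when every summand is. The one variation is that you identify \(\PH_n(X;\mathbb{F})(\ell^-)\) with the value at a single stage \(\ell'\) in the preceding block, whereas the paper computes the left limits of interval modules and commutes the colimit with the direct sum; both work. You should spell out the step from constancy on blocks to every bar being a union of blocks, which the paper proves by naturality of \(\Theta\), rather than attributing it to \Cref{sec:barcodes}.
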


\begin{figure}[t]
\centering
\scalebox{0.82}{\begin{tikzpicture}[x=1.0421mm,y=1mm,font=\small]

\begin{scope}[on background layer]
  \fill[accentpale] (0,-3.0) rectangle (28,39.0);
  \fill[accentpale] (56,-3.0) rectangle (84,39.0);
\end{scope}

\draw[complement,line width=0.7pt,dash pattern=on 0.9pt off 1.3pt,
      preaction={draw,line width=2.0pt,white},
      -{Straight Barb[length=1.5mm,width=1.6mm]}] (0,19.9)--(0,31.9);
\draw[accent,line width=0.7pt,dash pattern=on 0.9pt off 1.3pt,
      preaction={draw,line width=2.0pt,white},
      -{Straight Barb[length=1.5mm,width=1.6mm]}] (28,9.9)--(28,23.9);
\draw[accent,line width=0.7pt,dash pattern=on 0.9pt off 1.3pt,
      preaction={draw,line width=2.0pt,white},
      -{Straight Barb[length=1.5mm,width=1.6mm]}] (56,14.9)--(56,23.9);

\node[left,inner sep=2pt,font=\footnotesize] at (-1.5,33.5) {$\MH_{0,\ell}(X;\mathbb{F})$};
\draw[accentsoft,line width=0.4pt] (0,33.5)--(112,33.5);
\fill[complement] (-1.05,32.45) rectangle (1.05,34.55);
\node[above right,inner sep=1.4pt,font=\footnotesize,complement] at (1.4,34.2) {$\mathbb{F}^{3}$};
\draw[complement,line width=0.7pt] (84,32.2)--(84,34.8);
\node[above,inner sep=1.8pt,font=\footnotesize,complement] at (84,34.8) {$0$};

\node[left,inner sep=2pt,font=\footnotesize] at (-1.5,25.5) {$\MH_{1,\ell}(X;\mathbb{F})$};
\draw[accentsoft,line width=0.4pt] (0,25.5)--(112,25.5);
\fill[accent] (26.95,24.45) rectangle (29.05,26.55);
\node[above,inner sep=2.0pt,font=\footnotesize,accent] at (28,26.6) {$\mathbb{F}^{2}$};
\fill[accent] (54.95,24.45) rectangle (57.05,26.55);
\node[above,inner sep=2.0pt,font=\footnotesize,accent] at (56,26.6) {$\mathbb{F}^{2}$};
\draw[accent,line width=0.7pt] (84,24.2)--(84,26.8);
\node[above,inner sep=1.8pt,font=\footnotesize,accent] at (84,26.8) {$0$};

\node[left,inner sep=2pt,font=\footnotesize] at (-1.5,13.5) {$\PH_{0}(X;\mathbb{F})$};
\draw[ink,line width=0.9pt] (0,18.5)--(105,18.5);
\draw[ink,line width=0.9pt,dotted] (105.8,18.5)--(112,18.5);
\fill[complement] (0,18.5) circle (0.95);
\draw[ink,line width=0.9pt] (0,13.5)--(56,13.5);
\fill[complement] (0,13.5) circle (0.95);
\draw[accent,line width=0.7pt,fill=white] (56,13.5) circle (0.95);
\draw[ink,line width=0.9pt] (0,8.5)--(28,8.5);
\fill[complement] (0,8.5) circle (0.95);
\draw[accent,line width=0.7pt,fill=white] (28,8.5) circle (0.95);

\node[inner sep=1pt,font=\footnotesize,inkmute] at (14,2.4) {$B_{0}$};
\node[inner sep=1pt,font=\footnotesize,inkmute] at (42,2.4) {$B_{1}$};
\node[inner sep=1pt,font=\footnotesize,inkmute] at (70,2.4) {$B_{2}$};
\node[inner sep=1pt,font=\footnotesize,inkmute] at (98,2.4) {$B_{3}$};

\draw[ink,line width=0.5pt] (0,-3.0)--(112,-3.0);
\foreach \bx/\bl in {0/0,28/1,56/2,84/3}{%
  \draw[ink,line width=0.5pt] (\bx,-3.0)--(\bx,-4.2);
  \node[below,inner sep=1.5pt,font=\footnotesize] at (\bx,-4.2) {$\bl$};}
\node[right,inner sep=2pt,font=\footnotesize] at (112,-3.0) {$\ell$};
\node[below,inner sep=1.5pt,font=\footnotesize,inkmute] at (42,-6.4)
     {$\Lambda=\{0,1,2,3\}$};

\begin{scope}[font=\footnotesize]
  \fill[complement] (1.2,47.5) circle (1.0);
  \node[right,inner sep=2.2pt,inkmute] at (2.6,47.5) {Birth};
  \draw[accent,line width=0.7pt,fill=white] (19.0,47.5) circle (1.0);
  \node[right,inner sep=2.2pt,inkmute] at (20.4,47.5) {Death};
\end{scope}

\draw[accentsoft,line width=0.4pt,rounded corners=1.5pt] (62,41.5) rectangle (113,55.5);
\begin{scope}[shift={(66,50.5)}]
  \draw[inkmute,line width=0.5pt] (0,0)--(42,0);
  \foreach \px/\pl in {0/{x_{0}},14/{x_{1}},42/{x_{2}}}{%
    \fill[ink] (\px,0) circle (0.9);
    \node[above,inner sep=2.2pt,font=\footnotesize] at (\px,0.9) {$\pl$};}
  \draw[inkmute,line width=0.4pt] (0,-1.8)--(0,-3.6)--(14,-3.6)--(14,-1.8);
  \node[below,inner sep=1.0pt,font=\footnotesize] at (7,-3.6) {$1$};
  \draw[inkmute,line width=0.4pt] (14,-1.8)--(14,-3.6)--(42,-3.6)--(42,-1.8);
  \node[below,inner sep=1.0pt,font=\footnotesize] at (28,-3.6) {$2$};
\end{scope}
\end{tikzpicture}}
\caption{\looseness=-1 The running example of the proof of \Cref{cor:critical}: \(X\) in
\(\Met\) with \(\Ob X=\{x_0,x_1,x_2\}\), \(d(x_0,x_1)=1\), \(d(x_1,x_2)=2\),
\(d(x_0,x_2)=3\), degree \(n=0\), over a field \(\mathbb{F}\). The bands are
the blocks \(B_0,\dots,B_3\) of the set \(\Lambda=\{0,1,2,3\}\) of lengths of
nondegenerate simplices of degree at most \(n+1\), on each of which the
module is constant. The three half-open bars are the barcode \([0,\infty)\),
\([0,2)\), \([0,1)\), a filled endpoint a birth, a hollow one a death, the
dotted tail the bar that never dies. Squares mark the two magnitude groups where they do not vanish,
with dimensions, and each arrow runs from a barcode endpoint to the group
whose nonvanishing it forces: the birth at \(0\) to \(\MH_{0,\ell}\), the
deaths to \(\MH_{1,\ell}\). At \(\ell=3\) a block ends but the barcode does
not change, and ticks record the value \(0\), the differential of
\Cref{prop:mc-explicit} carrying \(\MC_{2,3}(X)\) isomorphically onto
\(\MC_{1,3}(X)\).}
\label{fig:critical}
\end{figure}

\looseness=-1
For finite \(X\) over a field the sequence counts endpoints with multiplicity:
exactness at \(\PH_n(X)(\ell)\), at \(\MH_{n,\ell}(X)\) and at
\(\PH_{n-1}(X)(\ell^-)\) gives
\(\dim\MH_{n,\ell}(X;\mathbb{F})=\dim\operatorname{coker}\tau_{n,\ell}
+\dim\ker\tau_{n-1,\ell}\), with \(\tau_{-1,\ell}=0\), and the table
\eqref{eq:endpoint-table} in the proof of \Cref{cor:critical} evaluates the
first term as the number of bars of \(\PH_n(X;\mathbb{F})\) born at \(\ell\) and
the second as the number of bars of \(\PH_{n-1}(X;\mathbb{F})\) dying at \(\ell\).

\section{Persistence Modules, Barcodes and Stability}\label{sec:barcodes}
The indexing category is \(\Vfin^{\op}\), the
objects \([0,\infty)\) of \Cref{sec:base} with one morphism \(\ell\to\ell'\)
when \(\ell\le\ell'\) and none otherwise. Let \(\A\) be an abelian category. A
\emph{persistence module} in \(\A\) is a functor \(M\colon\Vfin^{\op}\to\A\)
\cite{ChazalDeSilvaGlisseOudot2016Structure,Oudot2015PersistenceTheory}, with
\emph{transition maps} \(M(\ell\le\ell')\colon M\ell\to M\ell'\). Persistence
modules and their natural transformations form the functor category
\(\A^{\Vfin^{\op}}\), abelian because \(\A\) is abelian and kernels and
cokernels are computed pointwise in \(\ell\)
\cite[p.~614]{BubenikScott2014Categorification}.
Fix a field \(\mathbb{F}\) and take \(\A=\Vect_{\mathbb{F}}\). A subset \(J\subseteq[0,\infty)\) is
an \emph{interval} if it is nonempty and \(\ell\le m\le\ell'\) with
\(\ell,\ell'\in J\) forces \(m\in J\). The \emph{interval module}
\(\mathbb{F}_J\colon\Vfin^{\op}\to\Vect_{\mathbb{F}}\) has \((\mathbb{F}_J)\ell=\mathbb{F}\) for \(\ell\in J\) and \(0\)
otherwise, with every transition map between nonzero terms the identity of
\(\mathbb{F}\). Call \(M\) \emph{tame} if the dimension \(\dim_{\mathbb{F}}M\ell\) is finite for every
\(\ell\in[0,\infty)\).

\begin{theorem}\label{thm:barcode}
Let \(\mathbb{F}\) be a field and let \(M\colon\Vfin^{\op}\to\Vect_{\mathbb{F}}\) be tame. Then
\(M\cong\bigoplus_{J\in\mathcal{B}}\mathbb{F}_J\) for a multiset \(\mathcal{B}\) of
intervals, and \(\mathcal{B}\) is determined by \(M\) up to bijection.
\end{theorem}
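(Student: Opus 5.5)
The statement is Crawley-Boevey's decomposition theorem for pointwise finite-dimensional persistence modules over a totally ordered index set. I would prove it by reduction to that setting rather than from scratch. The indexing category \(\Vfin^{\op}\) is the poset \([0,\infty)\) with its usual order, which is totally ordered. Tameness as defined above is exactly pointwise finite dimensionality. So \cite[Thm.~1.1]{CrawleyBoevey2015Decomposition} applies verbatim and gives the direct sum decomposition into interval modules.

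The intervals there are convex subsets of the index set. Our definition of interval is the same notion restricted to \([0,\infty)\). Our \(\mathbb{F}_J\) is his interval module, with identity transition maps inside \(J\) and zero outside.

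Should a self-contained sketch be wanted, the plan follows Crawley-Boevey's route.

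\textbf{Existence.} For each \(\ell\) and each cut \(c=(c^-,c^+)\) of the totally ordered set, one defines the subspaces
\[
\operatorname{im}^{\pm}_{c}\ell \quad\text{and}\quad \ker^{\pm}_{c}\ell
\]
as unions and intersections of images and kernels of the transition maps. Pointwise finite dimensionality makes these descending and ascending families stabilise. This yields a functorial filtration of \(M\ell\) whose subquotients \(V^\pm_J\ell\) are indexed by intervals \(J\ni \ell\). The \textbf{main obstacle} is to choose complements \(C_J\ell\) compatibly across \(\ell\), so that the transition maps restrict to isomorphisms \(C_J\ell\to C_J\ell'\) for \(\ell\le\ell'\) in \(J\). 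Crawley-Boevey handles this with a "section" lemma: for a fixed \(J\), the spaces \(F^+_J\ell/F^-_J\ell\) form a module isomorphic to a direct sum of copies of \(\mathbb{F}_J\). One then shows that the sum of the chosen sub-modules over all \(J\) is direct and exhausts \(M\) by a dimension count at each \(\ell\). At that step I would cite his Lemma on disjoint sections and the filtration argument rather than redo them.

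\textbf{Uniqueness.} This follows from Krull--Remak--Schmidt--Azumaya. Each \(\mathbb{F}_J\) has endomorphism ring \(\mathbb{F}\), which is local. Azumaya's theorem then makes the multiset \(\mathcal{B}\) an isomorphism invariant. Alternatively, one can count intervals via ranks of the transition maps, \(\operatorname{rank}M(\ell\le\ell')\), together with the limits at the cuts.
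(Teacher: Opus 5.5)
Your proposal is correct and follows the paper's proof: existence comes from citing Crawley-Boevey's Theorem~1.1, after identifying \(\Vfin^{\op}\) with the totally ordered set \([0,\infty)\) and tameness with pointwise finite-dimensionality, and uniqueness comes from Krull--Remak--Schmidt--Azumaya, using \(\End(\mathbb{F}_J)\cong\mathbb{F}\). The paper also spells out two small checks you pass over: that \(\mathbb{Q}\cap[0,\infty)\) is a countable order-dense subset, which is the standing hypothesis of Crawley-Boevey's \S1, and that \(\mathbb{F}_J\cong\mathbb{F}_{J'}\) forces \(J=J'\) by comparing pointwise dimensions.
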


\looseness=-1
It is the \emph{barcode} \cite{CarlssonZomorodianCollinsGuibas2005PersistenceBarcodes,Ghrist2008Barcodes}
of \(M\), and its elements are the \emph{bars}. For
a bar \(J\in\mathcal{B}\) call \(\inf J\) a \emph{birth} of \(M\), and call
\(\sup J\), when it is finite, a \emph{death} of \(M\). Tameness is a genuine
restriction: \(T_UA\) is infinite for many theories of interest, and
\(F_\ell C_n\) then has infinite rank. What is finite, and what each example
checks, is the dimension of the homology in each degree, immediate for the
finite spaces of \Cref{sec:examples}.

\begin{definition}\label{def:interleaving}
\looseness=-1
Let \(M,N\colon\Vfin^{\op}\to\A\) and let \(\delta\in[0,\infty)\). A
\emph{\(\delta\)-interleaving}
\cite{ChazalCohenSteinerGlisseGuibasOudot2009Proximity} of \(M\) and \(N\) is a
pair of families
\(\varphi_\ell\colon M\ell\to N(\ell+\delta)\) and
\(\psi_\ell\colon N\ell\to M(\ell+\delta)\), natural in \(\ell\), with
\(\psi_{\ell+\delta}\varphi_\ell=M(\ell\le\ell+2\delta)\) and
\(\varphi_{\ell+\delta}\psi_\ell=N(\ell\le\ell+2\delta)\) for all \(\ell\). Put
\(d_I(M,N)=\inf\{\delta\mid M\ \text{and}\ N\ \text{are}\ \delta\text{-interleaved}\}\),
with \(\inf\varnothing=\infty\). This is the interleaving distance
\(d_I\colon\Ob(\A^{\Vfin^{\op}})\times\Ob(\A^{\Vfin^{\op}})\to\Rp\) of
\cite[Ch.~4]{ChazalDeSilvaGlisseOudot2016Structure}, there indexed by the real
line and here \(\Vfin^{\op}\).
\end{definition}

\begin{proposition}\label{prop:interleaving-vcat}
\looseness=-1
The function \(d_I\) vanishes on the diagonal, is symmetric, and satisfies the
triangle inequality. Persistence modules in \(\A\) therefore carry the structure
of a symmetric \(\V\)-category in the sense of \Cref{sec:base}. It is separated
only after quotienting by the relation \(d_I=0\).
\end{proposition}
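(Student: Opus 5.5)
The plan is to verify the three metric axioms directly from \Cref{def:interleaving} and then translate them into the \(\V\)-category axioms of \Cref{sec:base}. One bookkeeping point matters: \(d_I\) is an infimum over an upward-closed set of \(\delta\) that need not contain its infimum. So the triangle inequality is argued at the level of witnesses \(\delta_1,\delta_2\), and only then passed to infima. The infimum is taken in \(\Rp\), with the conventions \(\inf\varnothing=\infty\) and \(a+\infty=\infty\) of \eqref{eq:ominus}.

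\emph{Diagonal.} Take \(\delta=0\) with \(\varphi_\ell=\psi_\ell=1_{M\ell}\). Naturality is immediate, and both composites equal \(M(\ell\le\ell)=1\), so \(d_I(M,M)=0\).

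\emph{Symmetry.} The definition is symmetric under exchanging \((M,\varphi)\) with \((N,\psi)\). Hence the sets of admissible \(\delta\) for \((M,N)\) and for \((N,M)\) coincide, and so do their infima.

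\emph{Triangle inequality.} This is the only step with content, and I expect it to be the main, though routine, obstacle. Suppose \((\varphi,\psi)\) is a \(\delta_1\)-interleaving of \(M,N\) and \((\varphi',\psi')\) is a \(\delta_2\)-interleaving of \(N,P\). Put
\[
\Phi_\ell=\varphi'_{\ell+\delta_1}\varphi_\ell\colon M\ell\to P(\ell+\delta_1+\delta_2),
\qquad
\Psi_\ell=\psi_{\ell+\delta_2}\psi'_\ell\colon P\ell\to M(\ell+\delta_1+\delta_2).
\]
Both are natural as composites of natural families. Write \(\delta=\delta_1+\delta_2\). Then
\[
\Psi_{\ell+\delta}\Phi_\ell
=\psi_{\ell+\delta+\delta_2}\,\psi'_{\ell+\delta+\delta_1}\varphi'_{\ell+\delta_1}\,\varphi_\ell
=\psi_{\ell+\delta+\delta_2}\,N(\ell+\delta_1\le\ell+\delta_1+2\delta_2)\,\varphi_\ell .
\]
Naturality of \(\varphi\) moves the transition map past \(\varphi\), giving \(\varphi_{\ell+2\delta_2}M(\ell\le\ell+2\delta_2)\). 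Then \(\psi_{\ell+2\delta_2+\delta_1}\varphi_{\ell+2\delta_2}=M(\ell+2\delta_2\le\ell+2\delta)\), and functoriality of \(M\) yields \(M(\ell\le\ell+2\delta)\). The other composite is handled by the same computation with the roles exchanged. The indices must be tracked carefully, since each \(\varphi,\psi\) shifts by its own \(\delta_i\); that bookkeeping is the whole difficulty.

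So \(M,P\) are \((\delta_1+\delta_2)\)-interleaved whenever \(M,N\) are \(\delta_1\)-interleaved and \(N,P\) are \(\delta_2\)-interleaved. Taking infima over \(\delta_1\) and \(\delta_2\) gives \(d_I(M,P)\le d_I(M,N)+d_I(N,P)\). The case where either side is \(\infty\) is trivial.

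By the unwinding in \Cref{sec:base}, a \(\V\)-category is exactly a set with such a \(d\) satisfying \(d(x,x)=0\) and the triangle inequality, and symmetry was shown above. This gives a symmetric \(\V\)-category on \(\Ob(\A^{\Vfin^{\op}})\). If size is a concern, restrict to a small set of representatives or regard it as a large \(\V\)-category; I would remark on this rather than dwell on it.

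For separatedness, I would exhibit two modules that are at distance \(0\) without being equal. Isomorphic but distinct modules already suffice, since an isomorphism gives a \(0\)-interleaving. For a non-isomorphic example, take \(\mathbb{F}_{[0,1)}\) and \(\mathbb{F}_{[0,1]}\), which are \(\delta\)-interleaved for every \(\delta>0\). Hence separation holds only after quotienting by \(d_I=0\). That relation is an equivalence relation by the three axioms, and \(d_I\) descends to the quotient by the triangle inequality.
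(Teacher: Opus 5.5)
Your proof is correct and follows essentially the paper's route. You use the identity \(0\)-interleaving for the diagonal and swap \((\varphi,\psi)\) for symmetry. For the triangle inequality you compose interleavings into \(\Phi_\ell=\varphi'_{\ell+\delta_1}\varphi_\ell\) and \(\Psi_\ell=\psi_{\ell+\delta_2}\psi'_\ell\) and add infima. For the failure of separatedness you use \(\mathbb{F}_{[0,1)}\) against \(\mathbb{F}_{[0,1]}\); you only assert that they are \(\delta\)-interleaved for every \(\delta>0\), whereas the paper writes the interleaving maps down and checks them. One index slip: the middle factor in your first display should be \(\psi'_{\ell+\delta}\), not \(\psi'_{\ell+\delta+\delta_1}\). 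Your next equality in fact uses \(\psi'_{\ell+\delta}\), so the argument is unaffected.
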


\begin{theorem}\label{thm:distortion}
Let \(X\) and \(Y\) be objects of \(\Met\) with the same underlying set and
\(d_X\le d_Y+\delta\) as well as \(d_Y\le d_X+\delta\) pointwise, an inequality
in \(\Rp\) that avoids subtracting infinities. Then \(\PH_n(X)\) and
\(\PH_n(Y)\) are \((n+1)\delta\)-interleaved, so
\(d_I(\PH_nX,\PH_nY)\le(n+1)\delta\).
\end{theorem}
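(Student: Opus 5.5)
The plan is to interleave at the level of chains and then pass to homology. The underlying sets of \(X\) and \(Y\) agree, so \(\Nv(X)_n=\Nv(Y)_n\) as sets of tuples. Nondegeneracy of a tuple depends only on equality of consecutive entries, so \(C_n(X)\) and \(C_n(Y)\) are the same free abelian group on the same basis, with the same differential \(\partial=\sum_i(-1)^id_i\). Only the two filtrations differ, through the two length functions \(\lambda_X\) and \(\lambda_Y\).

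The first step compares the lengths of a single tuple. For an \(n\)-simplex \(\mathbf{x}=\langle x_0,\dots,x_n\rangle\), summing \(d_Y(x_{i-1},x_i)\le d_X(x_{i-1},x_i)+\delta\) over the \(n\) edges gives
\[
\lambda_Y\mathbf{x}\le\lambda_X\mathbf{x}+n\delta,
\]
and symmetrically with \(X\) and \(Y\) exchanged. Infinite summands cause no trouble: the hypothesis is an inequality in \(\Rp\), and an infinite term on the left forces one on the right. Hence the identity map sends \(F^X_\ell C_n\) into \(F^Y_{\ell+n\delta}C_n\). This inclusion is not yet a chain map between fixed filtration stages, because the shift depends on the degree. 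That is the main obstacle, and it is where the factor \(n+1\) rather than \(n\) enters.

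To handle it, fix the degree \(n\) and use the shift \(\varepsilon=(n+1)\delta\), the largest shift needed in degrees \(\le n+1\). A chain \(z\in F^X_\ell C_k\) with \(k\le n+1\) lies in \(F^Y_{\ell+k\delta}C_k\subseteq F^Y_{\ell+\varepsilon}C_k\). Computing \(H_n\) of a complex only uses the complex in degrees \(n+1\), \(n\), \(n-1\). So the truncation of the identity to degrees \(\le n+1\) gives maps
\[
\varphi_\ell\colon H_n\bigl(F^X_\ell C_\bullet\bigr)\to H_n\bigl(F^Y_{\ell+\varepsilon}C_\bullet\bigr),\qquad [z]\mapsto[z].
\]
These are well defined: a cycle of \(F^X_\ell C_n\) is a cycle in \(F^Y_{\ell+n\delta}C_n\subseteq F^Y_{\ell+\varepsilon}C_n\), and if \(z=\partial w\) with \(w\in F^X_\ell C_{n+1}\), then \(w\in F^Y_{\ell+(n+1)\delta}C_{n+1}\). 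The symmetric argument gives \(\psi_\ell\colon H_n(F^Y_\ell C_\bullet)\to H_n(F^X_{\ell+\varepsilon}C_\bullet)\).

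Naturality in \(\ell\) and the two interleaving identities then follow at once. Every map involved, the transition maps of \(\PH_n(X)\) and \(\PH_n(Y)\) as well as \(\varphi\) and \(\psi\), sends a class \([z]\) to the class of the same chain \(z\). Hence \(\psi_{\ell+\varepsilon}\varphi_\ell[z]=[z]=\PH_n(X)(\ell\le\ell+2\varepsilon)[z]\), and likewise \(\varphi_{\ell+\varepsilon}\psi_\ell=\PH_n(Y)(\ell\le\ell+2\varepsilon)\). The two families therefore form an \((n+1)\delta\)-interleaving in the sense of \Cref{def:interleaving}, and \(d_I(\PH_nX,\PH_nY)\le(n+1)\delta\) follows from the definition of \(d_I\) as an infimum.

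The one point to state carefully is that \(\varphi_\ell\) is well defined, namely that bounding chains in degree \(n+1\) need a shift of \((n+1)\delta\) and not just \(n\delta\). That is exactly the origin of the constant in the statement.
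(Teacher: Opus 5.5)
Your proposal is correct and follows essentially the same route as the paper. The paper truncates the normalised chains to degrees at most $n+1$, defines identity-on-simplices chain maps $F_\ell C'_\bullet(X)\to F_{\ell+(n+1)\delta}C'_\bullet(Y)$ and back, and applies $H_n$; you define the same maps $[z]\mapsto[z]$ directly on homology and check well-definedness on cycles and on degree-$(n+1)$ bounding chains, which is the same argument.
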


\looseness=-1
The factor \(n+1\) counts the distances that \(\lambda\) adds in simplicial
degree \(n+1\), the top degree needed to compute \(H_n\). \Cref{sec:examples} exhibits a
perturbation whose effect on the barcode grows linearly across the computed
degrees. In particular \(\PH_n\) is Lipschitz but not nonexpansive, so it is
not a \(\V\)-functor for the distance \(\sup|d_X-d_Y|\). For tame modules over
a field, extended by zero to \(\mathbb{R}\), the interleaving distance is the
bottleneck distance \cite{CohenSteinerEdelsbrunnerHarer2007Stability} of the
barcodes of \Cref{thm:barcode}
\cite{BauerLesnick2015InducedMatchings,Lesnick2015InterleavingDistance},
\cite[Ch.~5]{ChazalDeSilvaGlisseOudot2016Structure}, and
\Cref{sec:examples} reports it as such.

\section{The Composite Construction}\label{sec:composite}
\begin{theorem}\label{thm:composite}
Let \(U\) be a quantitative equational theory over a signature
\((\Omega,\ar)\), let \(T_U\colon\Met\to\Met\) be the metric term monad it
induces by \Cref{thm:free}, and let \(n\ge 0\).
\begin{enumerate}
\item\label{itm:comp-ph} The assignment \(A\mapsto\PH_n(T_UA)\) is a functor
  \(\PH^U_n\colon\Met\to\Ab^{\Vfin^{\op}}\), namely the composite
  \[
  \PH^U_n\colon\Met\xrightarrow{\;T_U\;}\Met
       \xrightarrow{\;\Nv(-)^{\le-}\;}\SSet^{\Vfin^{\op}}
       \xrightarrow{\;H_nC_\bullet\;}\Ab^{\Vfin^{\op}}.
  \]
\item\label{itm:comp-gr} For every \(A\) in \(\Met\) and every \(\ell\in[0,\infty)\) the homology
  of \(\gr_\ell C_\bullet(T_UA)\) is the magnitude homology
  \(\MH_{n,\ell}(T_UA)\) of the free algebra, naturally in \(A\).
\end{enumerate}
\end{theorem}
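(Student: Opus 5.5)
The plan is to assemble both parts from functoriality results already proved, so that the only work is checking that the pieces compose and that the identifications are natural in \(A\).

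For part~\Cref{itm:comp-ph}, we check that each arrow of the displayed composite is a functor.
\begin{itemize}
\item The first arrow \(T_U=G_U\Free_U\) is a functor \(\Met\to\Met\) by \Cref{thm:free}\Cref{itm:forgetfulfunctor}, being a composite of a left adjoint and a forgetful functor.
\item The second arrow is the functor \(\Nv(-)^{\le-}\colon\Met\to\SSet^{\Vfin^{\op}}\) of \Cref{lem:filtration}\Cref{itm:filt-nerve}.
\item For the third arrow we note that normalised chains \(C_\bullet\colon\SSet\to\mathrm{Ch}(\Ab)\) form a functor, since a simplicial map commutes with faces and degeneracies and so carries \(D_\bullet\) into \(D_\bullet\). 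Composing with \(H_n\) and applying the result pointwise in \(\ell\), postcomposition gives a functor \(\SSet^{\Vfin^{\op}}\to\Ab^{\Vfin^{\op}}\).
\end{itemize}
It remains to identify the value of the composite. At \(A\) and \(\ell\) it is \(H_n\) of the normalised chains of \(\Nv(T_UA)^{\le\ell}\). By the remark after \eqref{eq:filtered-chains}, this complex is \(F_\ell C_\bullet(T_UA)\), so the value is \(\PH_n(T_UA)(\ell)\). The transition maps agree because both are induced by the same inclusions. On a morphism \(h\colon A\to B\) the composite gives \(\PH_n(T_Uh)\), matching the construction of \(\PH_n(F)\) for nonexpansive \(F=T_Uh\) described before \Cref{cor:degree-zero}.

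For part~\Cref{itm:comp-gr}, we set \(X=T_UA\), which is an object of \(\Met\). Then \(\MH_{n,\ell}(T_UA)=H_n(\gr_\ell C_\bullet(T_UA))\) holds by \Cref{def:magnitude}. \Cref{prop:mc-explicit} identifies this complex with the Leinster–Shulman magnitude complex, and \eqref{eq:splitting} reduces to finite-distance components if a finite base is wanted.

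Naturality in \(A\) is the step needing care. A nonexpansive \(h\) gives \(T_Uh\colon T_UA\to T_UB\), and the chain map \(C_\bullet(T_Uh)\) preserves both \(F_\ell\) and \(F_{<\ell}\) by \Cref{lem:filtration}\Cref{itm:filt-map}. Hence it descends to \(\gr_\ell\), and we check that the induced map coincides with \(\MC_{n,\ell}(T_Uh)\) of \Cref{lem:complex}\Cref{itm:mc-functor}. On a basis tuple \(\mathbf{x}\) with \(\lambda\mathbf{x}=\ell\), the image tuple has length \(\le\ell\). It survives in \(\gr_\ell\) precisely when its length is still \(\ell\), which happens precisely when no step is shortened. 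The tuple must also remain nondegenerate, and this holds because an unshortened step of positive length cannot collapse. In every other case the image is \(0\) in the quotient, which is exactly the rule of \Cref{lem:complex}.

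Functoriality of \(T_U\) then makes \(A\mapsto\MH_{n,\ell}(T_UA)\) the composite functor \(\MH_{n,\ell}\circ T_U\), and the identification is natural in \(A\). The only step beyond bookkeeping is this matching on the associated graded; everything else is composition of functors already established.
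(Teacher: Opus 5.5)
Your proposal is correct and follows the paper's proof. In part~(1) you compose the functors \(T_U\), \(\Nv(-)^{\le-}\) and pointwise \(H_nC_\bullet\), and you identify the value through \(F_\ell C_\bullet(X)\) being the normalised chains of \(\Nv(X)^{\le\ell}\); in part~(2) you apply \Cref{def:magnitude} to \(X=T_UA\) and take naturality from the composite \(\MH_{n,\ell}\circ T_U\). Your basis-level check that the map induced on \(\gr_\ell\) agrees with \(\MC_{n,\ell}(T_Uh)\) is already done in the proof of \Cref{lem:complex}\Cref{itm:mc-functor}, which defines \(\MC_{n,\ell}(F)\) as exactly that induced map, so citing that lemma suffices, as the paper does.
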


\looseness=-1
Over a field \(\mathbb{F}\) put \(\PH^U_n(A;\mathbb{F})(\ell)=H_n\bigl(F_\ell C_\bullet(T_UA)\otimes_{\mathbb{Z}}\mathbb{F}\bigr)\).
When this module is tame, \Cref{thm:barcode} attaches to it a barcode, the
invariant we compute, a function of the axioms of \(U\) and of the generating
space \(A\).

\begin{theorem}\label{thm:theorymap}
Let \(U\subseteq U'\) be quantitative equational theories over one signature
\((\Omega,\ar)\), with units \(\eta^U\) and \(\eta^{U'}\) for the two metric
term monads \(T_U\) and \(T_{U'}\) of \Cref{thm:free}.
\begin{enumerate}
\item\label{itm:tmap-comparison} Every algebra of \(\Mod(U')\) lies in \(\Mod(U)\), so \(\Free_{U'}A\)
  is an object of \(\Mod(U)\) for every \(A\) in \(\Met\). There is one and
  only one natural transformation \(q\colon T_U\Rightarrow T_{U'}\) such that
  \(q_A\circ\eta^U_A=\eta^{U'}_A\) holds and \(q_A\colon T_UA\to T_{U'}A\)
  underlies a morphism \(\Free_UA\to\Free_{U'}A\) of \(\Mod(U)\), for every
  \(A\) in \(\Met\). It is a morphism of monads, and each of its components is
  a surjective and nonexpansive map \(q_A\colon T_UA\twoheadrightarrow T_{U'}A\) of \(\Met\).
\item\label{itm:tmap-homology} Applying \(\PH_n\) to \(q\) gives a natural transformation
  \(\PH_n(q)\colon\PH^U_n\Rightarrow\PH^{U'}_n\).
\item\label{itm:boundedInterleaving} Suppose \(q_A\) is bijective and
  \(d^{T_UA}(t,s)\le d^{T_{U'}A}(q_At,q_As)+\delta\) for all \(t,s\in T_UA\).
  Then \(d_I\bigl(\PH^U_n(A),\PH^{U'}_n(A)\bigr)\le(n+1)\delta\).
\end{enumerate}
\end{theorem}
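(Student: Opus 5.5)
For part \Cref{itm:tmap-comparison} I would argue from the universal property of the free algebra. Since \(U\subseteq U'\), every inference of \(U\) is one of \(U'\), so any algebra satisfying all of \(U'\) satisfies all of \(U\). Hence \(\Mod(U')\) is a full subcategory of \(\Mod(U)\) and \(\Free_{U'}A\) is an object of \(\Mod(U)\). The unit \(\eta^{U'}_A\colon A\to G_{U'}\Free_{U'}A=G_U\Free_{U'}A\) is nonexpansive, so the adjunction \(\Free_U\dashv G_U\) of \Cref{thm:free}\Cref{itm:forgetfulfunctor} gives a unique morphism \(\bar q_A\colon\Free_UA\to\Free_{U'}A\) of \(\Mod(U)\) with \(G_U\bar q_A\circ\eta^U_A=\eta^{U'}_A\). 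Put \(q_A=G_U\bar q_A\). This yields existence and uniqueness at once.

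Naturality in \(A\) follows from uniqueness. For \(h\colon A\to B\), both \(q_B\circ T_Uh\) and \(T_{U'}h\circ q_A\) underlie \(\Mod(U)\)-morphisms \(\Free_UA\to\Free_{U'}B\), and both restrict along \(\eta^U_A\) to \(\eta^{U'}_B\circ h\). The monad-morphism axioms are checked the same way. Compatibility with the units is the defining equation. For compatibility with multiplication, \(q\circ\mu^U\) and \(\mu^{U'}\circ(q*q)\) are both \(\Mod(U)\)-morphisms out of the free algebra \(\Free_UT_UA\), and they agree on its generators.

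Nonexpansiveness is automatic, since \(q_A\) underlies a morphism of \(\QAlg\). For surjectivity I would use the term description of \Cref{thm:free}\Cref{itm:free-metric}: every element of \(T_{U'}A\) is the class of an \(\Omega\)-term over \(A\). The homomorphism \(q_A\) sends the class of a term in \(T_UA\) to the class of the same term in \(T_{U'}A\), because the two agree on generators and \(\Omega\)-homomorphisms commute with the extension \(g^\sharp\) of \Cref{lem:term-monad}. Every class is therefore hit. Equivalently, the image of \(q_A\) is a subalgebra containing \(\eta^{U'}_A(A)\), hence everything. Making this step rigorous is where I expect the main friction. The free algebra over a metric space \(A\) is built from terms over \(\Ob A\) with the axioms of \(A\)'s distances added, so I would need to state the term description over \(A\) rather than over \(\Xi\). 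I would cite the construction in \cite[Thm.~6.1]{MardarePanangadenPlotkin2016QuantitativeAlgebraicReasoning} for that.

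Part \Cref{itm:tmap-homology} follows by whiskering \(q\) with the functor \(\Nv(-)^{\le-}\) of \Cref{lem:filtration}\Cref{itm:filt-nerve} and with \(H_nC_\bullet\), using \Cref{thm:composite}\Cref{itm:comp-ph}. A natural transformation whiskered by functors stays natural, so the components \(\PH_n(q_A)\) assemble into \(\PH^U_n\Rightarrow\PH^{U'}_n\).

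For part \Cref{itm:boundedInterleaving} I would reduce to \Cref{thm:distortion}. Transport the metric of \(T_{U'}A\) along the bijection \(q_A\) to get a space \(Y\) on the set \(T_UA\), isometric to \(T_{U'}A\), so \(\PH_n(Y)\cong\PH^{U'}_n(A)\) by functoriality. Nonexpansiveness of \(q_A\) gives \(d_Y\le d_X+\delta\) with \(X=T_UA\), and the hypothesis gives \(d_X\le d_Y+\delta\). \Cref{thm:distortion} then yields the \((n+1)\delta\)-interleaving. The interleaving distance is invariant under isomorphism of modules, because an isomorphism composed with an interleaving is an interleaving, so the bound transfers to \(\PH^{U'}_n(A)\).
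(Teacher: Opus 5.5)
Your proposal is correct. Outside one step it follows the paper's proof:
\begin{itemize}
\item \(q_A\) is the transpose of \(\eta^{U'}_A\) along \(\Free_U\dashv G_U\);
\item naturality and the multiplication axiom follow from uniqueness of transposes after precomposing with \(\eta^U_A\) and with \(\eta^U_{T_UA}\);
\item part~\Cref{itm:tmap-homology} is the image of the naturality squares under the functor \(\PH_n\);
\item part~\Cref{itm:boundedInterleaving} transports the metric of \(T_{U'}A\) along \(q_A\) and applies \Cref{thm:distortion}. The paper's \(0\)-interleaving plus the triangle inequality of \Cref{prop:interleaving-vcat} is your invariance of \(d_I\) under isomorphism.
\end{itemize}

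The step that differs is surjectivity. You take it from the explicit term construction of the free algebra over \(\Ob A\) in \cite[Thm.~6.1]{MardarePanangadenPlotkin2016QuantitativeAlgebraicReasoning}, so that \(q_A\) sends the class of a term in \(T_UA\) to the class of the same term in \(T_{U'}A\) by uniqueness of homomorphic extensions. The paper never uses that construction. It proves, for any theory \(V\), that \(T_VA\) is generated as an \(\Omega\)-algebra by \(\eta^V_A(A)\), using the adjunction alone:
\begin{itemize}
\item Let \(S\) be the smallest operation-closed subset of \(T_VA\) containing \(\eta^V_A(A)\). With the restricted operations and metric it is again a model of \(V\), because an assignment into \(S\) is an assignment into \(T_VA\) with the same interpretation of terms.
\item The corestriction of \(\eta^V_A\) transposes to a morphism \(\Free_VA\to S\) of \(\Mod(V)\). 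Its composite with the inclusion is \(1_{\Free_VA}\) by uniqueness of transposes, so \(S=T_VA\).
\item The image of \(q_A\) is such a subset of \(T_{U'}A\), so \(q_A\) is surjective.
\end{itemize}

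The paper's route is self-contained and does not depend on which left adjoint is chosen. It also avoids exactly the friction you anticipate, since \Cref{thm:free}\Cref{itm:free-metric} describes terms over \(\Xi\) only. Your route gives an explicit formula for \(q_A\), at the price of importing the term model over \(\Ob A\) from outside the paper.
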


The map \(q_A\) is the semantic effect of adding axioms, and part
\Cref{itm:boundedInterleaving} bounds the motion of the barcode under it. Two
quantities name the two sides of that bound.

\begin{definition}\label{def:impact}
Let \(U\subseteq U'\) be quantitative equational theories over one signature,
let \(A\) be an object of \(\Met\), let \(q_A\colon T_UA\twoheadrightarrow T_{U'}A\)
be the component of \Cref{thm:theorymap} and let \(n\ge0\). If \(q_A\) is
bijective, the \emph{semantic perturbation radius} of the inclusion at \(A\) is
\[
\delta_A(U\subseteq U')=\inf\bigl\{\delta\in[0,\infty)\bigm|
  d^{T_UA}(t,s)\le d^{T_{U'}A}(q_At,q_As)+\delta\ \text{for all}\ t,s\in T_UA\bigr\},
\]
with \(\inf\varnothing=\infty\), and otherwise \(\delta_A(U\subseteq U')=\infty\).
The \emph{degree-\(n\) semantic impact} of the inclusion at \(A\) is
\(\operatorname{Impact}_{n,A}(U\subseteq U')=d_I\bigl(\PH^U_n(A),\PH^{U'}_n(A)\bigr)\).
\end{definition}

\looseness=-1
Every \(\delta\) in the set of \Cref{def:impact} satisfies the hypotheses of
part \Cref{itm:boundedInterleaving}, so
\begin{equation}\label{eq:impact}
\operatorname{Impact}_{n,A}(U\subseteq U')\le(n+1)\,\delta_A(U\subseteq U').
\end{equation}
That set is an intersection of rays \([c,\infty)\), so a finite radius is
attained. Read contrapositively, \(\operatorname{Impact}_{n,A}(U\subseteq U')
>(n+1)\varepsilon\) forces \(\delta_A(U\subseteq U')>\varepsilon\): the axioms
of \(U'\smallsetminus U\) identify two terms of \(T_UA\) or shorten some
distance by more than \(\varepsilon\). A barcode movement of size \(\beta\) at
a bijective \(q_A\) therefore certifies that some distance shrinks by at least
\(\beta/(n+1)\), and a bijective isometry \(q_A\) has radius and impact \(0\).
This is the sense in which the invariant measures axiomatic strength: it
measures how far the added axioms compress the free algebra, a metric property
of the semantics and not the length or difficulty of the derivations behind it,
functorially in \(A\) by \Cref{thm:composite}, robustly by \Cref{thm:distortion},
and by finite linear algebra when both free algebras are finite. It is a test,
not a decision procedure: no converse to \eqref{eq:impact} is proved.

\section{Computations}\label{sec:examples}
All barcodes below are over \(\mathbb{F}_2\) and were computed from the
definitions in \Cref{sec:nerve}. Three independent checks run
against it:\looseness=-1\relax{} \(\PH_0\) against single-linkage clustering, \(\MH_{1,\ell}\)
against its generators, the pairs at distance \(\ell\) with no point between
them, and the barcode endpoints against the critical lengths of
\Cref{cor:critical}. \Cref{fig:pipeline} draws every stage of the pipeline for
the free semilattice on the four marks of a Golomb ruler.

\begin{figure}[t]
\centering
\begin{tikzpicture}[x=1mm,y=1mm,font=\small,
  pt/.style={circle,fill=ink,inner sep=0pt,minimum size=1.5mm},
  sub/.style={circle,draw=inkmute,fill=white,line width=0.35pt,
              inner sep=0pt,minimum size=4.6mm,font=\footnotesize},
  tag/.style={anchor=north west,font=\bfseries\color{accent}},
  blurb/.style={anchor=north,font=\small\color{ink},align=center,text width=30mm},
  op/.style={-{Stealth[length=1.5mm]},accent,line width=0.55pt}]

\useasboundingbox (-7,-97) rectangle (133,-2);

\begin{scope}[shift={(0,0)}]
  \node[tag] at (0,-4) {(a) E1};
  \begin{scope}[shift={(19,-25)},scale=1.15]
    \foreach \k in {0,...,5}{\coordinate (v\k) at (\k*60:11.5);}
    \draw[inkmute,line width=0.35pt,opacity=0.5] (v0)--(v3) (v1)--(v4) (v2)--(v5);
    \draw[complement,line width=0.5pt] (v0)--(v2) (v1)--(v3) (v2)--(v4)
                                       (v3)--(v5) (v4)--(v0) (v5)--(v1);
    \draw[accent,line width=0.85pt] (v0)--(v1)--(v2)--(v3)--(v4)--(v5)--cycle;
    \foreach \k in {0,...,5}{\node[pt] at (v\k) {};}
    \node[font=\footnotesize,accent,fill=white,inner sep=1.1pt]
      at (30:13.6) {$1$};
    \node[font=\footnotesize,complement,fill=white,inner sep=1.1pt]
      at (2.9,5.0) {$\sqrt3$};
    \node[font=\footnotesize,inkmute,fill=white,inner sep=1.1pt]
      at (3.4,0) {$2$};
  \end{scope}
  \node[blurb] at (16,-42) {No operations: $T_UA=A$.};
\end{scope}

\begin{scope}[shift={(33.5,0)}]
  \node[tag] at (0,-4) {(b) E2};
  \begin{scope}[shift={(9,-36)},scale=1.15]
    \node[sub] (A)  at (0,0)    {$a$};
    \node[sub] (B)  at (9,0)    {$b$};
    \node[sub] (C)  at (18,0)   {$c$};
    \node[sub] (AB) at (0,9)    {$ab$};
    \node[sub] (AC) at (9,9)    {$ac$};
    \node[sub] (BC) at (18,9)   {$bc$};
    \node[sub] (T)  at (9,18)   {$abc$};
    \draw[inkmute,line width=0.3pt,opacity=0.7]
      (A)--(AB) (B)--(AB) (A)--(AC) (C)--(AC) (B)--(BC) (C)--(BC)
      (AB)--(T) (AC)--(T) (BC)--(T);
    \draw[op,bend left=30] (A) to
      node[midway,left,font=\footnotesize,accent,inner sep=1pt] {$\vee$} (AB);
  \end{scope}
  \node[blurb] at (16,-42) {Join is union. Every $d_H=1$.};
\end{scope}

\begin{scope}[shift={(68,0)}]
  \node[tag] at (0,-4) {(c) E3};
  \begin{scope}[shift={(6,-15)},scale=1.1]
    \node[sub] (X)  at (0,0)   {$x$};
    \node[sub] (Y)  at (0,-10) {$y$};
    \node[sub] (XY) at (20,-5) {$x{\vee}y$};
    \draw[op,bend right=34] (X) to (XY);
    \draw[op,bend right=24] (Y) to
      node[pos=0.45,below,font=\footnotesize,accent,inner sep=1.4pt] {$\vee$} (XY.205);
    \draw[complement,line width=0.55pt,bend left=34,
          {Stealth[length=1.2mm]}-{Stealth[length=1.2mm]}] (X) to
      node[midway,above,font=\footnotesize,complement,inner sep=1.4pt]
      {$\le\tfrac25$} (XY);
  \end{scope}
  \begin{scope}[shift={(3,-35)}]
    \draw[ink,line width=0.4pt] (0,0)--(29,0);
    \foreach \x/\l in {0/0,20.5/1,29/{\sqrt2}}
      {\draw[ink,line width=0.4pt] (\x,0)--(\x,-1.0);
       \node[font=\footnotesize,below,inner sep=1.2pt] at (\x,-1.0) {$\l$};}
    \foreach \x in {20.5,29}{\fill[inkmute] (\x,3.0) circle[radius=0.7];}
    \foreach \x in {8.2,16.4}{\fill[accent] (\x,6.6) circle[radius=0.7];}
    \node[font=\footnotesize,inkmute,anchor=east,inner sep=1.2pt] at (-0.6,3.0) {$U$};
    \node[font=\footnotesize,accent,anchor=east,inner sep=1.2pt] at (-0.6,6.6) {$U\mathrlap{'}$};
    \draw[{Stealth[length=1.2mm]}-,accent,line width=0.45pt] (8.6,5.9)--(20.1,3.7);
    \draw[{Stealth[length=1.2mm]}-,accent,line width=0.45pt] (16.8,5.9)--(28.6,3.7);
  \end{scope}
  \node[blurb] at (16,-42) {The axiom shortens $1,\sqrt2$ to $\tfrac25,\tfrac45$.};
\end{scope}

\begin{scope}[shift={(100.5,0)}]
  \node[tag] at (0,-4) {(d) E4};
  \begin{scope}[shift={(19,-25)},scale=1.15]
    \foreach \k in {0,...,5}{\coordinate (w\k) at (\k*60:10.5);}
    \foreach \k in {0,...,5}{\fill[accentbright,opacity=0.3] (w\k) circle[radius=2.2];}
    \draw[inkmute,line width=0.4pt,opacity=0.45,dashed]
      (w0)--(w1)--(w2)--(w3)--(w4)--(w5)--cycle;
    \draw[accent,line width=0.7pt]
      ($(w0)+(1.0,0.5)$)--($(w1)+(-0.8,0.9)$)--($(w2)+(0.6,-1.0)$)--
      ($(w3)+(-1.0,-0.4)$)--($(w4)+(0.9,-0.7)$)--($(w5)+(-0.5,1.0)$)--cycle;
    \foreach \k in {0,...,5}{\node[pt] at (w\k) {};}
    \node[font=\footnotesize,accentbright,anchor=east] at (-9.8,-7.6) {$\pm\delta$};
  \end{scope}
  \node[blurb] at (16,-42) {Same points, metric moved by $\delta$.};
\end{scope}

\begin{scope}[shift={(4,-81)}]
  \input{figures/diagrams.tikz}
\end{scope}
\foreach \x/\l in {0/e,33.5/f,67/g,100.5/h}
  {\node[anchor=east,font=\bfseries\color{accent}] at (\x+5.5,-85.6) {(\l)};}

\begin{scope}[shift={(1,-94)}]
  \foreach \k/\c/\n in {0/inkmute/0,1/accent/1,2/complement/2}
    {\fill[\c] (\k*12,0) circle[radius=1.0];
     \node[anchor=west,font=\small\color{ink},inner sep=1.6pt]
       at (\k*12+1.6,0) {$H_{\n}$};}
  \filldraw[inkmute] (37,-1.0)--(39,-1.0)--(38,1.0)--cycle;
  \node[anchor=west,font=\small\color{ink},inner sep=1.6pt] at (39.8,0)
    {Alive at $\ell_{\max}$};
  \draw[accentbright,line width=0.6pt,opacity=0.6] (70,-1.4)--(70,1.4);
  \node[anchor=west,font=\small\color{ink},inner sep=1.6pt] at (71.2,0)
    {$\ell$ with $\mathrm{MH}_{n,\ell}\neq0$};
  \fill[inkmute] (108,0) circle[radius=1.35];
  \node[anchor=west,font=\small\color{ink},inner sep=1.6pt] at (109.8,0)
    {Multiplicity};
\end{scope}

\end{tikzpicture}
\caption{\looseness=-1 The four computations of \Cref{sec:examples}, each algebra above its
diagram. \textbf{(a)}~The hexagon, distance classes \(1,\sqrt3,2\) in three
weights. The signature is empty, so the free algebra is the space.
\textbf{(b)}~The free quantitative semilattice on the equilateral triangle:
the seven nonempty subsets under inclusion, the join is union.
\textbf{(c)}~The axiom \(\vdash x\vee y=_{2/5}x\), read in every model as
\(d(x\vee y,x)\le\tfrac25\), caps the cost of a join, drawn for two elements
\(x\) and \(y\). With commutativity it bounds \(d(x\vee y,y)\) too, and the
triangle inequality through the join gives \(d(x,y)\le\tfrac45\), so the free
semilattice on the square collapses its Hausdorff distances \(1,\sqrt2\), row
\(U\), to \(\tfrac25,\tfrac45\), row \(U'\). \textbf{(d)}~The hexagon of
\textbf{(a)}, each distance moved by at most \(\delta=1/10\): dashed the original,
haloes of radius \(\delta\). \textbf{(e)--(h)}~One point per class, each on a
vertical rule by \Cref{cor:critical}. In \textbf{(e)} every rule carries one,
an equality. The uniform metric of \textbf{(b)} puts every class on
\((n,n+1)\), the axiom of \textbf{(c)} pulls the diagram towards the origin,
the perturbation of \textbf{(d)} scatters \textbf{(e)} by an amount growing
with the degree, and \textbf{(h)} omits its multiplicities, too many to draw
legibly.}
\label{fig:spaces}
\end{figure}

\begin{example}\label{ex:hexagon}
Let \(X\) be the six vertices of a regular hexagon on the unit circle, with the
Euclidean metric, so the distances are \(1,\sqrt3,2\). In degrees \(n\le2\) the
group \(\MH_{n,\ell}(X)\) is nonzero for
\(\ell\in\{0,1,\sqrt3,2,1+\sqrt3,3,2\sqrt3,2+\sqrt3,4\}\) and for no other
length, and every endpoint of the barcode of \(\PH_n(X)\) in those degrees is
one of these lengths or a length carried by degree \(3\), as
\Cref{cor:critical} requires. This is endpoint detection at work.
\end{example}

\begin{example}\label{ex:semilattice}
\looseness=-1
Let \(U\) be the theory of quantitative semilattices, with one binary
\(\vee\) and the axioms of associativity, commutativity and idempotence, and let
\(A\) be the equilateral triangle of side length \(1\). Then \(T_UA\) is the set of
seven nonempty subsets of \(A\) under the Hausdorff metric, and that metric is
\emph{uniform}: \(d(S,T)=1\) whenever \(S\neq T\). Indeed \(d\le 1\) since all
distances in \(A\) are \(1\), and for \(S\neq T\) a point of the symmetric
difference has distance \(1\) to the other set.

\looseness=-1
Consequently \(\lambda\langle S_0,\dots,S_n\rangle=n\) on nondegenerate tuples,
and \(S\preceq S'\preceq S''\) never holds, since \(1+1\neq 1\), so
\Cref{cor:no-betweenness} gives
\(\MH_{n,\ell}(T_UA)=\mathbb{Z}^{7\cdot 6^{n}}\) for \(\ell=n\) and \(0\)
otherwise, while \Cref{lem:digon} produces \(21\) of the degree-one classes, one per pair. The complex \(C_\bullet(T_UA)\) is the normalised chain complex of the
\(0\)-coskeletal simplicial set on seven vertices, which is contractible, so
writing \(b_n\) for the number of finite bars of \(\PH_n\) we get
\(b_n=7\cdot6^{n}-b_{n-1}\) with \(b_{-1}=1\), whence \(b_n=6^{n+1}\). Every bar
is \([n,n+1)\), and \(\PH_0\) carries one further essential bar. For the uniform
space on \(m\) points the same argument gives \((m-1)^{n+1}\) bars, a barcode in
closed form in every degree.
\end{example}

\begin{example}\label{ex:theorymap}
\looseness=-1
Let \(A\) be the unit square and \(U\) again the theory of quantitative
semilattices, so \(T_UA\) has fifteen points and distances \(\{1,\sqrt2\}\). Let
\(U'=U\cup\{(\varnothing,\ x\vee y=_{2/5}x)\}\). Closing the Hausdorff metric
under the axiom, nonexpansiveness of \(\vee\) and the triangle inequality
gives \(T_{U'}A\) with the same fifteen points and distances \(\{2/5,4/5\}\), so
\(q_A\) is bijective and \(\delta_A(U\subseteq U')=\sqrt2-2/5\), the largest
amount by which the axiom shortens a distance. The number of finite bars in
degree \(1\) drops from \(196\) to \(146\), and the bottleneck distance between
the two barcodes, the impact \(\operatorname{Impact}_{n,A}(U\subseteq U')\) of
the single axiom, is \(1/2\) in degrees \(0\) and \(1\), well inside the bounds
\((n+1)\delta_A(U\subseteq U')\) of \eqref{eq:impact}, which are \(1.01\) and
\(2.03\) here.
\end{example}

\begin{example}\label{ex:distortion}
Moving every distance of the hexagon of \Cref{ex:hexagon} by at most
\(\delta=1/10\), in a pattern for which the triangle inequality survives, leaves the number of bars unchanged and moves the barcode by a
bottleneck distance of \(0.082\), \(0.164\) and \(0.246\) in degrees \(0,1,2\).
These sit below the bound \((n+1)\delta\) of \Cref{thm:distortion} and grow
linearly in the degree, exceeding \(\delta\) from degree \(1\) on, so the factor
\(n+1\) of \Cref{thm:distortion} cannot be replaced by a constant below
\(2.45\) uniformly in the degree, the linear growth matches it, and \(\PH_n\)
is Lipschitz but not nonexpansive.
\end{example}

\begin{figure}[tb]
\centering
\input{figures/barcodes.tikz}
\caption{Barcodes of \(\PH_n\) over \(\mathbb{F}_2\) for
\Cref{ex:hexagon,ex:semilattice,ex:theorymap,ex:distortion}, the four
computations of \Cref{fig:spaces}, each panel drawing at most \(18\) bars per
degree, the longest first, colour coding the degree. In E3 the two blocks are
the barcodes before and after the axiom \(\vdash x\vee y=_{2/5}x\), in degrees
\(0\) and \(1\). A bar dotted at its right end has not died by the largest drawn
length \(\ell_{\max}\).}
\label{fig:barcodes}
\end{figure}

\section{What the Invariant Sees}\label{sec:outlook}
\looseness=-1
The barcode is an invariant of the metric semantics a theory presents.
Four limits delimit it, each the consequence of a result proved above:

\begin{enumerate}
\item\looseness=-1 The invariant is blind to slack in the triangle inequality at a single length.
By \Cref{prop:mc-explicit} a face survives on the associated graded only when
\(d(x,y)+d(y,z)=d(x,z)\) holds, so \(\MH_{n,\ell}\) cannot distinguish a space
where the equation almost holds from one where it fails badly. The
filtration repairs this in one direction only: \Cref{thm:les} lets a class
persist across an interval of lengths, and the length of the resulting bar is
the quantitative datum that a single graded piece cannot carry.

\item\looseness=-1 The invariant is not stable in the sense usually asked of persistent homology
\cite{ChazalCohenSteinerGlisseGuibasOudot2009Proximity,ChazalDeSilvaOudot2014PersistenceStability,CohenSteinerEdelsbrunnerHarer2007Stability}.
By \Cref{ex:distortion} no constant below \(2.45\) covers every degree, and the
length accumulates \(n\) summands in degree \(n\), so this is no artefact of the
proof. A degree-independent bound needs a different filtering function, and
the natural candidate, the maximum of the consecutive distances, is the
Vietoris--Rips filtration, whose associated graded is not the magnitude complex.

\item\looseness=-1 The invariant is a function of the theory as much as of the space.
By \Cref{ex:theorymap} the bound \eqref{eq:impact} is far from attained, \(1/2\)
against \(1.01\) and \(2.03\), and whether the gap closes for some class of
theories we leave open. Which monads on \(\Met\) present a variety is settled,
the \(1\)-basic ones \cite[Thm.~40]{Adamek2026OneBasicMonads}, but which
\emph{morphisms} of such monads come from an inclusion of theories, what
\Cref{thm:theorymap} needs, is not, nor over relational structures
\cite[\S6]{JurkaMiliusUrbat2024AlgebraicReasoningRelationalStructures}.

\item\looseness=-1 The invariant is a test, not a decision procedure:
\Cref{thm:theorymap}\Cref{itm:boundedInterleaving} and \eqref{eq:impact} need
\(q_A\) bijective, \Cref{thm:barcode} needs tameness and a field, no converse
to \eqref{eq:impact} is proved, so equal barcodes are not shown to force
isometric free algebras, and what is measured is metric-semantic strength, not
a proof-theoretic quantity.
\end{enumerate}

\looseness=-1
Two problems follow. The filtration of \Cref{sec:nerve} is bounded below,
exhaustive for finite distances and of finitely many values per degree on a
finite space, so it carries a spectral sequence with \(E^1\) page the magnitude
homology, of which \Cref{thm:les} is the exact-couple shadow, abutting to the
homology of the contractible full length nerve, so every magnitude class of
positive degree supports or receives a nonzero differential on some page, and
which page we do not know. And tameness does not follow from finiteness of
the generating space, \(T_UA\) being infinite for many \(U\). A criterion for it
stated on \(U\) rather than on \(T_UA\) would open the barcode
to the theories of chief interest, among them the interpolative barycentric
algebras of Mardare, Panangaden and Plotkin~\cite{MardarePanangadenPlotkin2016QuantitativeAlgebraicReasoning}.

\bibliography{main}

\clearpage
\appendix

\section{Proofs of \texorpdfstring{\Cref{sec:base}}{Section 2}}\label{app:base}

\begin{proof}[Proof of \Cref{lem:base}]~
\begin{itemize}
\item \textbf{Every Diagram Commutes.} Call a
  category \emph{thin} when each of its hom-sets has at most one element. In a
  thin category two morphisms with the same source and the same target are two
  elements of one such set, hence equal. The category \(\V\) is thin by definition. Let \(\varphi\) and \(\psi\) be the composites along two directed paths with source \(a\) and target \(b\) through
  \(c_{1},\dots,c_{m}\) and \(e_{1},\dots,e_{n}\) of \(\Ob\V\). Both are
  elements of \(\V(a,b)\), so they agree:
  \[
  \begin{tikzcd}[row sep=0.9em, column sep=1.7em, cramped]
    & c_{1} \arrow[r] & \cdots \arrow[r] & c_{m} \arrow[dr] & \\
    a \arrow[ur] \arrow[dr]
      \arrow[rrrr, phantom, "{\dnote{\varphi=\psi\ \text{in}\ \V(a,b)}}"]
    & & & & b\mathrlap{\,.} \\
    & e_{1} \arrow[r] & \cdots \arrow[r] & e_{n} \arrow[ur] &
  \end{tikzcd}
  \]
  Every diagram in \(\V\), and every diagram of maps between sets with at most
  one element, therefore commutes. This settles every coherence
  condition: the associativity and unit axioms of a \(\V\)-category,
  the pentagon, the unit triangle, the symmetry and the hexagon, the naturality
  squares of \(\alpha,\lambda,\rho,\sigma\), the commutation of a cone leg with
  the diagram, and the naturality squares of the adjunction in the part
  \emph{Closed} below are all such diagrams.
\item \textbf{Limits and Colimits.} Let \(D\colon J\to\V\) be a diagram with
  \(J\) small and put \(A=\{Dj\mid j\in\Ob J\}\subseteq\Rp\). A cone on \(D\)
  with vertex \(p\) is a family \(\pi_{j}\colon p\to Dj\) with
  \(Du\circ\pi_{j}=\pi_{j'}\) for every \(u\colon j\to j'\) of \(J\). The family
  exists when \(p\ge Dj\) for all \(j\), that is, when
  \(p\ge\sup A\), and the commutation is free. Dually a cocone
  with vertex \(q\) is a family \(\iota_{j}\colon Dj\to q\), which exists
  when \(Dj\ge q\) for all \(j\), that is, when \(\inf A\ge q\). Both
  bounds are such vertices, since \(\sup A\ge Dj\ge\inf A\) for every
  \(j\), and comparison morphisms are unique:
  \[
  \begin{tikzcd}[row sep=1.6em, column sep=2.0em]
    & & p \arrow[d, dashed, "\exists!", "{\dnote{p\ge\sup A}}"'] & & \\[0.7em]
    & & \sup A
        \arrow[dll, bend right=22, "\pi_{j}"']
        \arrow[d, "\pi_{j'}"']
        \arrow[drr, bend left=22, "\pi_{j''}"]
    & & \\
    Dj \arrow[rr, "Du"'] & & Dj' \arrow[rr, "Du'"'] & & Dj'' \\
    & & \inf A \arrow[d, dashed, "\exists!"', "{\dnote{\inf A\ge q}}"] & & \\[0.7em]
    & & q\mathrlap{\,.} & &
    \arrow[from=3-1, to=4-3, bend right=22, "\iota_{j}"']
    \arrow[from=3-3, to=4-3, "\iota_{j'}"']
    \arrow[from=3-5, to=4-3, bend left=22, "\iota_{j''}"]
  \end{tikzcd}
  \]
  The middle row stands for an arbitrary \(D\). Only the set \(A\) enters the
  argument, and not the shape of \(J\). Hence \(\lim_J D=\sup A\) and
  \(\colim_J D=\inf A\). Every subset of \(\Rp\) has a supremum and an infimum in
  \(\Rp\), so \(\V\) is complete and cocomplete. The empty \(J\) gives the
  terminal object \(\sup\varnothing=0\) and the initial object
  \(\inf\varnothing=\infty\).
\item \textbf{Symmetric Monoidal.} A morphism \((u,v)\colon(a,b)\to(a',b')\) of
  \(\V\times\V\) is a pair of relations \(a\ge a'\) and \(b\ge b'\), which add to
  \(a+b\ge a'+b'\). The unique morphism this names is \(u\otimes v\).
  Preservation of identities and of composites is an equation between parallel
  morphisms, so the tensor \(\otimes\colon\V\times\V\to\V\) is a functor of two
  variables:
  \[
  \begin{tikzcd}[row sep=1.3em, column sep=2.8em]
    {(a,b)} \arrow[r, "{(u,v)}"] \arrow[d, mapsto, "\otimes"'] & {(a',b')} \arrow[d, mapsto, "\otimes"]\\
    {a+b} \arrow[r, "{u\otimes v}"'] & {a'+b'}\mathrlap{\,.}
  \end{tikzcd}
  \]
  For all \(a,b,c\in\Ob\V\) the equalities \((a+b)+c=a+(b+c)\), \(0+a=a=a+0\)
  and \(a+b=b+a\) hold in \(\Rp\). The associator \(\alpha_{a,b,c}\), the
  unitors \(\lambda_{a},\rho_{a}\) and the flip map \(\sigma_{a,b}\)
  have equal source and target and are the identity of that object. They are
  natural, each naturality square being a diagram in \(\V\). The pentagon and
  the unit triangle read
  \[
  \begin{gathered}
  \hbox{\small\begin{tikzcd}[row sep=2.0em, column sep=0.4em, cramped]
    & & {(a\otimes b)\otimes(c\otimes d)} \arrow[drr, "{\alpha_{a,b,c\otimes d}}"] & & \\
    {((a\otimes b)\otimes c)\otimes d}
      \arrow[urr, "{\alpha_{a\otimes b,c,d}}"]
      \arrow[dr, "{\alpha_{a,b,c}\otimes 1_{d}}"']
    & & {\dnote{a+b+c+d}} & & {a\otimes(b\otimes(c\otimes d))} \\
    & {(a\otimes(b\otimes c))\otimes d} \arrow[rr, "{\alpha_{a,b\otimes c,d}}"']
    & & {a\otimes((b\otimes c)\otimes d),} \arrow[ur, "{1_{a}\otimes\alpha_{b,c,d}}"'] &
  \end{tikzcd}}\\[1.2ex]
  \begin{tikzcd}[row sep=1.2em, column sep=2.0em, cramped]
    {(a\otimes I)\otimes b} \arrow[rr, "{\alpha_{a,I,b}}"] \arrow[ddr, "{\rho_{a}\otimes 1_{b}}"']
    & & {a\otimes(I\otimes b)} \arrow[ddl, "{1_{a}\otimes\lambda_{b}}"] \\
    & {\dnote{a+b}} & \\
    & {a\otimes b}. &
  \end{tikzcd}
  \end{gathered}
  \]
  Every arrow is the identity of its vertex, so both commute. The same holds
  for \(\sigma_{b,a}\circ\sigma_{a,b}=1_{a\otimes b}\) and for the hexagon.
  Hence \((\V,\otimes,I)\) is symmetric monoidal.
\item \textbf{Closed.} Fix \(a\in\Ob\V\). We claim the adjunction in the form
  of the equivalence
  \begin{equation}\label{eq:closed}
    a+b\ge c\quad\text{if and only if}\quad b\ge c\ominus a,
    \qquad b,c\in\Ob\V,
  \end{equation}
  and we check it separately in each of the three defining cases of
  \eqref{eq:ominus}:
  \begin{itemize}
  \item \(a<c=\infty\), so \(a<\infty\) and \(c\ominus a=\infty\): each side
    holds if and only if \(b=\infty\).
  \item \(c\le a\), so \(c\ominus a=0\): both \(a+b\ge a\ge c\) and \(b\ge 0\)
    hold for every \(b\).
  \item \(a<c<\infty\), so \(a<\infty\) and \(c\ominus a=c-a\): for \(b<\infty\)
    subtract \(a\) on both sides, and for \(b=\infty\) both sides hold.
  \end{itemize}
  \textit{Three consequences of \eqref{eq:closed}.} First,
  \(c\mapsto c\ominus a\) is a functor: taking \(b=c\ominus a\) in
  \eqref{eq:closed} gives \(a+(c\ominus a)\ge c\), so \(c\ge c'\) yields
  \(a+(c\ominus a)\ge c'\), and \eqref{eq:closed} at \(c'\) with the same \(b\)
  returns \(c\ominus a\ge c'\ominus a\). Second, \(\V(a\otimes b,c)\) and
  \(\V(b,c\ominus a)\) are both \(\{\bullet\}\) or both \(\varnothing\), so
  \[
    \theta_{b,c}\colon\V(a\otimes b,c)\longrightarrow\V(b,c\ominus a),
    \qquad\bullet\longmapsto\bullet,
  \]
  is a bijection, the empty one when both sets are empty. Third, all four
  vertices of
  \[
  \begin{tikzcd}[row sep=1.7em, column sep=3.4em]
    {\V(a\otimes b,c)} \arrow[r, "{\theta_{b,c}}"]
      \arrow[d, "{f\,\mapsto\,t\circ f\circ(1_{a}\otimes w)}"']
    & {\V(b,c\ominus a)} \arrow[d, "{g\,\mapsto\,(t\ominus a)\circ g\circ w}"]\\
    {\V(a\otimes b',c')} \arrow[r, "{\theta_{b',c'}}"']
    & {\V(b',c'\ominus a)}
  \end{tikzcd}
  \]
  have at most one element, so the square commutes for every \(w\colon b'\to b\)
  and \(t\colon c\to c'\), and \(\theta\) is natural in both variables. Hence
  \(\V\) is closed, with the adjunction
  \[
  \begin{tikzcd}[column sep=3.6em]
    \V \arrow[r, bend left=28, "{a\otimes(-)}"] \arrow[r, phantom, "{\scriptstyle\bot}"]
    & \V. \arrow[l, bend left=28, "{(-)\ominus a}"]
  \end{tikzcd}
  \]
\item \textbf{Semicartesian.} Every \(a\in\Ob\V\) satisfies \(a\ge 0\), so
  \(\V(a,I)=\{\bullet\}\): a morphism \(a\to I\) exists, and it is unique. The unit \(I=0\) is therefore terminal, in agreement with the empty limit
  \(\sup\varnothing=0\) computed in \emph{Limits and Colimits}, and \(\V\) is
  semicartesian.
\item \textbf{Tensor Against Product.} Let \(J\) be the discrete category on two
  objects and \(D\colon J\to\V\) the diagram with values \(a\) and \(b\). Then
  \(A=\{a,b\}\), so the second item computes the product of two objects as their
  supremum, \(a\times b=\lim_J D=\sup\{a,b\}=\max(a,b)\), with the limit cone, a
  competing pair \(u,v\) and the comparison morphism drawn below:
  \[
  \begin{tikzcd}[row sep=1.9em, column sep=2.4em]
    & p \arrow[dl, bend right=18, "u"'] \arrow[dr, bend left=18, "v"] \arrow[d, dashed, "\exists!"] & \\
    a & {\max(a,b)} \arrow[l, "\pi_{a}"] \arrow[r, "\pi_{b}"'] & b\mathrlap{\,.}
  \end{tikzcd}
  \]
  The projections exist because \(\max(a,b)\ge a\) and \(\max(a,b)\ge b\). A
  pair \(u,v\) as displayed says \(p\ge a\) and \(p\ge b\), hence
  \(p\ge\max(a,b)\), and this relation is the unique comparison morphism.
  At \(a=b=1\) this gives \(1\times 1=\max(1,1)=1\), whereas \(1\otimes 1=1+1=2\).
  The objects \(1\) and \(2\) are not isomorphic in \(\V\), since an isomorphism
  would require \(1\ge 2\).\qedhere
\end{itemize}
\end{proof}

\section{Proofs of \texorpdfstring{\Cref{sec:theories}}{Section 3}}\label{app:theories}

\begin{proof}[Proof of \Cref{lem:term-monad}]~
\begin{enumerate}
\item Realise the coproducts of \eqref{eq:terms}, writing
  \(\iota_0,\iota_1\) for the injections of a binary disjoint union:
  \[
    X\sqcup Y=\bigl(\{0\}\times X\bigr)\cup\bigl(\{1\}\times Y\bigr),
    \qquad
    \bigsqcup_{n\in\mathbb{N}}\Omega_n\times Z^{n}
      =\bigcup_{n\in\mathbb{N}}\Omega_n\times Z^{n},
  \]
  and abbreviating \(\eta_Ww=\iota_0w\) as well as
  \(f(t_1,\dots,t_n)=\iota_1(f,t_1,\dots,t_n)\).
  \begin{itemize}
  \item \textbf{The Summands Are Pairwise Disjoint.} For the binary coproduct and
    for \(m\ne n\),
    \begin{align*}
      \bigl(\{0\}\times X\bigr)\cap\bigl(\{1\}\times Y\bigr)
        &=\bigl(\{0\}\cap\{1\}\bigr)\times\bigl(X\cap Y\bigr)=\varnothing,\\
      \bigl(\Omega_m\times Z^{m}\bigr)\cap\bigl(\Omega_n\times Z^{n}\bigr)
        &=\ar^{-1}\bigl(\{m\}\cap\{n\}\bigr)\times\bigl(Z^{m}\cap Z^{n}\bigr)
         =\varnothing.
    \end{align*}
  \item \textbf{The Grades Increase.} \(T^{0}_\Omega W=\varnothing\subseteq T^{1}_\Omega W\) starts an
    induction. \(Z\mapsto W\sqcup\bigsqcup_n\Omega_n\times Z^n\)
    of \eqref{eq:terms} is monotone in \(Z\), so for \(k\ge 1\) the inclusion
    \(T^{k-1}_\Omega W\subseteq T^{k}_\Omega W\) gives
    \[
      T^{k}_\Omega W
        =W\sqcup\bigsqcup_{n}\Omega_n\times\bigl(T^{k-1}_\Omega W\bigr)^{n}
        \subseteq W\sqcup\bigsqcup_{n}\Omega_n\times\bigl(T^{k}_\Omega W\bigr)^{n}
        =T^{k+1}_\Omega W.
    \]
  \item \textbf{Finite Tuples at a Single Grade.}
    \((t_1,\dots,t_n)\in(T_\Omega W)^{n}\) has \(t_i\in T^{k_i}_\Omega W\) for
    some \(k_i\), and \(n\) is finite, so all \(t_i\) lie in
    \(T^{k}_\Omega W\) for \(k=\max\{k_1,\dots,k_n\}\), with \(\max\varnothing=0\).
    Hence
    \begin{equation}\label{eq:tuples}
      \bigcup_{k\ge 0}\bigl(T^{k}_\Omega W\bigr)^{n}
        =\Bigl(\bigcup_{k\ge 0}T^{k}_\Omega W\Bigr)^{n}
        =\bigl(T_\Omega W\bigr)^{n}.
    \end{equation}
  \item \textbf{Union Is a Fixed Point of \eqref{eq:terms}.} Since \(T^{0}_\Omega W=\varnothing\), the union telescopes,
    \begin{align*}
      T_\Omega W
        &=\bigcup_{k\ge 0}T^{k+1}_\Omega W
         =\bigcup_{k\ge 0}\Bigl(\iota_0W\cup\bigcup_{n}
             \iota_1\bigl(\Omega_n\times(T^{k}_\Omega W)^{n}\bigr)\Bigr)\\
        &=\iota_0W\cup\bigcup_{n}\iota_1\Bigl(\Omega_n\times
             \bigcup_{k\ge 0}\bigl(T^{k}_\Omega W\bigr)^{n}\Bigr)
         \overset{\eqref{eq:tuples}}{=}
         \iota_0W\cup\bigcup_{n}\iota_1\bigl(\Omega_n\times(T_\Omega W)^{n}\bigr)\\
        &=W\sqcup\bigsqcup_{n\in\mathbb{N}}\Omega_n\times\bigl(T_\Omega W\bigr)^{n}.
      \end{align*}
  \item \textbf{Terms Are Read in One Way.} By the fixed
    point, the sets
    \(\iota_0W\) and \(\iota_1(\Omega_n\times(T_\Omega W)^{n})\), \(n\in\mathbb{N}\),
    are pairwise disjoint and cover \(T_\Omega W\), and \(\iota_0\), \(\iota_1\)
    are injective. So \(\eta_W\) is injective, no \(\eta_Ww\) is an
    \(f(t_1,\dots,t_n)\), and \(f(t_1,\dots,t_n)=f'(t'_1,\dots,t'_{n'})\) forces
    \(f=f'\), then \(n=\ar f=n'\), then \(t_i=t'_i\). The operations
    \(f^{T_\Omega W}(t_1,\dots,t_n)=f(t_1,\dots,t_n)\) are therefore well defined
    and make \(T_\Omega W\) an \(\Omega\)-algebra.
  \item \textbf{The Recursion \eqref{eq:extension} Terminates.} Let the grade of
    \(t\) be the least \(k\) with \(t\in T^{k}_\Omega W\). A term of grade
    \(k+1\) lying in the right summand of (\ref{eq:terms}) is \(f(t_1,\dots,t_n)\) with
    \(t_i\in T^{k}_\Omega W\), so each \(t_i\) has grade at most \(k\) and the
    second clause of \eqref{eq:extension} calls \(g^\sharp\) at strictly smaller
    grade. At the base, \(T^{1}_\Omega W=W\sqcup\Omega_0\), because \(\varnothing^{0}\) is a singleton
    and \(\varnothing^{n}=\varnothing\) for \(n\ge 1\), and there the first clause
    fixes \(g^\sharp\) on \(\iota_0W\) while the second at \(n=0\) gives
    \(g^\sharp\bigl(f^{T_\Omega W}(\,)\bigr)=f^{A}(\,)\) with no call at all. Induction on the grade
    therefore defines \(g^\sharp\) on all of \(T_\Omega W\) and determines it
    uniquely.
  \end{itemize}
\item \textbf{Freeness.} The two clauses of \eqref{eq:extension} say that \(g^\sharp\circ\eta_W=g\)
  and, since \(f^{T_\Omega W}=f\), that \(g^\sharp\) is an
  \(\Omega\)-homomorphism. An \(\Omega\)-homomorphism \(h\) with
  \(h\circ\eta_W=g\) satisfies both clauses, so \(h=g^\sharp\) by
  \Cref{itm:tm-grades}. That universal property is the freeness asserted here.
\item \textbf{Functoriality.} Each \(\leadsto\) below is an instance of \Cref{itm:tm-free}: its
  two sides are \(\Omega\)-homomorphisms, being a \((-)^\sharp\), an identity or
  a composite. The line above it compares them after \(\eta_W\).
  \begin{align}
    T_\Omega r\circ\eta_W
      &=(\eta_{W'}\circ r)^\sharp\circ\eta_W
       \overset{\eqref{eq:extension}}{=}\eta_{W'}\circ r ,\label{eq:eta-nat}\\
    T_\Omega r\,f(t_1,\dots,t_n)
      &\overset{\eqref{eq:extension}}{=}
       f^{T_\Omega W'}\bigl(T_\Omega r\,t_1,\dots,T_\Omega r\,t_n\bigr)
       \overset{\cref{itm:tm-grades}}{=}
       f\bigl(T_\Omega r\,t_1,\dots,T_\Omega r\,t_n\bigr).\\\nonumber
    T_\Omega 1_W\circ\eta_W
      &\overset{\eqref{eq:eta-nat}}{=}\eta_W=1_{T_\Omega W}\circ\eta_W,\\
    \leadsto\ T_\Omega 1_W
      &\overset{\cref{itm:tm-free}}{=}1_{T_\Omega W},\nonumber\\[3pt]
    T_\Omega r'\circ T_\Omega r\circ\eta_W
      &\overset{\eqref{eq:eta-nat}}{=}T_\Omega r'\circ\eta_{W'}\circ r
       \overset{\eqref{eq:eta-nat}}{=}\eta_{W''}\circ r'\circ r
       \overset{\eqref{eq:eta-nat}}{=}T_\Omega(r'\circ r)\circ\eta_W,\\
    \leadsto\ T_\Omega r'\circ T_\Omega r
      &\overset{\cref{itm:tm-free}}{=}T_\Omega(r'\circ r).\nonumber
  \end{align}
\item \textbf{Monadicity.}
\begin{align}
    \mu_W\circ\eta_{T_\Omega W}
      &=(1_{T_\Omega W})^\sharp\circ\eta_{T_\Omega W}
       \overset{\eqref{eq:extension}}{=}1_{T_\Omega W},\label{eq:unit-l}\\[2pt]
    \mu_W\circ T_\Omega\eta_W\circ\eta_W
      &\overset{\eqref{eq:eta-nat}}{=}\mu_W\circ\eta_{T_\Omega W}\circ\eta_W
       \overset{\eqref{eq:unit-l}}{=}\eta_W
       =1_{T_\Omega W}\circ\eta_W,\nonumber\\
    \leadsto\ \mu_W\circ T_\Omega\eta_W
      &\overset{\cref{itm:tm-free}}{=}1_{T_\Omega W}.\label{eq:unit-r}\\[2pt]
    \mu_W\circ\mu_{T_\Omega W}\circ\eta_{T_\Omega T_\Omega W}
      &\overset{\eqref{eq:unit-l}}{=}\mu_W
       \overset{\eqref{eq:unit-l}}{=}\mu_W\circ\eta_{T_\Omega W}\circ\mu_W
       \overset{\eqref{eq:eta-nat}}{=}
         \mu_W\circ T_\Omega\mu_W\circ\eta_{T_\Omega T_\Omega W},\nonumber\\
    \leadsto\ \mu_W\circ\mu_{T_\Omega W}
      &\overset{\cref{itm:tm-free}}{=}\mu_W\circ T_\Omega\mu_W.\label{eq:assoc}\\[2pt]
    \mu_{W'}\circ T_\Omega T_\Omega r\circ\eta_{T_\Omega W}
      &\overset{\eqref{eq:eta-nat}}{=}
       \mu_{W'}\circ\eta_{T_\Omega W'}\circ T_\Omega r
       \overset{\eqref{eq:unit-l}}{=}T_\Omega r
       \overset{\eqref{eq:unit-l}}{=}T_\Omega r\circ\mu_W\circ\eta_{T_\Omega W},\nonumber\\
    \leadsto\ \mu_{W'}\circ T_\Omega T_\Omega r
      &\overset{\cref{itm:tm-free}}{=}T_\Omega r\circ\mu_W .\label{eq:mu-nat}
  \end{align}
  By \eqref{eq:eta-nat} and \eqref{eq:mu-nat} both \(\eta\) and \(\mu\) are
  natural, so \eqref{eq:unit-l}, \eqref{eq:unit-r} and \eqref{eq:assoc} are
  identities between natural transformations of endofunctors of \(\Set\), and the
  graphical calculus draws them. Read a diagram from top to bottom and its wires
  from left to right as the composite is written, so that the left wire of
  \(T_\Omega\circ T_\Omega\) is the outer factor. The region is \(\Set\), a wire
  is \(T_\Omega\), a dot with nothing above it is \(\eta\), and a merge of two
  wires is \(\mu\):
  \begin{center}
  \renewcommand\sdxunit{1.06}\renewcommand\sdyunit{1.06}
  \(
  \begin{sdiag}{2}
    \sdcanvas{sdcodomain}{-0.45}{1.45}
    \sdlink{0}{\sdgoldup}{0.5}{\sdgold}\sdvertex{0}{\sdgoldup}{\eta}{left}
    \sdprong{1}{0.5}{\sdgold}\sdstem{0.5}{\sdgold}{0.5}
    \sdvertexplain{0.5}{\sdgold}\sdside{0.5}{\sdgold}{below right}{\mu}
    \sdtop{1}{T_\Omega}\sdbot{0.5}{T_\Omega}\sdcat{-0.24}{\Set}
  \end{sdiag}
  \sdeq{\eqref{eq:unit-l}}
  \begin{sdiag}{2}
    \sdcanvas{sdcodomain}{-0.28}{0.28}
    \sdwire{0}{0}\sdtop{0}{T_\Omega}\sdbot{0}{T_\Omega}
  \end{sdiag}
  \sdeq{\eqref{eq:unit-r}}
  \begin{sdiag}{2}
    \sdcanvas{sdcodomain}{-0.45}{1.45}
    \sdlink{1}{\sdgoldup}{0.5}{\sdgold}\sdvertex{1}{\sdgoldup}{\eta}{right}
    \sdprong{0}{0.5}{\sdgold}\sdstem{0.5}{\sdgold}{0.5}
    \sdvertexplain{0.5}{\sdgold}\sdside{0.5}{\sdgold}{below left}{\mu}
    \sdtop{0}{T_\Omega}\sdbot{0.5}{T_\Omega}\sdcat{1.24}{\Set}
  \end{sdiag}
  \,,
  \)

  \(
  \begin{sdiag}{3}
    \sdcanvas{sdcodomain}{-0.55}{2.05}
    \sdprong{0}{0.5}{\sdlvl{0.38}}\sdprong{1}{0.5}{\sdlvl{0.38}}
    \sdvertexplain{0.5}{\sdlvl{0.38}}\sdside{0.5}{\sdlvl{0.38}}{below left}{\mu}
    \sdlink{0.5}{\sdlvl{0.38}}{1}{\sdlvl{0.72}}%
    \sdprong{1.5}{1}{\sdlvl{0.72}}
    \sdstem{1}{\sdlvl{0.72}}{1}
    \sdvertexplain{1}{\sdlvl{0.72}}\sdside{1}{\sdlvl{0.72}}{below right}{\mu}
    \sdtop{0}{T_\Omega}\sdtop{1}{T_\Omega}\sdtop{1.5}{T_\Omega}\sdbot{1}{T_\Omega}
    \sdcat{-0.34}{\Set}
  \end{sdiag}
  \sdeq{\eqref{eq:assoc}}
  \begin{sdiag}{3}
    \sdcanvas{sdcodomain}{-0.05}{2.55}
    \sdprong{2}{1.5}{\sdlvl{0.38}}\sdprong{1}{1.5}{\sdlvl{0.38}}
    \sdvertexplain{1.5}{\sdlvl{0.38}}\sdside{1.5}{\sdlvl{0.38}}{below right}{\mu}
    \sdlink{1.5}{\sdlvl{0.38}}{1}{\sdlvl{0.72}}
    \sdprong{0.5}{1}{\sdlvl{0.72}}
    \sdstem{1}{\sdlvl{0.72}}{1}
    \sdvertexplain{1}{\sdlvl{0.72}}\sdside{1}{\sdlvl{0.72}}{below left}{\mu}
    \sdtop{0.5}{T_\Omega}\sdtop{1}{T_\Omega}\sdtop{2}{T_\Omega}\sdbot{1}{T_\Omega}
    \sdcat{2.34}{\Set}
  \end{sdiag}
  \,.
  \)
  \end{center}
  So \((T_\Omega,\eta,\mu)\) is a monad. Finally, for a map
  \(g\colon W\to T_\Omega W'\) of sets,
  \begin{align*}
    \mu_{W'}\circ T_\Omega g\circ\eta_W
      &\overset{\eqref{eq:eta-nat}}{=}\mu_{W'}\circ\eta_{T_\Omega W'}\circ g
       \overset{\eqref{eq:unit-l}}{=}g
       \overset{\eqref{eq:extension}}{=}g^\sharp\circ\eta_W\\
    \leadsto\ \mu_{W'}\circ T_\Omega g
      &\overset{\cref{itm:tm-free}}{=}g^\sharp .\qedhere
  \end{align*}
\end{enumerate}
\end{proof}

\section{Proofs of \texorpdfstring{\Cref{sec:nerve}}{Section 4}}\label{app:nerve}

\begin{proof}[Proof of \Cref{lem:filtration}]~
\begin{enumerate}
\item\label{itm:len-ops} Throughout, \(\mathbf{x}\in\Nv(X)_n\), and \(n\ge 1\)
  wherever a face occurs, since \(d_i\) is defined on \(\Nv(X)_n\) only then.
  The first item below bounds the length under an arbitrary reindexing, and the
  two after it evaluate the length on the faces and on the degeneracies.
  \begin{itemize}
  \item \textbf{Reindexing.} Let \(\alpha\colon[m]\to[n]\) in \(\Delta\).
    Iterating the triangle inequality along \(x_a,x_{a+1},\dots,x_b\) gives
    \(d(x_a,x_b)\le\sum_{k=a+1}^{b}d(x_{k-1},x_k)\) for \(a\le b\), and
    \(\alpha\) is order-preserving, so the index blocks
    \(\{\alpha(j-1)+1,\dots,\alpha(j)\}\) for \(1\le j\le m\) are pairwise
    disjoint subsets of \(\{1,\dots,n\}\). Summing that inequality over these
    blocks, and adding the summands of \(\lambda\mathbf{x}\) outside them, all of
    which are nonnegative, gives
    \begin{equation}\label{eq:reindex}
    \begin{aligned}
      \lambda(\mathbf{x}\circ\alpha)
        &=\sum_{j=1}^{m}d\bigl(x_{\alpha(j-1)},x_{\alpha(j)}\bigr) \le\sum_{j=1}^{m}\;\sum_{k=\alpha(j-1)+1}^{\alpha(j)}d(x_{k-1},x_k)\\
        &\le\sum_{k=1}^{n}d(x_{k-1},x_k)=\lambda\mathbf{x}.
    \end{aligned}
    \end{equation}
    By \eqref{eq:nerve} every face and every degeneracy is of the form
    \(\mathbf{x}\mapsto\mathbf{x}\circ\alpha\), the cases \(\alpha=\delta^i\)
    and \(\alpha=\sigma^i\) of \eqref{eq:cofaces}, so \eqref{eq:reindex} gives
    \(\lambda(d_i\mathbf{x})\le\lambda\mathbf{x}\) and
    \(\lambda(s_i\mathbf{x})\le\lambda\mathbf{x}\) at once. The next two items
    compute both lengths, which the statement needs for \(s_i\).
  \item \textbf{Faces.} By \eqref{eq:cofaces} the entries of \(d_i\mathbf{x}\)
    are \(x_0,\dots,x_{i-1},x_{i+1},\dots,x_n\), so
    \begin{equation}\label{eq:face-length}
    \begin{aligned}
      \lambda\mathbf{x}&=\sum_{j=1}^{n}d(x_{j-1},x_j),\\[0.4ex]
      \lambda(d_i\mathbf{x})&=
      \begin{cases}
        \displaystyle\sum_{j=2}^{n}d(x_{j-1},x_j),
          &i=0,\\[2.4ex]
        \displaystyle\sum_{j=1}^{i-1}d(x_{j-1},x_j)+d(x_{i-1},x_{i+1})
          +\sum_{j=i+2}^{n}d(x_{j-1},x_j),
          &0<i<n,\\[2.4ex]
        \displaystyle\sum_{j=1}^{n-1}d(x_{j-1},x_j),
          &i=n.
      \end{cases}
    \end{aligned}
    \end{equation}
    The outer faces drop one summand of \(\lambda\mathbf{x}\), namely
    \(d(x_0,x_1)\) for \(i=0\) and \(d(x_{n-1},x_n)\) for \(i=n\), so the first
    and the third case of \eqref{eq:face-length} may be written as
    \[
      \lambda\mathbf{x}=d(x_0,x_1)+\lambda(d_0\mathbf{x}),
      \qquad
      \lambda\mathbf{x}=\lambda(d_n\mathbf{x})+d(x_{n-1},x_n).
    \]
    An inner face, \(0<i<n\), instead replaces the two summands
    \(d(x_{i-1},x_i)\) and \(d(x_i,x_{i+1})\) by the single summand
    \(d(x_{i-1},x_{i+1})\), while all remaining summands agree. Addition on
    \(\Rp\) is monotone and
    \(d(x_{i-1},x_{i+1})\le d(x_{i-1},x_i)+d(x_i,x_{i+1})\), so
    \(\lambda(d_i\mathbf{x})\le\lambda\mathbf{x}\) in each of the three cases.
    If \(\lambda\mathbf{x}<\infty\), then every summand is finite, and
    cancelling the summands common to both lines of \eqref{eq:face-length}
    leaves the defect of the triangle inequality at the middle index
    \(i\), which is what the face \(d_i\) contracts,
    \begin{equation}\label{eq:face-defect}
      \lambda\mathbf{x}-\lambda(d_i\mathbf{x})
        =d(x_{i-1},x_i)+d(x_i,x_{i+1})-d(x_{i-1},x_{i+1})\ge 0
        \qquad(0<i<n),
    \end{equation}
    so an inner face preserves the length if and only if
    \(d(x_{i-1},x_{i+1})=d(x_{i-1},x_i)+d(x_i,x_{i+1})\). For \(n=1\) only the
    outer faces \(i=0\) and \(i=n\) occur.
  \item \textbf{Degeneracies.} By \eqref{eq:cofaces} the entries of
    \(s_i\mathbf{x}\) are \(x_0,\dots,x_i,x_i,\dots,x_n\), so
    \[
      \lambda(s_i\mathbf{x})
        =\sum_{j=1}^{i}d(x_{j-1},x_j)+d(x_i,x_i)+\sum_{j=i+1}^{n}d(x_{j-1},x_j)
        =\lambda\mathbf{x}+d(x_i,x_i)
        =\lambda\mathbf{x},
    \]
    because \(d(x,x)=0\) in every object of \(\Met\), by the unit axiom of a
    \(\V\)-category.
  \end{itemize}
\item\label{itm:len-sub} Let \(\ell\le\ell'\) in \([0,\infty)\), the set of
  objects of \(\Vfin^{\op}\), with sublevel sets as in \eqref{eq:length}.
  \begin{itemize}
  \item \textbf{Sublevel Sets Are Simplicial Subsets.} For every
    \(\alpha\colon[m]\to[n]\) of \(\Delta\) and every
    \(\mathbf{x}\in\bigl(\Nv(X)^{\le\ell}\bigr)_n\), the reindexed simplex has
    length at most \(\ell\) as well, since
    \[
      \lambda\bigl(\Nv(X)(\alpha)\mathbf{x}\bigr)
        =\lambda(\mathbf{x}\circ\alpha)
        \overset{\eqref{eq:reindex}}{\le}\lambda\mathbf{x}\le\ell ,
    \]
    so \(\Nv(X)(\alpha)\) carries \(\bigl(\Nv(X)^{\le\ell}\bigr)_n\) into
    \(\bigl(\Nv(X)^{\le\ell}\bigr)_m\) for every \(\alpha\), and
    \(\Nv(X)^{\le\ell}\) is a subfunctor of \(\Nv(X)\). The cases
    \(\alpha=\delta^i\) and \(\alpha=\sigma^i\) are all faces and all
    degeneracies, so this one computation settles the whole simplicial structure at
    once.
  \item \textbf{The Inclusions.} From \(\lambda\mathbf{x}\le\ell\le\ell'\) we get
    \(\bigl(\Nv(X)^{\le\ell}\bigr)_n\subseteq\bigl(\Nv(X)^{\le\ell'}\bigr)_n\) for every \(n\). Write
    \(\iota_{\ell,\ell'}\colon\Nv(X)^{\le\ell}\to\Nv(X)^{\le\ell'}\) for the
    inclusion. It is simplicial, because both sides carry the restrictions of
    the operators of \(\Nv(X)\), which the previous part provides.
  \item \textbf{Functoriality.} The category \(\Vfin^{\op}\) is a poset: it has one
    morphism \(\ell\to\ell'\) when \(\ell\le\ell'\) and none otherwise, so a
    functor out of it is an assignment on objects together with one on morphisms
    that respects identities and composites, and the pair of
    \[
      \Nv(X)^{\le-}(\ell)=\Nv(X)^{\le\ell},
      \qquad
      \Nv(X)^{\le-}(\ell\le\ell')=\iota_{\ell,\ell'},
    \]
    is well defined on both counts. It preserves identities and composites,
    \[
      \iota_{\ell,\ell}=1_{\Nv(X)^{\le\ell}},
      \qquad
      \iota_{\ell',\ell''}\circ\iota_{\ell,\ell'}=\iota_{\ell,\ell''}
      \quad(\ell\le\ell'\le\ell''),
    \]
    the first because the inclusion of a subset into itself is its identity, the
    second because a composite of inclusions of subsets is again an inclusion.
    Hence \(\Nv(X)^{\le-}\colon\Vfin^{\op}\to\SSet\) is a functor, and it is the
    filtration of \(\Nv(X)\) by length.
  \end{itemize}
\item\label{itm:len-map} Let \(F\colon X\to Y\) be nonexpansive and put
  \(\Nv(F)\mathbf{x}=f\circ\mathbf{x}\), which on entries is the map
  \(\langle x_0,\dots,x_n\rangle\mapsto\langle fx_0,\dots,fx_n\rangle\) appearing in
  the statement of the lemma.
  \begin{itemize}
  \item \textbf{\(\Nv(F)\) Is Simplicial.} For every \(\alpha\colon[m]\to[n]\)
    of \(\Delta\) and \(\mathbf{x}\in\Nv(X)_n\), associativity gives
    \[
      \Nv(F)\bigl(\Nv(X)(\alpha)\mathbf{x}\bigr)
        \overset{\eqref{eq:nerve}}{=}f\circ(\mathbf{x}\circ\alpha)
        =(f\circ\mathbf{x})\circ\alpha
        \overset{\eqref{eq:nerve}}{=}\Nv(Y)(\alpha)\bigl(\Nv(F)\mathbf{x}\bigr).
    \]
    The cases \(\alpha=\delta^i\) and \(\alpha=\sigma^i\) are all faces and all
    degeneracies, so this one computation gives \(d_i\Nv(F)=\Nv(F)d_i\) and
    \(s_i\Nv(F)=\Nv(F)s_i\) for every \(i\), and \(\Nv(F)\) is a simplicial map
    \(\Nv(X)\to\Nv(Y)\) between the underlying simplicial sets.
  \item \textbf{\(\Nv(F)\) Does Not Increase the Length.} Nonexpansiveness of
    \(F\) is \(d(fx,fx')\le d(x,x')\) for all \(x,x'\in\Ob X\). Summing this
    inequality over the \(n\) consecutive pairs of entries of \(\mathbf{x}\), and
    using that addition on \(\Rp\) is monotone, we obtain the estimate
    \[
      \lambda\bigl(\Nv(F)\mathbf{x}\bigr)
        =\sum_{j=1}^{n}d(fx_{j-1},fx_j)
        \le\sum_{j=1}^{n}d(x_{j-1},x_j)
        =\lambda\mathbf{x},
    \]
    so \(\Nv(F)\) restricts to
    \(\Nv(F)^{\le\ell}\colon\Nv(X)^{\le\ell}\to\Nv(Y)^{\le\ell}\) for every
    \(\ell\), and these restrictions are again simplicial maps, being restrictions
    of a simplicial map to simplicial subsets.
  \item \textbf{Naturality in \(\ell\).} The restrictions \(\Nv(F)^{\le\ell}\)
    commute with the inclusions of the filtration in \(X\) and in \(Y\).
    Writing \(\iota_{\ell',\infty}\) for the inclusion of the sublevel set at
    \(\ell'\) into the nerve itself, the sublevel set at \(\ell=\infty\) whose
    condition \(\lambda\mathbf{x}\le\infty\) is void, this is the
    commutativity in \(\SSet\) of the two squares below, which share their
    middle column,
    \[
    \begin{tikzcd}[row sep=2.0em, column sep=3.0em]
      \Nv(X)^{\le\ell} \arrow[r, hook, "\iota_{\ell,\ell'}"]
        \arrow[d, "{\Nv(F)^{\le\ell}}"'] &
      \Nv(X)^{\le\ell'} \arrow[r, hook, "\iota_{\ell',\infty}"]
        \arrow[d, "{\Nv(F)^{\le\ell'}}"] &
      \Nv(X) \arrow[d, "{\Nv(F)}"]\\
      \Nv(Y)^{\le\ell} \arrow[r, hook, "\iota_{\ell,\ell'}"'] &
      \Nv(Y)^{\le\ell'} \arrow[r, hook, "\iota_{\ell',\infty}"'] &
      \Nv(Y)\mathrlap{\,.}
    \end{tikzcd}
    \]
    Every horizontal arrow is an inclusion of a subset and every
    vertical arrow is a restriction of the one map \(\Nv(F)\), so the two
    composites of either square send a simplex \(\mathbf{x}\) of its upper left
    corner to \(f\circ\mathbf{x}\). Both squares therefore commute, and
    \(\Nv(F)^{\le-}\colon\Nv(X)^{\le-}\Rightarrow\Nv(Y)^{\le-}\) is a natural
    transformation of functors \(\Vfin^{\op}\to\SSet\).
  \end{itemize}
\item\label{itm:len-functor} Let \(G\colon Y\to Z\) in \(\Met\). For every
  \(n\ge 0\) and every \(\mathbf{x}\in\Nv(X)_n\), associativity and unitality of
  composition of maps give the two identities
  \[
    \Nv(1_X)\mathbf{x}=1_{\Ob X}\circ\mathbf{x}=\mathbf{x},
    \qquad
    \Nv(GF)\mathbf{x}=(g\circ f)\circ\mathbf{x}
      =g\circ(f\circ\mathbf{x})=\Nv(G)\bigl(\Nv(F)\mathbf{x}\bigr),
  \]
  and both identities restrict to the sublevel sets, since every map occurring
  there is a restriction of \(\Nv(1_X)\) or of \(\Nv(GF)\). Hence
  \(\Nv(-)^{\le-}\colon\Met\to\SSet^{\Vfin^{\op}}\) is a functor.\qedhere
\end{enumerate}
\end{proof}

\begin{proof}[Proof of \Cref{cor:degree-zero}]~
\begin{itemize}
\item \textbf{Chains in Degrees Zero and One.} A one-tuple is nondegenerate,
  the condition \(x_{i-1}\neq x_i\) for \(1\le i\le 0\) being empty, and its
  length is the empty sum
  \(\lambda\langle x\rangle=\sum_{i=1}^{0}d(x_{i-1},x_i)=0\le\ell\). A tuple
  \(\langle x,y\rangle\) is nondegenerate when \(x\neq y\), and
  \(\lambda\langle x,y\rangle=d(x,y)\). \eqref{eq:filtered-chains} reads
  \[
    F_\ell C_0(X)=\bigl\langle\langle x\rangle\bigm|x\in\Ob X\bigr\rangle_{\mathbb{Z}},
    \qquad
    F_\ell C_1(X)=\bigl\langle\langle x,y\rangle\bigm|
      x\neq y,\ d(x,y)\le\ell\bigr\rangle_{\mathbb{Z}},
  \]
  and both are free abelian, the first on \(\Ob X\) and the second on the set of
  those ordered pairs, each being spanned by a subset of the basis of
  nondegenerate tuples.
\item \textbf{The Quotient.} There are no chains in degree \(-1\), so
  \(\PH_0(X)(\ell)=F_\ell C_0(X)/\partial F_\ell C_1(X)\). On a generator of
  \(F_\ell C_1(X)\) the differential \(\partial=d_0-d_1\) gives
  \(\partial\langle x,y\rangle=\langle y\rangle-\langle x\rangle\), whence
  \begin{equation}\label{eq:ph0-quotient}
    \PH_0(X)(\ell)=\mathbb{Z}[\Ob X]\Big/
      \bigl\langle\langle y\rangle-\langle x\rangle\bigm|
        x\neq y,\ d(x,y)\le\ell\bigr\rangle_{\mathbb{Z}} .
  \end{equation}
\item \textbf{The Components.} Let \(G_\ell(X)\) be the graph with vertex set
  \(\Ob X\) and an edge \(xy\) whenever \(x\neq y\) and \(d(x,y)\le\ell\). The
  distance of \(\Met\) is symmetric, so this condition does not depend on the
  order of the two points. Write \(R\) for the subgroup of relations divided
  out in \eqref{eq:ph0-quotient} and \([x]\in\pi_0G_\ell(X)\) for the
  component of \(x\). All but finitely many of the integers \(a_x\) below are
  zero, and extending the values \(p\langle x\rangle=[x]\) additively from the
  one-tuple basis of \(\mathbb{Z}[\Ob X]\) defines the homomorphism of abelian
  groups
  \[
    p\colon\mathbb{Z}[\Ob X]\longrightarrow\mathbb{Z}[\pi_0G_\ell(X)],
    \qquad
    p\Bigl(\sum_{x\in\Ob X}a_x\langle x\rangle\Bigr)
      =\sum_{x\in\Ob X}a_x[x].
  \]
  \begin{itemize}
  \item \textbf{Surjectivity.} Every component \(c\in\pi_0G_\ell(X)\) is
    nonempty. Choosing a point \(x_c\in c\) for each \(c\) gives
    \(p\langle x_c\rangle=[x_c]=c\), so a general element of the target is
    hit, \(\sum_{c}b_c\,c=p\bigl(\sum_{c}b_c\langle x_c\rangle\bigr)\), and
    \(p\) is surjective.
  \item \textbf{Every Relation Dies.} A generator of \(R\) is
    \(\langle y\rangle-\langle x\rangle\) with \(x\neq y\) and
    \(d(x,y)\le\ell\). Then \(xy\) is an edge of \(G_\ell(X)\), its two
    endpoints lie in one component, \([y]=[x]\), and
    \(p(\langle y\rangle-\langle x\rangle)=[y]-[x]=0\). Hence
    \(R\subseteq\ker p\), and \(p\) descends to a surjection
    \(\bar p\colon\PH_0(X)(\ell)\twoheadrightarrow\mathbb{Z}[\pi_0G_\ell(X)]\),
    \([\langle x\rangle]\mapsto[x]\), on the quotient \eqref{eq:ph0-quotient}.
  \item \textbf{The Kernel Is No Larger.} The components form a basis of the
    target, so
    \(p\bigl(\sum_{x}a_x\langle x\rangle\bigr)=\sum_{c}\bigl(\sum_{x\in c}a_x\bigr)c\)
    vanishes if and only if \(\sum_{x\in c}a_x=0\) for every component \(c\).
    For an element of \(\ker p\) these sums vanish, so subtracting
    \(\sum_{c}\bigl(\sum_{x\in c}a_x\bigr)\langle x_c\rangle=0\) leaves the
    element unchanged and regroups it, one component at a time, as
    \[
      \sum_{x\in\Ob X}a_x\langle x\rangle
        =\sum_{c\in\pi_0G_\ell(X)}\sum_{x\in c}
          a_x\bigl(\langle x\rangle-\langle x_c\rangle\bigr).
    \]
    Each difference on the right has \(x\) and \(x_c\) in one component, and
    along a path \(x_c=v_0,v_1,\dots,v_k=x\) of \(G_\ell(X)\) it telescopes
    into relations,
    \[
      \langle x\rangle-\langle x_c\rangle
        =\sum_{r=1}^{k}\bigl(\langle v_r\rangle-\langle v_{r-1}\rangle\bigr)
        =\partial\sum_{r=1}^{k}\langle v_{r-1},v_r\rangle\in R,
    \]
    the tuples \(\langle v_{r-1},v_r\rangle\) lying in \(F_\ell C_1(X)\)
    because consecutive path vertices are distinct and at distance at most
    \(\ell\). So \(\ker p\subseteq R\), the two subgroups agree, and \(\bar p\)
    is injective with kernel \(\ker p/R=0\). The map \(\bar p\) is therefore an
    isomorphism, and \(\PH_0(X)(\ell)\cong\mathbb{Z}[\pi_0G_\ell(X)]\) is free
    abelian on the components of \(G_\ell(X)\).
  \end{itemize}
\item \textbf{The Vietoris--Rips Complex.} For \(\ell\in[0,\infty)\) let
  \(\VR_\ell(X)\) be the simplicial complex with vertex set \(\Ob X\) whose
  simplices are the finite nonempty subsets \(\sigma\subseteq\Ob X\) of diameter
  at most \(\ell\), that is, with \(d(x,y)\le\ell\) for all \(x,y\in\sigma\).
  Its \(1\)-skeleton is computed from the definition. A singleton \(\{x\}\) has
  diameter \(d(x,x)=0\le\ell\), so the vertices of \(\VR_\ell(X)\) are all of
  \(\Ob X\), the vertex set of \(G_\ell(X)\). A pair \(\{x,y\}\) with
  \(x\neq y\) has diameter
  \(\max\{d(x,x),d(x,y),d(y,x),d(y,y)\}=d(x,y)\), the distance being symmetric
  and zero on the diagonal, so \(\{x,y\}\) is a \(1\)-simplex of
  \(\VR_\ell(X)\) if and only if \(d(x,y)\le\ell\), that is, if and only if
  \(xy\) is an edge of \(G_\ell(X)\). The \(1\)-skeleton of \(\VR_\ell(X)\) is
  therefore the graph \(G_\ell(X)\). Sending the vertex \(x\) to the one-tuple
  \(\langle x\rangle\) makes the degree-zero simplicial chains of
  \(\VR_\ell(X)\) into \(\mathbb{Z}[\Ob X]\), the boundary of an edge
  \(xy\), in either order, into \(\pm(\langle y\rangle-\langle x\rangle)\),
  and the subgroup the boundaries span into the subgroup \(R\) of
  \emph{The Components}, so \(H_0(\VR_\ell(X))\cong\mathbb{Z}[\Ob X]/R
  =\PH_0(X)(\ell)\), and both are free abelian on \(\pi_0G_\ell(X)\) by that
  item. For naturality let \(\ell\le\ell'\) in \([0,\infty)\). A simplex of
  \(\VR_\ell(X)\) has diameter at most \(\ell\le\ell'\), so \(\VR_\ell(X)\) is
  a subcomplex of \(\VR_{\ell'}(X)\), and the isomorphisms just computed put
  the two transition maps into the square of abelian groups
  \[
  \begin{tikzcd}[row sep=2.0em, column sep=4.6em]
    \PH_0(X)(\ell)
      \arrow[r, "{\PH_0(X)(\ell\le\ell')}", "\dnote{[z]\,\mapsto\,[z]}"']
      \arrow[d, "\cong"']
    & \PH_0(X)(\ell')
      \arrow[d, "\cong"]\\
    H_0\bigl(\VR_\ell(X)\bigr)
      \arrow[r, "\dnote{[z]\,\mapsto\,[z]}"]
    & H_0\bigl(\VR_{\ell'}(X)\bigr)\mathrlap{\,,}
  \end{tikzcd}
  \]
  whose lower map is induced by the inclusion of that subcomplex. The square
  commutes by computation on generators: the classes \([\langle x\rangle]\)
  span \(\PH_0(X)(\ell)\), every one of the four maps fixes the underlying
  point \(x\), and both composites send \([\langle x\rangle]\) to the class of
  the vertex \(x\) in \(H_0(\VR_{\ell'}(X))\). The isomorphism is therefore
  natural in \(\ell\), and \(\PH_0(X)\) is the degree-zero persistent
  homology of the Vietoris--Rips filtration of \(X\).
\item \textbf{The Dendrogram.} Let \(\Ob X\) be finite, and for two points
  \(x,y\in\Ob X\) put
  \[
    u(x,y)=\min\Bigl\{\max_{1\le r\le k}d(v_{r-1},v_r)\Bigm|
      x=v_0,v_1,\dots,v_k=y\ \text{in}\ \Ob X\Bigr\},
  \]
  the smallest height at which a chain joins \(x\) to \(y\). A path of
  \(G_\ell(X)\) from \(x\) to \(y\) is a chain with all steps at most \(\ell\),
  and conversely, so \(x\) and \(y\) lie in one component of \(G_\ell(X)\) if and
  only if \(u(x,y)\le\ell\). The partition \(\pi_0G_\ell(X)\) is therefore the
  set of \(u\)-balls of radius \(\ell\), and \(\ell\mapsto\pi_0G_\ell(X)\) is the
  merge tree of single-linkage clustering, its dendrogram. Over a field \(\mathbb{F}\)
  the module \(\PH_0(X;\mathbb{F})\) is pointwise \(\mathbb{F}[\pi_0G_\ell(X)]\), so its
  barcode has one bar \([0,u\text{-height})\) for each merge and one bar
  \([0,\infty)\) for each component at large \(\ell\).
\begin{figure}[t]
\centering
\begin{tikzpicture}[x=1mm,y=1mm,font=\small,
  pt/.style={circle,fill=ink,inner sep=0pt,minimum size=1.5mm},
  ed/.style={accent,line width=0.85pt},
  ned/.style={ink,line width=0.4pt,dotted},
  lab/.style={font=\footnotesize,fill=white,inner sep=1.1pt},
  tag/.style={anchor=south west,font=\bfseries\color{accent}},
  ax/.style={ink,line width=0.4pt},
  cut/.style={accentsoft,line width=0.4pt,dashed}]

\begin{scope}
  \node[tag] at (0,24) {(a) $\VR_{3.5}(X)$.};
  \coordinate (a1) at (0,0);
  \coordinate (a2) at (15,17);
  \coordinate (a3) at (31,0);
  \coordinate (a4) at (48,-8);
  \fill[accentsoft,opacity=0.45] (a1)--(a2)--(a3)--cycle;
  \draw[ned] (a2)--(a4) node[lab,pos=0.45] {$4$};
  \draw[ned] (a1)--(a4) node[lab,pos=0.72] {$5$};
  \draw[ed] (a1)--(a2) node[lab,midway] {$1$};
  \draw[ed] (a2)--(a3) node[lab,midway] {$2.5$};
  \draw[ed] (a1)--(a3) node[lab,midway] {$3.5$};
  \draw[ed] (a3)--(a4) node[lab,midway] {$1.5$};
  \node[pt] at (a1) {}; \node[pt] at (a2) {};
  \node[pt] at (a3) {}; \node[pt] at (a4) {};
  \node[anchor=east,font=\footnotesize] at ($(a1)+(-1.4,0)$) {$x_1$};
  \node[anchor=south,font=\footnotesize] at ($(a2)+(0,1.4)$) {$x_2$};
  \node[anchor=south west,font=\footnotesize] at ($(a3)+(2.2,1.0)$) {$x_3$};
  \node[anchor=west,font=\footnotesize] at ($(a4)+(1.4,0)$) {$x_4$};
  \node[font=\scriptsize,color=accent] at (15.3,6.0)
    {$\{x_1,x_2,x_3\}$};
\end{scope}

\begin{scope}[shift={(76,-6)}]
  \node[tag] at (0,30) {(b) Dendrogram of $X$.};
  \draw[ax,-{Stealth[length=1.5mm]}] (-6,0) -- (-6,27.5)
    node[anchor=south,font=\footnotesize] {$\ell$};
  \foreach \h/\t in {0/0, 8/1, 12/1.5, 20/2.5}{
    \draw[ax] (-6.9,\h) -- (-6,\h);
    \node[anchor=east,font=\footnotesize] at (-7.2,\h) {$\t$};}
  \foreach \h in {8,12,20}{\draw[cut] (-6,\h) -- (44,\h);}
  \coordinate (b1) at (0,0);  \coordinate (b2) at (9,0);
  \coordinate (b3) at (22,0); \coordinate (b4) at (33,0);
  \draw[ed] (b1)|-(4.5,8)-|(b2);
  \draw[ed] (b3)|-(27.5,12)-|(b4);
  \draw[ed] (4.5,8)|-(16,20)-|(27.5,12);
  \draw[ed] (16,20)--(16,27);
  \foreach \p/\n in {b1/1, b2/2, b3/3, b4/4}{
    \node[pt] at (\p) {};
    \node[anchor=north,font=\footnotesize] at ($(\p)+(0,-1.2)$) {$x_\n$};}
  \node[anchor=west,font=\footnotesize,color=accent] at (36,3.6)
    {$\mathbb{Z}^{4}$};
  \node[anchor=west,font=\footnotesize,color=accent] at (36,10)
    {$\mathbb{Z}^{3}$};
  \node[anchor=west,font=\footnotesize,color=accent] at (36,16)
    {$\mathbb{Z}^{2}$};
  \node[anchor=west,font=\footnotesize,color=accent] at (36,24)
    {$\mathbb{Z}$};
\end{scope}
\end{tikzpicture}
\caption{The example in the proof of \Cref{cor:degree-zero}. \textbf{(a)}~The
complex \(\VR_{3.5}(X)\): solid its edges, shaded its \(2\)-simplex, dotted
the two pairs at distance greater than \(3.5\). \textbf{(b)}~The dendrogram
of \(X\), with the values of \(\PH_0(X)\) on the right.}
\label{fig:dendrogram}
\end{figure}

\item \textbf{An Example.} Let \(\Ob X=\{x_1,x_2,x_3,x_4\}\) carry the
  shortest-path metric of the weighted path with weights \(1\), \(2.5\) and
  \(1.5\), so that \(d(x_1,x_2)=1\), \(d(x_2,x_3)=2.5\), \(d(x_3,x_4)=1.5\),
  \(d(x_1,x_3)=3.5\), \(d(x_2,x_4)=4\) and \(d(x_1,x_4)=5\). At \(\ell=3.5\) four
  of the six pairs are edges of \(G_{3.5}(X)\). \Cref{fig:dendrogram} draws
  the complex \(\VR_{3.5}(X)\) and the dendrogram of \(X\).
  The bands on the right of \Cref{fig:dendrogram}(b) are the values of the
  persistence module,
  \[
    \PH_0(X)(\ell)\cong
    \begin{cases}
      \mathbb{Z}^{4}, & 0\le\ell<1,\\
      \mathbb{Z}^{3}, & 1\le\ell<1.5,\\
      \mathbb{Z}^{2}, & 1.5\le\ell<2.5,\\
      \mathbb{Z}, & 2.5\le\ell,
    \end{cases}
  \]
  with transition maps the surjections induced by the merges. Over a field the
  barcode is \([0,1)\), \([0,1.5)\), \([0,2.5)\) and \([0,\infty)\), the finite bars ending at the merge heights.\qedhere
\end{itemize}
\end{proof}

\begin{proof}[Proof of \Cref{prop:mc-explicit}]~
Throughout, \(\mathbf{x}=\langle x_0,\dots,x_n\rangle\) is a nondegenerate tuple
of \(\Nv(X)_n\) with \(\lambda\mathbf{x}=\ell\). That \(X\) is separated means
that \(d(x,y)=0\) forces \(x=y\) (\Cref{sec:base}), so distinct points of
\(\Ob X\) have positive distance. The length \(\ell\) is finite by hypothesis.
\begin{itemize}
\item \textbf{The Two Graded Groups.} By \eqref{eq:filtered-chains} the group
  \(F_\ell C_n(X)\) is spanned by the nondegenerate tuples of length at most
  \(\ell\), and a nondegenerate \(\mathbf{y}\) lies in
  \(F_{<\ell}C_n(X)=\bigcup_{\ell'<\ell}F_{\ell'}C_n(X)\) if and only if
  \(\lambda\mathbf{y}\le\ell'\) for some \(\ell'<\ell\), that is, if and only if
  \(\lambda\mathbf{y}<\ell\). Hence
  \[
    F_\ell C_n(X)=\bigl\langle\mathbf{y}\bigm|\lambda\mathbf{y}\le\ell\bigr\rangle_{\mathbb{Z}},
    \qquad
    F_{<\ell}C_n(X)=\bigl\langle\mathbf{y}\bigm|\lambda\mathbf{y}<\ell\bigr\rangle_{\mathbb{Z}},
  \]
  both spans taken over nondegenerate \(\mathbf{y}\in\Nv(X)_n\), which are basis
  elements of \(C_n(X)\). The first spanning set is the disjoint union of the
  second with \(\{\mathbf{y}\mid\lambda\mathbf{y}=\ell\}\), so the span splits as a
  direct sum and the quotient is the complementary summand,
  \begin{equation}\label{eq:mc-basis}
    F_\ell C_n(X)=F_{<\ell}C_n(X)\oplus
      \bigl\langle\mathbf{y}\bigm|\lambda\mathbf{y}=\ell\bigr\rangle_{\mathbb{Z}},
    \qquad
    \MC_{n,\ell}(X)\cong
      \bigl\langle\mathbf{y}\bigm|\lambda\mathbf{y}=\ell\bigr\rangle_{\mathbb{Z}} .
  \end{equation}
  A direct summand spanned by a subset of a basis is free on that subset, so
  \(\MC_{n,\ell}(X)\) is free abelian on the nondegenerate tuples of length
  \(\ell\).
\item \textbf{The Outer Faces.} Let \(n\ge 1\). The consecutive pairs of
  \(d_0\mathbf{x}\) are consecutive pairs of \(\mathbf{x}\), so
  \(d_0\mathbf{x}\) is again nondegenerate, and the case \(i=0\) of
  \eqref{eq:face-length} together with \(\ell<\infty\) gives
  \[
    \lambda(d_0\mathbf{x})=\lambda\mathbf{x}-d(x_0,x_1)=\ell-d(x_0,x_1)<\ell,
  \]
  because \(x_0\neq x_1\) forces \(d(x_0,x_1)>0\). Therefore
  \(d_0\mathbf{x}\in F_{<\ell}C_{n-1}(X)\) and, in \(\MC_{n-1,\ell}(X)\),
  \[
    [d_0\mathbf{x}]=d_0\mathbf{x}+F_{<\ell}C_{n-1}(X)=0 .
  \]
  The case \(i=n\) of \eqref{eq:face-length} gives
  \(\lambda(d_n\mathbf{x})=\ell-d(x_{n-1},x_n)<\ell\) in the same way, so
  \([d_n\mathbf{x}]=0\).
\item \textbf{The Inner Faces.} Let \(0<i<n\). Both lengths are finite, so
  \eqref{eq:face-defect} reads
  \[
    \ell-\lambda(d_i\mathbf{x})
      =d(x_{i-1},x_i)+d(x_i,x_{i+1})-d(x_{i-1},x_{i+1})\ge 0,
  \]
  and the two sides vanish together. Reading off the equality case and its
  negation,
  \[
    \lambda(d_i\mathbf{x})=\ell
      \iff d(x_{i-1},x_i)+d(x_i,x_{i+1})=d(x_{i-1},x_{i+1})
      \iff x_{i-1}\preceq x_i\preceq x_{i+1},
  \]
  the second equivalence being the definition of \(\preceq\), and otherwise
  \(\lambda(d_i\mathbf{x})<\ell\), which puts \(d_i\mathbf{x}\) into
  \(F_{<\ell}C_{n-1}(X)\) and makes \([d_i\mathbf{x}]=0\). In the equality case
  \(d_i\mathbf{x}\) is nondegenerate: its consecutive pairs are those of
  \(\mathbf{x}\) apart from \((x_{i-1},x_{i+1})\), and \(x_{i-1}=x_{i+1}\) would
  give
  \[
    d(x_{i-1},x_i)+d(x_i,x_{i-1})=d(x_{i-1},x_{i-1})=0,
  \]
  hence \(d(x_{i-1},x_i)=0\) and \(x_{i-1}=x_i\) by separatedness, against the
  nondegeneracy of \(\mathbf{x}\). So \(d_i\mathbf{x}\) is a basis element of
  \(\MC_{n-1,\ell}(X)\) by \eqref{eq:mc-basis}, and in every case
  \begin{equation}\label{eq:face-class}
    [d_i\mathbf{x}]=\partial_i\mathbf{x}=
    \begin{cases}
      \langle x_0,\dots,x_{i-1},x_{i+1},\dots,x_n\rangle,
        & x_{i-1}\preceq x_i\preceq x_{i+1},\\
      0, & \text{otherwise.}
    \end{cases}
  \end{equation}
\item \textbf{The Differential.} Faces do not raise the length by
  \Cref{lem:filtration}, so \(\partial\) carries \(F_\ell C_\bullet(X)\) and
  \(F_{<\ell}C_\bullet(X)\) into themselves and descends to the quotient. On a
  basis element \([\mathbf{x}]\) of \(\MC_{n,\ell}(X)\) the induced differential is
  computed by the two previous items,
  \[
    \partial[\mathbf{x}]
      =\Bigl[\sum_{i=0}^{n}(-1)^id_i\mathbf{x}\Bigr]
      =\sum_{i=0}^{n}(-1)^i[d_i\mathbf{x}]
      =\sum_{i=1}^{n-1}(-1)^i[d_i\mathbf{x}]
      \overset{\eqref{eq:face-class}}{=}\sum_{i=1}^{n-1}(-1)^i\partial_i\mathbf{x},
  \]
  the third equality by \([d_0\mathbf{x}]=[d_n\mathbf{x}]=0\), which is
  \eqref{eq:differential}. Each \(\partial_i\) lowers the degree by one and
  leaves the length at \(\ell\), so \(\partial\) has bidegree
  \((-1,0)\).\qedhere
\end{itemize}
\end{proof}

\begin{proof}[Proof of \Cref{cor:no-betweenness}]
Let \(\mathbf{x}\) be a nondegenerate tuple of length \(\ell\) in degree \(n\).
Step (1) is the differential \eqref{eq:differential} of
\Cref{prop:mc-explicit}. Step (2) reads \(x_i\neq x_{i-1}\) and
\(x_i\neq x_{i+1}\) off the nondegeneracy of \(\mathbf{x}\), so that
\(x_i\notin\{x_{i-1},x_{i+1}\}\) and the hypothesis forbids
\(x_{i-1}\preceq x_i\preceq x_{i+1}\), whence \(\partial_i\mathbf{x}=0\) for
every \(0<i<n\) by \eqref{eq:face-class}. Step (3) is the definition of
homology on a complex whose differential vanishes in every degree and every
length. Then
\[
  \partial\mathbf{x}
    \overset{\text{(1)}}{=}\sum_{i=1}^{n-1}(-1)^i\partial_i\mathbf{x}
    \overset{\text{(2)}}{=}0,
  \qquad
  \MH_{n,\ell}(X)
    \overset{\text{(3)}}{=}\ker\partial/\operatorname{im}\partial
    =\MC_{n,\ell}(X),
\]
and \(\MC_{n,\ell}(X)\) is free abelian on the nondegenerate tuples of length
\(\ell\) by \Cref{prop:mc-explicit}.
\end{proof}

\begin{proof}[Proof of \Cref{lem:digon}]
Both \(\langle x,y\rangle\) and \(\langle y,x\rangle\) are nondegenerate,
\(x\neq y\) being assumed, and so is \(\langle x,y,x\rangle\). The distance of
\(\Met\) is symmetric, so \(\lambda\langle x,y\rangle=\lambda\langle y,x\rangle=e\)
and \(\lambda\langle x,y,x\rangle=d(x,y)+d(y,x)=2e\), which places the two
chains in \(F_eC_1(X)\) and in \(F_{2e}C_2(X)\) by \eqref{eq:filtered-chains}.
On \(C_1(X)\) the differential is \(\partial=d_0-d_1\) with
\(d_0\langle u,v\rangle=\langle v\rangle\) and \(d_1\langle u,v\rangle=\langle u\rangle\),
on \(C_2(X)\) it is \(\partial=d_0-d_1+d_2\), and \(\langle x,x\rangle=0\) in
\(C_1(X)\), that tuple being degenerate, so
\begin{align*}
  \partial\bigl(\langle x,y\rangle+\langle y,x\rangle\bigr)
    &=\bigl(\langle y\rangle-\langle x\rangle\bigr)
      +\bigl(\langle x\rangle-\langle y\rangle\bigr)=0,\\
  \partial\langle x,y,x\rangle
    &=\langle y,x\rangle-\langle x,x\rangle+\langle x,y\rangle
    =\langle x,y\rangle+\langle y,x\rangle .\qedhere
\end{align*}
\end{proof}

\begin{proof}[Proof of \Cref{lem:complex}]~
For \(Z\) in \(\Met\) write \(B_{n,\ell}(Z)\) for the set of nondegenerate
\(\mathbf{z}\in\Nv(Z)_n\) with \(\lambda\mathbf{z}=\ell\), which is a basis of the
free abelian group \(\MC_{n,\ell}(Z)\) by \Cref{prop:mc-explicit}.
\begin{enumerate}
\item Both functors are induced by the chain map \(C_\bullet(F)\) that the
  simplicial map \(\Nv(F)\) of \Cref{lem:filtration}\Cref{itm:filt-map} induces on normalised
  chains, whose value, chain rule and functoriality are inherited on the graded
  pieces of the filtration at every length.
  \begin{itemize}
  \item \textbf{The Filtrations Are Preserved.} By \Cref{lem:filtration}\Cref{itm:filt-map} the
    map \(\Nv(F)\) is simplicial and satisfies \(\lambda\circ\Nv(F)\le\lambda\),
    so the chain map \(C_\bullet(F)\) on normalised chains obeys
    \[
      C_n(F)\bigl(F_\ell C_n(X)\bigr)\subseteq F_\ell C_n(Y),
      \qquad
      C_n(F)\bigl(F_{<\ell}C_n(X)\bigr)\subseteq F_{<\ell}C_n(Y),
    \]
    and \(\MC_{n,\ell}(F)\) is the map it induces on the quotients by
    \(F_{<\ell}C_n\),
    \[
    \begin{tikzcd}[row sep=1.7em, column sep=2.4em]
      F_{<\ell}C_n(X) \arrow[r, hook] \arrow[d, "{C_n(F)}"']
        & F_\ell C_n(X) \arrow[r, two heads] \arrow[d, "{C_n(F)}"]
        & \MC_{n,\ell}(X) \arrow[d, dashed, "{\MC_{n,\ell}(F)}"]\\
      F_{<\ell}C_n(Y) \arrow[r, hook]
        & F_\ell C_n(Y) \arrow[r, two heads]
        & \MC_{n,\ell}(Y)\mathrlap{\,.}
    \end{tikzcd}
    \]
  \item \textbf{Its Value on a Basis Element.} Let
    \(\mathbf{x}\in B_{n,\ell}(X)\) and put \(a_i=d(x_{i-1},x_i)\) and
    \(b_i=d(fx_{i-1},fx_i)\), so that \(b_i\le a_i\) for \(1\le i\le n\) by
    nonexpansiveness. All terms are finite, \(\ell\) being finite, and
    subtracting the two lengths term by term gives
    \[
      \sum_{i=1}^{n}(a_i-b_i)
        =\lambda\mathbf{x}-\lambda\bigl(\Nv(F)\mathbf{x}\bigr),
      \qquad a_i-b_i\ge 0 ,
    \]
    a finite sum of nonnegative reals, which vanishes if and only if every
    summand vanishes. With \(\lambda\mathbf{x}=\ell\) this gives the criterion for
    the class of the image to survive,
    \[
      \lambda\bigl(\Nv(F)\mathbf{x}\bigr)=\ell
        \iff a_i=b_i\ \text{for all}\ 1\le i\le n
        \implies b_i=a_i>0,
    \]
    the last step by nondegeneracy of \(\mathbf{x}\) and separatedness of
    \(X\), so that \(\Nv(F)\mathbf{x}\in B_{n,\ell}(Y)\) in that case. As
    \(\lambda(\Nv(F)\mathbf{x})\le\ell\) throughout, the class of
    \(\Nv(F)\mathbf{x}\) in \(\MC_{n,\ell}(Y)\) is nonzero if and only if the \(a_i\) and the \(b_i\)
    agree in every index \(1\le i\le n\), so that
    \[
      \MC_{n,\ell}(F)[\mathbf{x}]=
      \begin{cases}
        [\langle fx_0,\dots,fx_n\rangle],
          & d(fx_{i-1},fx_i)=d(x_{i-1},x_i)\ \text{for}\ 1\le i\le n,\\
        0, & \text{otherwise.}
      \end{cases}
    \]
  \item \textbf{Chain Map.} Simpliciality of \(\Nv(F)\) gives
    \(d_iC_n(F)=C_{n-1}(F)d_i\) for every \(i\), hence
    \[
      \partial C_n(F)
        =\sum_{i=0}^{n}(-1)^id_iC_n(F)
        =\sum_{i=0}^{n}(-1)^iC_{n-1}(F)d_i
        =C_{n-1}(F)\partial ,
    \]
    so the two composites of the square of chain maps induced on the graded
    pieces,
    \[
    \begin{tikzcd}[row sep=1.7em, column sep=2.6em]
      \MC_{n,\ell}(X) \arrow[r, "\partial"] \arrow[d, "{\MC_{n,\ell}(F)}"']
        & \MC_{n-1,\ell}(X) \arrow[d, "{\MC_{n-1,\ell}(F)}"]\\
      \MC_{n,\ell}(Y) \arrow[r, "\partial"']
        & \MC_{n-1,\ell}(Y)
    \end{tikzcd}
    \]
    coincide, and the square commutes in every degree \(n\) and at every length
    \(\ell\).
  \item \textbf{Functoriality.} For \(G\colon Y\to Z\) in \(\Met\) and
    \(\mathbf{x}\in B_{n,\ell}(X)\), \Cref{lem:filtration}\Cref{itm:filt-nerve} gives
    \[
    \begin{aligned}
      \MC_{n,\ell}(1_X)[\mathbf{x}]&=[\Nv(1_X)\mathbf{x}]=[\mathbf{x}],\\
      \MC_{n,\ell}(GF)[\mathbf{x}]&=[\Nv(G)\Nv(F)\mathbf{x}]
        =\MC_{n,\ell}(G)\MC_{n,\ell}(F)[\mathbf{x}],
    \end{aligned}
    \]
    so \(\MC_{\bullet,\ell}\colon\Met\to\mathrm{Ch}(\Ab)\) is a functor, and so
    is \(\MH_{n,\ell}=H_n\circ\MC_{\bullet,\ell}\).
  \end{itemize}
\item The splitting comes from a partition of the basis \(B_{n,\ell}(X)\) by the
  classes of \(\pi_0X\), a partition that the faces and the differential of
  \Cref{prop:mc-explicit} respect.
  \begin{itemize}
  \item \textbf{All Entries Lie in One Class.} Let
    \(\mathbf{x}\in B_{n,\ell}(X)\). For \(1\le i\le n\) the distance
    \(d(x_0,x_i)\) is bounded by a partial sum of \(\lambda\mathbf{x}\), and the
    distance of \(\Met\) is symmetric, so
    \[
      d(x_0,x_i)\le\sum_{j=1}^{i}d(x_{j-1},x_j)\le\lambda\mathbf{x}=\ell<\infty,
      \qquad
      d(x_i,x_0)=d(x_0,x_i)<\infty ,
    \]
    which is \(x_0\sim x_i\). By transitivity \(x_i\sim x_j\) for all \(i,j\),
    and \(c(\mathbf{x})=[x_0]\in\pi_0X\).
  \item \textbf{The Basis Is Partitioned.} A tuple with all entries in \(c\) is a
    tuple of \(X_c\) of the same length, the distance of \(X_c\) being the
    restriction of \(d\), so \(\mathbf{x}\mapsto c(\mathbf{x})\)
    gives
    \[
      B_{n,\ell}(X)=\coprod_{c\in\pi_0X}B_{n,\ell}(X_c) .
    \]
  \item \textbf{The Partition Respects \(\partial\).} The entries of
    \(d_i\mathbf{x}\) are entries of \(\mathbf{x}\), so
    \(c(d_i\mathbf{x})=c(\mathbf{x})\) and \(\partial_i\) of
    \Cref{prop:mc-explicit} carries \(\bigl\langle B_{n,\ell}(X_c)\bigr\rangle_{\mathbb{Z}}\)
    into \(\bigl\langle B_{n-1,\ell}(X_c)\bigr\rangle_{\mathbb{Z}}\), hence so does
    \(\partial=\sum_{i=1}^{n-1}(-1)^i\partial_i\). Splitting the basis
    splits the whole complex as a direct sum,
    \[
      \MC_{n,\ell}(X)=\bigl\langle B_{n,\ell}(X)\bigr\rangle_{\mathbb{Z}}
        =\bigoplus_{c\in\pi_0X}\bigl\langle B_{n,\ell}(X_c)\bigr\rangle_{\mathbb{Z}}
        =\bigoplus_{c\in\pi_0X}\MC_{n,\ell}(X_c),
    \]
    compatibly with \(\partial\) in every degree, which is the isomorphism
    \eqref{eq:splitting}.
  \item \textbf{Finite Distances.} For \(x,y\in c\) the relation \(x\sim y\)
    holds by the definition of \(\pi_0X\), so \(d(x,y)<\infty\) and \(X_c\) has
    finite distances, as the equivalence of \Cref{sec:base} states.\qedhere
  \end{itemize}
\end{enumerate}
\end{proof}

\section{Proofs of \texorpdfstring{\Cref{sec:comparison}}{Section 5}}\label{app:comparison}

\begin{proof}[Proof of \Cref{cor:critical}]~
\begin{enumerate}
\item\label{itm:crit-ladder} \textbf{The Ladder.} Let \(X\) be an object of
  \(\Met\), let \(n\ge 0\) and let \(\ell\in[0,\infty)\). Write
  \(\iota_\ell\colon F_{<\ell}C_\bullet(X)\hookrightarrow F_\ell C_\bullet(X)\) for the
  inclusion and \(\pi_\ell\colon F_\ell C_\bullet(X)\twoheadrightarrow\MC_{\bullet,\ell}(X)\)
  for the quotient map of \Cref{def:magnitude}. Reading the short exact sequence
  \eqref{eq:ses} of chain complexes in three consecutive degrees, with
  \(C_m(X)=0\) for \(m<0\), gives the ladder
  \begin{equation}\label{eq:ladder}
  \begin{tikzcd}[row sep=2.1em, column sep=2.0em, ampersand replacement=\&]
    0\arrow[r]
      \arrow[from=1-3,to=1-4,trace]\arrow[from=1-3,to=2-3,trace]
      \arrow[from=1-4,to=2-4,trace]\arrow[from=2-2,to=2-3,trace]
      \arrow[from=2-3,to=2-4,trace]
      \&F_{<\ell}C_{m+1}(X)\arrow[r,hook,"\iota_\ell"]\arrow[d,"\partial"']
      \&F_\ell C_{m+1}(X)\arrow[r,two heads,"\pi_\ell"]\arrow[d,"\partial"']
      \&\MC_{m+1,\ell}(X)\arrow[r]\arrow[d,"\partial"']\&0\\
    0\arrow[r]\&F_{<\ell}C_{m}(X)\arrow[r,hook,"\iota_\ell"]\arrow[d,"\partial"']
      \&F_\ell C_{m}(X)\arrow[r,two heads,"\pi_\ell"]\arrow[d,"\partial"']
      \&\MC_{m,\ell}(X)\arrow[r]\arrow[d,"\partial"']\&0\\
    0\arrow[r]\&F_{<\ell}C_{m-1}(X)\arrow[r,hook,"\iota_\ell"]
      \&F_\ell C_{m-1}(X)\arrow[r,two heads,"\pi_\ell"]
      \&\MC_{m-1,\ell}(X)\arrow[r]\&0
  \end{tikzcd}
  \end{equation}
  \looseness=1
  for every degree \(m\). Its rows are exact, its squares commute because
  \(\iota_\ell\) and \(\pi_\ell\) are chain maps. The map \(\pi_\ell\) is
  surjective in each degree, so lifts along it exist, and \(\iota_\ell\) is
  injective with image \(\ker\pi_\ell\), so a chain killed by \(\pi_\ell\) is
  \(\iota_\ell\) of one chain and of no other. The identifications
  \eqref{eq:les-terms} of \Cref{thm:les} read
  \(H_n\bigl(F_{<\ell}C_\bullet(X)\bigr)\) as \(\PH_n(X)(\ell^-)\), the group
  \(H_n\bigl(F_\ell C_\bullet(X)\bigr)\) as \(\PH_n(X)(\ell)\) and
  \(H_m\bigl(\MC_{\bullet,\ell}(X)\bigr)\) as \(\MH_{m,\ell}(X)\), and under
  them \(\tau_{n,\ell}=(\iota_\ell)_*\) and \(\rho_{n,\ell}=(\pi_\ell)_*\).
  Every group named in \Cref{cor:critical} is thereby a homology group of one of
  the three columns of \eqref{eq:ladder}, and the three chases below run along
  its traced path, the rows \(m=n+1\) and \(m=n\) apart from
  \(F_{<\ell}C_{n+1}(X)\).
\item\label{itm:crit-delta}\looseness=-1 \textbf{The Connecting Map.} Let
  \(w\in\MC_{n+1,\ell}(X)\) be a cycle. Choose a lift
  \(\tilde{w}\in F_\ell C_{n+1}(X)\) with \(\pi_\ell\tilde{w}=w\). Its boundary
  satisfies \(\pi_\ell\partial\tilde{w}=\partial w=0\), so
  \(\partial\tilde{w}=\iota_\ell u\) for a unique \(u\in F_{<\ell}C_n(X)\),
  which the row \(m=n\) of \eqref{eq:ladder} supplies. The four elements sit in
  the two rows \(m=n+1\) and \(m=n\) of \eqref{eq:ladder} as
  \begin{equation}\label{eq:delta-chase}
  \begin{tikzcd}[row sep=1.9em, column sep=2.6em, ampersand replacement=\&]
    {}\arrow[from=1-2,to=1-3,trace]\arrow[from=1-2,to=2-2,trace]
      \arrow[from=1-3,to=2-3,trace]\arrow[from=2-1,to=2-2,trace]
      \&\tilde{w}\arrow[r,mapsto,"\pi_\ell"]\arrow[d,mapsto,"\partial"']
      \&w\arrow[d,mapsto,"\partial"]\\
    u\arrow[r,mapsto,"\iota_\ell"']\&\partial\tilde{w}\&0
  \end{tikzcd}
  \end{equation}
  and \(\iota_\ell\partial u=\partial\partial\tilde{w}=0\) with \(\iota_\ell\)
  injective makes \(u\) a cycle. Put \(\delta_{n+1,\ell}[w]=[u]\). Two lifts of
  \(w\) differ by \(\iota_\ell a\) with \(a\in F_{<\ell}C_{n+1}(X)\), so their
  cycles obey \(\iota_\ell(u_1-u_2)=\iota_\ell\partial a\) and
  \(u_1-u_2=\partial a\), which leaves \([u]\) alone. For a second
  representative \(w+\partial y\) of \([w]\), with \(y\in\MC_{n+2,\ell}(X)\),
  lift \(y\) to \(\tilde{y}\in F_\ell C_{n+2}(X)\). The chain
  \(\tilde{w}+\partial\tilde{y}\) lifts \(w+\partial y\) and has boundary
  \(\partial\tilde{w}=\iota_\ell u\) again, and the independence just proved
  makes every lift of \(w+\partial y\) return \([u]\). Lifts add, so
  \(\delta_{n+1,\ell}\) is a homomorphism
  \(\MH_{n+1,\ell}(X)\to\PH_n(X)(\ell^-)\). The three maps
  \(\delta_{n+1,\ell}\), \(\tau_{n,\ell}\) and \(\rho_{n,\ell}\) named so far
  join their four groups into the segment
  \begin{equation}\label{eq:segment}
  \begin{tikzcd}[column sep=2.6em, ampersand replacement=\&]
    \MH_{n+1,\ell}(X)\arrow[r,"\delta_{n+1,\ell}"]
      \&|[fill=tintleft,rounded corners=2pt,inner sep=3pt]|\PH_n(X)(\ell^-)
        \arrow[r,"\tau_{n,\ell}"]
      \&|[fill=tintright,rounded corners=2pt,inner sep=3pt]|\PH_n(X)(\ell)
        \arrow[r,"\rho_{n,\ell}"]
      \&\MH_{n,\ell}(X)
  \end{tikzcd}
  \end{equation}
  whose four groups are four consecutive terms of the long exact sequence of
  \Cref{thm:les}. Warm shading runs through the shaded term on the left, the
  exactness of \eqref{eq:segment} there, the injectivity of \(\tau_{n,\ell}\)
  that follows from it and the group \(\MH_{n+1,\ell}(X)\) that controls that
  injectivity. Cool shading runs through the shaded term on the right, exactness
  there, surjectivity and \(\MH_{n,\ell}(X)\), and \Cref{fig:critical} uses
  both colours.
\item\label{itm:crit-exact-left} \textbf{Exactness at
  \hlleft{\PH_n(X)(\ell^-)}.} Reading \eqref{eq:delta-chase} along its bottom
  row and up its middle column gives
  \(\tau_{n,\ell}[u]=[\iota_\ell u]=[\partial\tilde{w}]=0\), the class of a
  boundary of \(F_\ell C_\bullet(X)\), so
  \(\operatorname{im}\delta_{n+1,\ell}\subseteq\ker\tau_{n,\ell}\). For the
  reverse inclusion let \(z\in F_{<\ell}C_n(X)\) be a cycle with
  \(\tau_{n,\ell}[z]=0\), so that \(\iota_\ell z=\partial g\) for some
  \(g\in F_\ell C_{n+1}(X)\), and put \(v=\pi_\ell g\in\MC_{n+1,\ell}(X)\). The
  chase
  \begin{equation}\label{eq:left-chase}
  \begin{tikzcd}[row sep=1.9em, column sep=2.6em, ampersand replacement=\&]
    {}\arrow[from=1-2,to=1-3,trace]\arrow[from=1-2,to=2-2,trace]
      \arrow[from=1-3,to=2-3,trace]\arrow[from=2-1,to=2-2,trace]
      \&g\arrow[r,mapsto,"\pi_\ell"]\arrow[d,mapsto,"\partial"']
      \&v\arrow[d,mapsto,"\partial"]\\
    z\arrow[r,mapsto,"\iota_\ell"']\&\partial g\&0
  \end{tikzcd}
  \end{equation}
  commutes, since \(\partial v=\pi_\ell\partial g=\pi_\ell\iota_\ell z=0\). So
  \(v\) is a cycle of \(\MC_{n+1,\ell}(X)\), and \(g\) is a lift of \(v\) with
  \(\partial g=\iota_\ell z\). Comparing \eqref{eq:left-chase} with
  \eqref{eq:delta-chase} gives \(\delta_{n+1,\ell}[v]=[z]\), so
  \(\operatorname{im}\delta_{n+1,\ell}\) and \(\ker\tau_{n,\ell}\) are one and
  the same subgroup of the group \(\PH_n(X)(\ell^-)\).
\item\label{itm:crit-exact-right} \textbf{Exactness at \hlright{\PH_n(X)(\ell)}.}
  One inclusion is \(\rho_{n,\ell}\tau_{n,\ell}[z]=[\pi_\ell\iota_\ell z]=0\),
  valid for every cycle \(z\in F_{<\ell}C_n(X)\). For the other let
  \(g\in F_\ell C_n(X)\) be a cycle with \(\rho_{n,\ell}[g]=0\), so that
  \(\pi_\ell g=\partial q\) for some \(q\in\MC_{n+1,\ell}(X)\), and lift \(q\)
  to \(\tilde{q}\in F_\ell C_{n+1}(X)\). Subtracting boundaries from \(g\) and
  from \(\pi_\ell g\) turns the rows \(m=n+1\) and \(m=n\) of the ladder
  \eqref{eq:ladder} into the chase of elements
  \begin{equation}\label{eq:right-chase}
  \begin{tikzcd}[row sep=2.1em, column sep=3.2em, ampersand replacement=\&]
    {}\arrow[from=1-2,to=1-3,trace]\arrow[from=1-2,to=2-2,trace]
      \arrow[from=1-3,to=2-3,trace]\arrow[from=2-1,to=2-2,trace]
      \arrow[from=2-2,to=2-3,trace]
      \&\tilde{q}\arrow[r,mapsto,"\pi_\ell"]\arrow[d,mapsto,"g-\partial(-)"']
      \&q\arrow[d,mapsto,"\pi_\ell g-\partial(-)"]\\
    z\arrow[r,mapsto,"\iota_\ell"']\&g-\partial\tilde{q}
      \arrow[r,mapsto,"\pi_\ell"']\&0
  \end{tikzcd}
  \end{equation}
  whose two vertical assignments subtract a boundary from \(g\) and from
  \(\pi_\ell g\). It commutes because \(\pi_\ell\) is a chain map, and its
  right-hand column reads \(\pi_\ell g-\partial q=0\) by the choice of \(q\).
  The bottom middle entry is killed by \(\pi_\ell\), so it is
  \(\iota_\ell z\) for the one chain \(z\in F_{<\ell}C_n(X)\) named at the
  bottom left, and \(\iota_\ell\partial z=\partial g-\partial\partial\tilde{q}=0\)
  with \(\iota_\ell\) injective makes \(z\) a cycle. Its image is
  \(\tau_{n,\ell}[z]=[g-\partial\tilde{q}]=[g]\). Part~\Cref{itm:crit-ladder} and
  everything after it use of \eqref{eq:ses} only that it is a short exact
  sequence of chain complexes, together with the identifications
  \eqref{eq:les-terms}. The last step of the proof of \Cref{thm:les} supplies
  both with field coefficients, so \eqref{eq:ladder} and \eqref{eq:segment}
  stay exact over every field.
\item\label{itm:crit-iso} \textbf{Vanishing Forces an Isomorphism.} Assume
  \(\MH_{n,\ell}(X)=0=\MH_{n+1,\ell}(X)\). Exactness at
  \hlleft{\PH_n(X)(\ell^-)}, from \Cref{itm:crit-exact-left}, gives
  \(\ker\tau_{n,\ell}=\operatorname{im}\delta_{n+1,\ell}\), and the source of
  \(\delta_{n+1,\ell}\) is the zero group, so \(\tau_{n,\ell}\) is injective.
  Exactness at \hlright{\PH_n(X)(\ell)}, from \Cref{itm:crit-exact-right}, gives
  \(\operatorname{im}\tau_{n,\ell}=\ker\rho_{n,\ell}\), and the target of
  \(\rho_{n,\ell}\) is the zero group, so \(\tau_{n,\ell}\) is surjective. A
  bijective homomorphism of abelian groups is an isomorphism of abelian groups. The
  vanishing of \hlleft{\MH_{n+1,\ell}(X)} alone makes \(\tau_{n,\ell}\)
  injective, and the vanishing of \hlright{\MH_{n,\ell}(X)} alone makes it
  surjective. Both readings hold with coefficients in a field \(\mathbb{F}\), by
  the last sentence of \Cref{itm:crit-exact-right}.
\item\label{itm:crit-blocks}\looseness=-1 \textbf{Blocks of Constancy.} Let \(X\) be an
  object of \(\Met\) with \(\Ob X\) finite, let \(\mathbb{F}\) be a field and fix
  \(n\ge 0\). Each \(\Nv(X)_m\) is finite, an \(m\)-simplex being a map
  \([m]\to\Ob X\), so the finite lengths carried by the nondegenerate simplices
  up to degree \(n+1\) collect into the set
  \begin{equation}\label{eq:crit-lengths}
    \Lambda=\{0\}\cup\bigl\{\lambda\mathbf{x}\bigm|
      0\le m\le n+1,\ \mathbf{x}\in\Nv(X)_m\ \text{nondegenerate},\
      \lambda\mathbf{x}<\infty\bigr\},
  \end{equation}
  a finite subset of \([0,\infty)\), being a union of one singleton and
  \(n+2\) finite sets. Enumerate it as \(\ell_0<\dots<\ell_r\) with
  \(\ell_0=0\), put \(\ell_{r+1}=\infty\) and call \(B_i=[\ell_i,\ell_{i+1})\),
  for \(0\le i\le r\), the \emph{blocks} of \(\Lambda\). They are nonempty,
  pairwise disjoint and cover \([0,\infty)\). Now let \(\ell\le\ell'\) lie in one
  block \(B_i\) and let \(0\le m\le n+1\). By \eqref{eq:filtered-chains} the
  group \(F_\ell C_m(X)\) is spanned by the nondegenerate
  \(\mathbf{x}\in\Nv(X)_m\) with \(\lambda\mathbf{x}\le\ell\) and
  \(F_{\ell'}C_m(X)\) by those with \(\lambda\mathbf{x}\le\ell'\), two subsets of
  the one basis of \(C_m(X)\). A tuple \(\mathbf{x}\) in the second has
  \(\lambda\mathbf{x}<\infty\), hence \(\lambda\mathbf{x}\in\Lambda\) by
  \eqref{eq:crit-lengths}, and \(\lambda\mathbf{x}\le\ell'<\ell_{i+1}\) leaves
  only the values \(\ell_0,\dots,\ell_i\), all of them at most \(\ell_i\le\ell\).
  The two spanning sets agree, so \(F_\ell C_m(X)=F_{\ell'}C_m(X)\) for every
  \(m\le n+1\), and the same holds after \(-\otimes_{\mathbb{Z}}\mathbb{F}\).
  The group \(\PH_n(X;\mathbb{F})(\ell)\) is computed from the degrees
  \(n+1\), \(n\), \(n-1\) and the two differentials between them,
  restrictions of the differential of \(C_\bullet(X)\). All three degrees are
  at most \(n+1\), so \(\PH_n(X;\mathbb{F})(\ell)=\PH_n(X;\mathbb{F})(\ell')\)
  and the transition map between them is the identity of that space.
  \Cref{fig:critical} has \(\Lambda=\{0,1,2,3\}\) and four blocks.
\item\label{itm:crit-summands} \textbf{A Direct Sum of Maps.} Let
  \((f_s\colon V_s\to W_s)_{s\in S}\) be a family of \(\mathbb{F}\)-linear maps.
  The kernel of \(\bigoplus_{s\in S}f_s\) is \(\bigoplus_{s\in S}\ker f_s\) and
  its image is \(\bigoplus_{s\in S}\operatorname{im}f_s\). The first vanishes if
  and only if every \(\ker f_s\) does, and the second is all of
  \(\bigoplus_{s\in S}W_s\) if and only if \(\operatorname{im}f_s=W_s\) holds
  for every \(s\in S\). So \(\bigoplus_{s\in S}f_s\) is injective, surjective or
  bijective if and only if every one of the summand maps \(f_s\) has the
  property in question, for every index \(s\in S\).
\item\label{itm:crit-bars} \textbf{Every Bar Is a Union of Blocks.} Keep \(X\),
  \(\mathbb{F}\) and \(n\) of \Cref{itm:crit-blocks}, and write \(M\) for the
  module \(\PH_n(X;\mathbb{F})\colon\Vfin^{\op}\to\Vect_{\mathbb{F}}\). Each \(M\ell\)
  is a subquotient of \(F_\ell C_n(X)\otimes_{\mathbb{Z}}\mathbb{F}\), whose
  dimension is bounded by the finite number of nondegenerate \(n\)-simplices of
  \(\Nv(X)\), so \(M\) is tame. \Cref{thm:barcode} then provides a set \(S\), a
  family \((J_s)_{s\in S}\) of intervals indexing the multiset \(\mathcal{B}\),
  and an isomorphism of persistence modules
  \(\Theta\colon\bigoplus_{s\in S}\mathbb{F}_{J_s}\to M\). Let \(\ell\le\ell'\)
  lie in one block \(B_i\) and put \(f_s=(\mathbb{F}_{J_s})(\ell\le\ell')\), a
  family of \(\mathbb{F}\)-linear maps as in \Cref{itm:crit-summands}.
  Naturality of \(\Theta\) on the morphism \(\ell\le\ell'\) of \(\Vfin^{\op}\)
  is then the commuting square
  \begin{equation}\label{eq:theta-natural}
  \begin{tikzcd}[row sep=2.1em, column sep=4.0em, ampersand replacement=\&]
    \bigoplus_{s\in S}(\mathbb{F}_{J_s})\ell
      \arrow[r,"\bigoplus_{s\in S}f_s"]\arrow[d,"\Theta_\ell"',"\cong"]
      \&\bigoplus_{s\in S}(\mathbb{F}_{J_s})\ell'
        \arrow[d,"\Theta_{\ell'}","\cong"']\\
    M\ell\arrow[r,"1"']\&M\ell'
  \end{tikzcd}
  \end{equation}
  in which the lower map is the identity by \Cref{itm:crit-blocks} and the two
  vertical maps are isomorphisms. The upper map is an isomorphism, and by
  \Cref{itm:crit-summands} so is every \(f_s\). Its source is \(\mathbb{F}\)
  when \(\ell\in J_s\) and \(0\) otherwise, its target \(\mathbb{F}\) when
  \(\ell'\in J_s\) and \(0\) otherwise, and an isomorphism forces the two
  dimensions to agree. So \(\ell\in J_s\) holds if and only if \(\ell'\in J_s\)
  does, each \(J_s\) meets a block in the empty set or in the whole block, and
  each \(J_s\) is a union of the blocks of \(\Lambda\) listed in
  \Cref{itm:crit-blocks}.
\item\label{itm:crit-shape} \textbf{The Shape of a Bar.} Write
  \(I_s=\{i\mid B_i\subseteq J_s\}\) for \(s\in S\). Intervals are nonempty by
  the definition in \Cref{sec:barcodes}, and a block meeting \(J_s\) lies inside
  it by \Cref{itm:crit-bars}, so \(I_s\) is a nonempty subset of
  \(\{0,\dots,r\}\). Let \(i<j<k\) with \(i,k\in I_s\) and pick \(\ell\in B_i\),
  \(\ell'\in B_j\) and \(\ell''\in B_k\). Then
  \(\ell<\ell_{i+1}\le\ell_j\le\ell'\) and
  \(\ell'<\ell_{j+1}\le\ell_k\le\ell''\), and convexity of the interval \(J_s\)
  puts \(\ell'\) in \(J_s\), so the block \(B_j\) meets \(J_s\) and \(j\in I_s\)
  again by \Cref{itm:crit-bars}. The set \(I_s\) is an interval
  \(\{i_0,\dots,i_1\}\) of consecutive integers, and the bar \(J_s\) is the
  union of the blocks it names,
  \begin{equation}\label{eq:bar-shape}
    J_s=\bigcup_{i=i_0}^{i_1}B_i
      =\begin{cases}
         [\ell_{i_0},\ell_{i_1+1}), & i_1<r,\\
         [\ell_{i_0},\infty), & i_1=r,
       \end{cases}
  \end{equation}
  which is the shape of a bar asserted in the statement, its left endpoint and
  its finite right endpoint lying in the set \(\Lambda\) of
  \eqref{eq:crit-lengths}. The running example of \Cref{fig:critical} has the
  three bars \([0,\infty)\), \([0,2)\) and \([0,1)\).
\item\label{itm:crit-colim} \textbf{The Left Limit of a Direct Sum.} For a
  persistence module \(P\colon\Vfin^{\op}\to\Vect_{\mathbb{F}}\) and
  \(\ell\in[0,\infty)\) write
  \(P(\ell^-)=\operatorname*{colim}_{\ell'<\ell}P\ell'\), with structure maps
  \(c^P_{\ell'}\colon P\ell'\to P(\ell^-)\), and let
  \(\tau^P_\ell\colon P(\ell^-)\to P\ell\) be the map induced by the cocone
  \(\bigl(P(\ell'\le\ell)\bigr)_{\ell'<\ell}\). At \(\ell=0\) the diagram is
  empty and \(P(0^-)=0\). The step \emph{\(\tau_{n,\ell}\) Is the Transition
  Map} in the proof of \Cref{thm:les} identifies \(\tau_{n,\ell}\), read with
  coefficients in \(\mathbb{F}\) from here on, with \(\tau^M_\ell\) for the
  module \(M\) of \Cref{itm:crit-bars}. Every morphism \(\theta\colon P\to P'\)
  of persistence modules and every length \(\ell\in[0,\infty)\) put the two
  left limits and the two values into the square
  \begin{equation}\label{eq:tau-natural}
  \begin{tikzcd}[row sep=2.1em, column sep=3.0em, ampersand replacement=\&]
    P(\ell^-)\arrow[r,"\tau^P_\ell"]
      \arrow[d,"\operatorname*{colim}_{\ell'<\ell}\theta_{\ell'}"']
      \&P\ell\arrow[d,"\theta_\ell"]\\
    P'(\ell^-)\arrow[r,"\tau^{P'}_\ell"']\&P'\ell
  \end{tikzcd}
  \end{equation}
  commute, because both composites precomposed with \(c^P_{\ell'}\) give
  \(P'(\ell'\le\ell)\circ\theta_{\ell'}\), and a map out of a colimit is
  determined by its composites with the structure maps. A coproduct is a colimit
  over a discrete index category and colimits commute with colimits, so for a
  family \((P_s)_{s\in S}\) of persistence modules the canonical comparison
  \(\bigoplus_{s\in S}P_s(\ell^-)\to\bigl(\bigoplus_{s\in S}P_s\bigr)(\ell^-)\)
  is an isomorphism carrying \(\bigoplus_{s\in S}\tau^{P_s}_\ell\) to
  \(\tau^{\oplus_{s}P_s}_\ell\). Take for \(\theta\) the isomorphism
  \(\Theta\) of \Cref{itm:crit-bars}, whose colimit is again an isomorphism by
  functoriality. Both vertical maps of \eqref{eq:tau-natural} are then
  invertible, so \(\tau_{n,\ell}\) is injective, surjective or bijective if and
  only if \(\bigoplus_{s\in S}\tau^{\mathbb{F}_{J_s}}_\ell\) is, and
  \Cref{itm:crit-summands} turns each into a condition on the
  summands, one for each index \(s\in S\) of the barcode multiset
  \(\mathcal{B}\) of \(M\).
\item\label{itm:crit-endpoint} \textbf{What Happens at an Endpoint.} Let
  \(J=[b,b')\subseteq[0,\infty)\) with \(0\le b<b'\le\infty\), the case
  \(b'=\infty\) being \([b,\infty)\), and let \(\ell\in[0,\infty)\). The interval
  module has \((\mathbb{F}_J)\ell=\mathbb{F}\) when \(b\le\ell<b'\) and \(0\)
  otherwise, by definition. Its left limit has
  \((\mathbb{F}_J)(\ell^-)=\mathbb{F}\) when \(b<\ell\le b'\) and \(0\) otherwise,
  which three cases give. For \(\ell\le b\) every \(\ell'<\ell\) has
  \((\mathbb{F}_J)\ell'=0\), so the colimit vanishes. For \(b<\ell\le b'\) the set
  \(\{\ell'\mid b\le\ell'<\ell\}\) is nonempty and cofinal in
  \(\{\ell'\mid\ell'<\ell\}\), and the diagram restricted to it is constant at
  \(\mathbb{F}\) with identity transition maps, so the colimit is \(\mathbb{F}\)
  and every \(c^{\mathbb{F}_J}_{\ell'}\) on that set is an isomorphism. For
  \(b'<\ell\) the set \(\{\ell'\mid b'\le\ell'<\ell\}\) is nonempty and cofinal and
  the diagram is constant at \(0\) on it. Five cases in \(\ell\) exhaust the
  possibilities, and the two values computed above settle the map
  \(\tau^{\mathbb{F}_J}_\ell\) in every one of them but the third row of the
  table that follows:
  \begin{equation}\label{eq:endpoint-table}
  \begin{array}{@{}llll@{}}
    \toprule
    \ell & (\mathbb{F}_J)(\ell^-) & (\mathbb{F}_J)\ell
      & \tau^{\mathbb{F}_J}_\ell\\
    \midrule
    \ell<b & 0 & 0 & \text{bijective}\\
    \rowcolor{tintright}\ell=b & 0 & \mathbb{F} & \text{not surjective}\\
    b<\ell<b' & \mathbb{F} & \mathbb{F} & \text{bijective}\\
    \rowcolor{tintleft}\ell=b'<\infty & \mathbb{F} & 0 & \text{not injective}\\
    b'<\ell & 0 & 0 & \text{bijective}\\
    \bottomrule
  \end{array}
  \end{equation}
  In that third row the map is bijective because
  \(\tau^{\mathbb{F}_J}_\ell\circ c^{\mathbb{F}_J}_{\ell'}=(\mathbb{F}_J)(\ell'\le\ell)\)
  is the identity of \(\mathbb{F}\) whenever \(b\le\ell'<\ell\), and
  \(c^{\mathbb{F}_J}_{\ell'}\) is an isomorphism there. Every \(J_s\) has the
  shape assumed for \(J\) here, by \Cref{itm:crit-shape}, so the births of \(M\)
  named after \Cref{thm:barcode} are the left endpoints of the bars, the deaths
  are their finite right endpoints, and a bar has no further endpoints. Combining \eqref{eq:endpoint-table} with
  \Cref{itm:crit-colim}, a birth at \(\ell\) contributes a summand that is not
  surjective and a death at \(\ell\) a summand that is not injective, and a
  direct sum of maps is surjective or injective only if every summand is, whence
  the two implications
  \begin{equation}\label{eq:endpoint-split}
  \begin{aligned}
    \ell\ \text{a birth of}\ M
      &\;\Longrightarrow\; \tau_{n,\ell}\ \text{is not surjective},\\
    \ell\ \text{a death of}\ M
      &\;\Longrightarrow\; \tau_{n,\ell}\ \text{is not injective}.
  \end{aligned}
  \end{equation}
  The running example of \Cref{fig:critical} has one birth, at \(\ell=0\), and
  two deaths, at \(\ell=1\) and \(\ell=2\), one for each of its two bars of
  finite length and none elsewhere.
\item\label{itm:crit-inclusion} \textbf{The Inclusion.} Let \(X\) be an object
  of \(\Met\) with \(\Ob X\) finite, let \(\mathbb{F}\) be a field and let
  \(n\ge 0\). A birth \(\ell\) of \(\PH_n(X;\mathbb{F})\) makes
  \(\tau_{n,\ell}\) fail to be surjective by \eqref{eq:endpoint-split}, so the
  second half of \Cref{itm:crit-iso} fails and
  \hlright{\MH_{n,\ell}(X;\mathbb{F})\neq0}. A death \(\ell\) makes
  \(\tau_{n,\ell}\) fail to be injective, so the first half fails and
  \hlleft{\MH_{n+1,\ell}(X;\mathbb{F})\neq0}. Every birth lies in the first set
  of the union of \Cref{cor:critical} and every death in the second, so every
  endpoint lies in that union. In the running example of \Cref{fig:critical} the
  two sets are \(\{0\}\) and \(\{1,2\}\), and they meet the births and the
  deaths. At \(\ell=3\) both magnitude groups vanish and the
  barcode does not change, although \(3\) is a block boundary.\qedhere
\end{enumerate}
\end{proof}

\section{Proofs of \texorpdfstring{\Cref{sec:barcodes}}{Section 6}}\label{app:barcodes}

\begin{proof}[Proof of \Cref{thm:barcode}]~
\begin{itemize}
\item \textbf{Existence.} A functor \(M\colon\Vfin^{\op}\to\Vect_{\mathbb{F}}\)
  is a persistence module over the totally ordered set \([0,\infty)\), because
  \(\Vfin^{\op}\) is that set read as a category. Its subset
  \(\mathbb{Q}\cap[0,\infty)\) is countable and dense in the order topology,
  which is the standing hypothesis of
  \cite[\S1]{CrawleyBoevey2015Decomposition}, tameness of \(M\) says that \(M\)
  is pointwise finite-dimensional, and the interval modules of
  \Cref{sec:barcodes} are the interval modules of that paper. So \(M\) is a
  direct sum of interval modules \cite[Thm.~1.1]{CrawleyBoevey2015Decomposition}:
  there are a set \(S\), a family \((J_s)_{s\in S}\) of intervals and an
  isomorphism \(\bigoplus_{s\in S}\mathbb{F}_{J_s}\cong M\), and \(\mathcal{B}\)
  is the multiset that the family \((J_s)_{s\in S}\) indexes. What remains is to
  pin that family down up to a bijection of index sets.
\item \textbf{The Endomorphism Ring of an Interval Module.} Let
  \(J\subseteq[0,\infty)\) be an interval and let
  \(\theta\colon\mathbb{F}_J\to\mathbb{F}_J\) be a natural transformation. For
  \(\ell\notin J\) the space \((\mathbb{F}_J)\ell\) is zero and \(\theta_\ell=0\).
  For \(\ell\in J\) the map \(\theta_\ell\colon\mathbb{F}\to\mathbb{F}\) is
  multiplication by the scalar \(c_\ell=\theta_\ell(1)\). On a morphism
  \(\ell\le\ell'\) of \(\Vfin^{\op}\) whose two ends lie in \(J\), naturality of
  \(\theta\) is the commuting square
  \begin{equation}\label{eq:end-natural}
  \begin{tikzcd}[row sep=2.1em, column sep=3.2em, ampersand replacement=\&]
    (\mathbb{F}_J)\ell\arrow[r,"1_{\mathbb{F}}"]\arrow[d,"\theta_\ell"']
      \&(\mathbb{F}_J)\ell'\arrow[d,"\theta_{\ell'}"]\\
    (\mathbb{F}_J)\ell\arrow[r,"1_{\mathbb{F}}"']\&(\mathbb{F}_J)\ell'
  \end{tikzcd}
  \end{equation}
  whose two horizontal maps are the identity of \(\mathbb{F}\), because the
  transition maps of \(\mathbb{F}_J\) between nonzero terms are. Reading
  \eqref{eq:end-natural} on \(1\in\mathbb{F}\) gives \(c_\ell=c_{\ell'}\). Any
  two elements of \(J\) are comparable and \(J\) is nonempty, so one scalar
  \(c\in\mathbb{F}\) carries every component. In the other direction each
  \(c\in\mathbb{F}\) defines such a \(\theta\), since on a morphism with both
  ends in \(J\) the two composites of \eqref{eq:end-natural} are multiplication
  by \(c\), and otherwise one of the two spaces is zero and both composites
  vanish. Taken together, the two directions make the map of scalars into
  endomorphisms of the interval module \(\mathbb{F}_J\) given by
  \begin{equation}\label{eq:end-interval}
    \Xi\colon\mathbb{F}\longrightarrow\End(\mathbb{F}_J),
    \qquad
    c\longmapsto\bigl(c\cdot 1_{(\mathbb{F}_J)\ell}\bigr)_{\ell\in[0,\infty)}
  \end{equation}
  a bijection, surjective by the first direction and injective because \(c\) is
  recovered from \(\Xi(c)\) as its value on \(1\) at any \(\ell\in J\). It is
  additive, it sends \(1\) to the identity, and it sends a product \(cc'\) to the
  composite \(\Xi(c)\circ\Xi(c')\), the two agreeing component by component and
  multiplying by \(cc'\) where \((\mathbb{F}_J)\ell=\mathbb{F}\), so the map
  \(\Xi\) is itself an isomorphism of rings.
\item \textbf{Locality and Indecomposability.} A ring is local when it has a
  unique maximal proper left ideal. Every nonzero element of the field
  \(\mathbb{F}\) is a unit, so the only left ideals of \(\mathbb{F}\) are \(0\)
  and \(\mathbb{F}\), and \(0\) is the only proper one. By
  \eqref{eq:end-interval} the ring \(\End(\mathbb{F}_J)\) is local. Suppose now
  \(\mathbb{F}_J\cong N\oplus N'\) in \(\Vect_{\mathbb{F}}^{\Vfin^{\op}}\), with
  the four maps of that biproduct arranged as
  \begin{equation}\label{eq:biproduct}
  \begin{tikzcd}[column sep=3.6em, ampersand replacement=\&]
    N\arrow[r,shift left=0.55ex,hook,"\kappa"]
      \&\mathbb{F}_J\arrow[l,shift left=0.55ex,two heads,"p"]
        \arrow[r,shift left=0.55ex,two heads,"p'"]
      \&N'\arrow[l,shift left=0.55ex,hook',"\kappa'"]
  \end{tikzcd}
  \qquad
  p\kappa=1_N,\quad p'\kappa'=1_{N'},\quad
  \kappa p+\kappa'p'=1_{\mathbb{F}_J}.
  \end{equation}
  Put \(\varepsilon=\kappa p\in\End(\mathbb{F}_J)\), idempotent because
  \(\varepsilon^2=\kappa(p\kappa)p=\kappa p\), whose scalar
  \(c=\Xi^{-1}\varepsilon\) satisfies \(c^2=c\), so \(c(c-1)=0\) and
  \(c\in\{0,1\}\) in a field. If \(c=0\) then \(\varepsilon=0\) and
  \(1_N=p\kappa p\kappa=p\varepsilon\kappa=0\), so the summand \(N\) is zero.
  If \(c=1\) then \(\kappa'p'=1_{\mathbb{F}_J}-\varepsilon=0\) and
  \(1_{N'}=p'\kappa'p'\kappa'=0\), so the summand \(N'\) is zero. One of the two
  is zero in either case, and \(\mathbb{F}_J\) is itself nonzero because \(J\)
  is nonempty, so the interval module \(\mathbb{F}_J\) is indecomposable in
  the category \(\Vect_{\mathbb{F}}^{\Vfin^{\op}}\) of persistence modules.
\item \textbf{Uniqueness.} Let two decompositions of the module \(M\) into
  interval modules be given,
  \begin{equation}\label{eq:two-decompositions}
  \begin{tikzcd}[column sep=3.0em, ampersand replacement=\&]
    \bigoplus_{s\in S}\mathbb{F}_{J_s}\arrow[r,"\cong"]\&M
      \&\bigoplus_{t\in T}\mathbb{F}_{J'_t}\arrow[l,"\cong"'] .
  \end{tikzcd}
  \end{equation}
  Every summand on either side has local endomorphism ring and is
  indecomposable, by the two steps above, so the
  Krull--Remak--Schmidt--Azumaya theorem
  \cite[Thm.~1]{Azumaya1950KrullRemakSchmidt}, applied in the additive category
  \(\Vect_{\mathbb{F}}^{\Vfin^{\op}}\), supplies a bijection
  \(\beta\colon S\to T\) with
  \(\mathbb{F}_{J_s}\cong\mathbb{F}_{J'_{\beta(s)}}\) for every \(s\in S\).
  That theorem is invoked for persistence modules in this way in
  \cite[\S1]{CrawleyBoevey2015Decomposition}, and
  Botnan and Crawley-Boevey~\cite[Thm.~1.1]{BotnanCrawleyBoevey2020Decomposition} prove the existence of
  a decomposition into indecomposables with local endomorphism rings for
  pointwise finite-dimensional persistence modules over any small indexing
  category, which covers \(\Vect_{\mathbb{F}}^{\Vfin^{\op}}\). An isomorphism
  \(\theta\colon\mathbb{F}_J\to\mathbb{F}_{J'}\) of persistence modules has every
  component \(\theta_\ell\) an isomorphism of vector spaces, so
  \(\dim_{\mathbb{F}}(\mathbb{F}_J)\ell=\dim_{\mathbb{F}}(\mathbb{F}_{J'})\ell\)
  for every \(\ell\in[0,\infty)\), the left side being \(1\) or \(0\) according
  as \(\ell\) lies in \(J\) or not and the right side according as \(\ell\) lies
  in \(J'\) or not. The two intervals agree, the bijection
  \(\beta\) matches equal intervals, and the multiset \(\mathcal{B}\) is
  determined by \(M\) up to bijection of index sets.\qedhere
\end{itemize}
\end{proof}

\begin{proof}[Proof of \Cref{prop:interleaving-vcat}]~
\begin{itemize}
\item \textbf{Notation.} For \(M,N\colon\Vfin^{\op}\to\A\) collect the
  interleaving parameters into the set
  \begin{equation}\label{eq:int-set}
    S(M,N)=\bigl\{\delta\in[0,\infty)\bigm|
      \text{a }\delta\text{-interleaving of }M\text{ and }N\text{ exists}\bigr\},
  \end{equation}
  so that \(d_I(M,N)=\inf S(M,N)\) with \(\inf\varnothing=\infty\), which is
  \Cref{def:interleaving} read as one set. For \(\delta\in[0,\infty)\) let
  \(\Sigma_\delta\colon\Vfin^{\op}\to\Vfin^{\op}\) be the shift functor, with
  \(\Sigma_\delta\ell=\ell+\delta\) on objects and
  \(\Sigma_\delta(\ell\le\ell')=(\ell+\delta\le\ell'+\delta)\) on morphisms.
  Shifts compose strictly, \(\Sigma_\delta\Sigma_\varepsilon
  =\Sigma_{\delta+\varepsilon}\) and \(\Sigma_0=1\), and for each \(\delta\)
  there is one and only one natural transformation
  \(\sigma_\delta\colon 1\Rightarrow\Sigma_\delta\), the one whose component at
  \(\ell\) is the morphism \(\ell\le\ell+\delta\) of \(\Vfin^{\op}\): its
  naturality squares commute, as every diagram in a thin category does. A family
  \(\varphi_\ell\colon M\ell\to N(\ell+\delta)\) natural in \(\ell\) is a
  natural transformation \(\varphi\colon M\Rightarrow N\Sigma_\delta\), and \Cref{def:interleaving} reads
  \begin{equation}\label{eq:int-conditions}
    (\psi\Sigma_\delta)\cdot\varphi=M\sigma_{2\delta},
    \qquad
    (\varphi\Sigma_\delta)\cdot\psi=N\sigma_{2\delta},
  \end{equation}
  where \(\cdot\) is vertical composition, \(\psi\Sigma_\delta\) has the
  components \(\psi_{\ell+\delta}\colon N(\ell+\delta)\to M(\ell+2\delta)\),
  and \(M\sigma_{2\delta}\) has the components
  \(M(\ell\le\ell+2\delta)\colon M\ell\to M(\ell+2\delta)\), one for each \(\ell\).
\item \textbf{Diagonal.} Let \(M\colon\Vfin^{\op}\to\A\). Take \(\delta=0, \varphi=\psi=1_M\colon M\Rightarrow M=M\Sigma_0\). \eqref{eq:int-conditions} holds,
  \[
    (1_M\Sigma_0)\cdot 1_M=1_M=M1=M\sigma_0 ,
  \]
  because the component of \(\sigma_0\) at \(\ell\) is the identity of \(\ell\)
  and \(M\), a functor, sends it to \(1_{M\ell}\). So \(0\in S(M,M)\), and the
  infimum of a subset of \([0,\infty)\) that contains \(0\) is \(0\), which
  computes the diagonal, the first of the three conditions, as
  \(d_I(M,M)=\inf S(M,M)=0\).
\item \textbf{Symmetry.} The assignment \((\varphi,\psi)\mapsto(\psi,\varphi)\)
  sends a \(\delta\)-interleaving of \(M\) and \(N\) to a pair that satisfies
  the two conditions \eqref{eq:int-conditions} in the other order, which is a
  \(\delta\)-interleaving of \(N\) and \(M\), and it is its own inverse. So
  \(S(M,N)=S(N,M)\) as subsets of \([0,\infty)\), and applying \(\inf\) to both
  sides gives \(d_I(M,N)=\inf S(M,N)=\inf S(N,M)=d_I(N,M)\).
\item \textbf{Interleavings Compose.} For \(M,N,P\colon\Vfin^{\op}\to\A\) let
  \(\delta\in S(M,N)\) through the pair \((\varphi,\psi)\) and
  \(\varepsilon\in S(N,P)\) through the pair \((\varphi',\psi')\). Put the two
  vertical composites
  \[
    \Phi=(\varphi'\Sigma_\delta)\cdot\varphi\colon
      M\Longrightarrow P\Sigma_\varepsilon\Sigma_\delta
      =P\Sigma_{\delta+\varepsilon},
    \qquad
    \Psi=(\psi\Sigma_\varepsilon)\cdot\psi'\colon
      P\Longrightarrow M\Sigma_\delta\Sigma_\varepsilon
      =M\Sigma_{\delta+\varepsilon},
  \]
  with components \(\Phi_\ell=\varphi'_{\ell+\delta}\varphi_\ell\) and
  \(\Psi_\ell=\psi_{\ell+\varepsilon}\psi'_\ell\), vertical composites of
  whiskered natural transformations and so natural. The first condition of
  \eqref{eq:int-conditions} for the pair \((\Phi,\Psi)\) is the chain of
  equalities of natural transformations \(M\Rightarrow M\Sigma_{2(\delta+\varepsilon)}\),
  \begin{align}
    (\Psi\Sigma_{\delta+\varepsilon})\cdot\Phi
    &=(\psi\Sigma_\varepsilon\Sigma_\varepsilon\Sigma_\delta)
      \cdot(\psi'\Sigma_\varepsilon\Sigma_\delta)
      \cdot(\varphi'\Sigma_\delta)\cdot\varphi
      &&\text{definition of }\Phi,\Psi,\notag\\
    &=(\psi\Sigma_{2\varepsilon}\Sigma_\delta)
      \cdot(N\sigma_{2\varepsilon}\Sigma_\delta)\cdot\varphi
      &&\text{interleaving of }N\text{ and }P,\notag\\
    &=(M\Sigma_\delta\sigma_{2\varepsilon}\Sigma_\delta)
      \cdot(\psi\Sigma_\delta)\cdot\varphi
      &&\text{interchange},\label{eq:int-comp}\\
    &=(M\Sigma_\delta\sigma_{2\varepsilon}\Sigma_\delta)
      \cdot(M\sigma_{2\delta})
      &&\text{interleaving of }M\text{ and }N,\notag\\
    &=M\sigma_{2(\delta+\varepsilon)},
      &&\Vfin^{\op}\text{ is thin,}\notag
  \end{align}
  whose interchange step slides the two 2-cells \(\psi\) and
  \(\sigma_{2\varepsilon}\), which sit on different wires, past one another, and
  whose last step composes two transformations \(1\Rightarrow\Sigma_{2(\delta
  +\varepsilon)}\) of which there is only one. The same five lines with
  \(M,\varphi,\psi,\delta\) and \(P,\psi',\varphi',\varepsilon\) exchanged give
  \((\Phi\Sigma_{\delta+\varepsilon})\cdot\Psi=P\sigma_{2(\delta+\varepsilon)}\).
  So \((\Phi,\Psi)\) is a \((\delta+\varepsilon)\)-interleaving of \(M\) and
  \(P\), and \(\delta+\varepsilon\in S(M,P)\). In the calculus of the proof of
  \Cref{lem:term-monad}, with the codomain \(\A\) left of the module wire and
  the shift wires of \(\Vfin^{\op}\) right of it, the transformation
  \(\sigma_{2\varepsilon}\colon 1\Rightarrow\Sigma_\varepsilon\Sigma_\varepsilon\)
  is a cap, a dot with the two wires of \(\Sigma_\varepsilon\Sigma_\varepsilon\)
  below it and none above. Cool tracks the pair \((\varphi,\psi)\) of \(M\) and
  \(N\), warm the pair \((\varphi',\psi')\) of \(N\) and \(P\), and
  \eqref{eq:int-comp} reads
  \[
  \renewcommand\sdxunit{0.675}\renewcommand\sdyunit{0.675}
  \begin{sdiag}{4}
    \sdcanvas{sddomain}{-0.47}{2.95}
    \sdstrip{sdcodomain}{-0.85}{0}
    \sdwire{0}{0}
    \node[sddotsty,fill=complement] at (0,-0.8) {};
    \node[sdlab,complement,left=\sdsidegap*\sdxunit cm] at (0,-0.8) {$\varphi$};
    \sdleg[sd,complement]{0}{-0.8}{2.95}
    \node[sddotsty,fill=accent] at (0,-1.6) {};
    \node[sdlab,accent,left=\sdsidegap*\sdxunit cm] at (0,-1.6) {$\varphi'$};
    \sdleg[sd,accent]{0}{-1.6}{2.25}
    \node[sddotsty,fill=accent] at (0,-2.4) {};
    \node[sdlab,accent,left=\sdsidegap*\sdxunit cm] at (0,-2.4) {$\psi'$};
    \sdleg[sd,accent]{0}{-2.4}{1.55}
    \node[sddotsty,fill=complement] at (0,-3.2) {};
    \node[sdlab,complement,left=\sdsidegap*\sdxunit cm] at (0,-3.2) {$\psi$};
    \sdleg[sd,complement]{0}{-3.2}{0.85}
    \node[sdlab,font=\footnotesize] at (-0.55,-1.2) {$N$};
    \node[sdlab,font=\footnotesize] at (-0.55,-2.0) {$P$};
    \node[sdlab,font=\footnotesize] at (-0.55,-2.8) {$N$};
    \sdtop{0}{M}
    \sdbot{0}{M}
    \node[sdlab,font=\footnotesize,complement,below=\sdlabelgap*\sdyunit cm] at (0.85,\sdfloor) {$\Sigma_\delta$};
    \node[sdlab,font=\footnotesize,accent,below=\sdlabelgap*\sdyunit cm] at (1.55,\sdfloor) {$\Sigma_\varepsilon$};
    \node[sdlab,font=\footnotesize,accent,below=\sdlabelgap*\sdyunit cm] at (2.25,\sdfloor) {$\Sigma_\varepsilon$};
    \node[sdlab,font=\footnotesize,complement,below=\sdlabelgap*\sdyunit cm] at (2.95,\sdfloor) {$\Sigma_\delta$};
    \node[sdname] at (-0.55,-0.32) {$\A$};
    \node[sdname] at (2.38,-0.32) {$\Vfin^{\op}$};
  \end{sdiag}
  \sdeqq
  \begin{sdiag}{4}
    \sdcanvas{sddomain}{-0.47}{2.95}
    \sdstrip{sdcodomain}{-0.85}{0}
    \sdwire{0}{0}
    \node[sddotsty,fill=complement] at (0,-1.0) {};
    \node[sdlab,complement,left=\sdsidegap*\sdxunit cm] at (0,-1.0) {$\varphi$};
    \sdleg[sd,complement]{0}{-1.0}{2.95}
    \node[sddotsty,fill=accent] at (1.9,-2.0) {};
    \node[sdlab,accent,above=\sdsidegap*\sdxunit cm] at (1.9,-2.0) {$\sigma_{2\varepsilon}$};
    \sdleg[sd,accent]{1.9}{-2.0}{1.55}
    \sdleg[sd,accent]{1.9}{-2.0}{2.25}
    \node[sddotsty,fill=complement] at (0,-3.0) {};
    \node[sdlab,complement,left=\sdsidegap*\sdxunit cm] at (0,-3.0) {$\psi$};
    \sdleg[sd,complement]{0}{-3.0}{0.85}
    \node[sdlab,font=\footnotesize] at (-0.55,-2.0) {$N$};
    \sdtop{0}{M}
    \sdbot{0}{M}
    \node[sdlab,font=\footnotesize,complement,below=\sdlabelgap*\sdyunit cm] at (0.85,\sdfloor) {$\Sigma_\delta$};
    \node[sdlab,font=\footnotesize,accent,below=\sdlabelgap*\sdyunit cm] at (1.55,\sdfloor) {$\Sigma_\varepsilon$};
    \node[sdlab,font=\footnotesize,accent,below=\sdlabelgap*\sdyunit cm] at (2.25,\sdfloor) {$\Sigma_\varepsilon$};
    \node[sdlab,font=\footnotesize,complement,below=\sdlabelgap*\sdyunit cm] at (2.95,\sdfloor) {$\Sigma_\delta$};
    \node[sdname] at (-0.55,-0.32) {$\A$};
    \node[sdname] at (2.38,-0.32) {$\Vfin^{\op}$};
  \end{sdiag}
  \sdeqq
  \begin{sdiag}{4}
    \sdcanvas{sddomain}{-0.47}{2.95}
    \sdstrip{sdcodomain}{-0.85}{0}
    \sdwire{0}{0}
    \node[sddotsty,fill=complement] at (0,-1.0) {};
    \node[sdlab,complement,left=\sdsidegap*\sdxunit cm] at (0,-1.0) {$\varphi$};
    \sdleg[sd,complement]{0}{-1.0}{2.95}
    \node[sddotsty,fill=complement] at (0,-2.0) {};
    \node[sdlab,complement,left=\sdsidegap*\sdxunit cm] at (0,-2.0) {$\psi$};
    \sdleg[sd,complement]{0}{-2.0}{0.85}
    \node[sddotsty,fill=accent] at (1.9,-3.0) {};
    \node[sdlab,accent,above=\sdsidegap*\sdxunit cm] at (1.9,-3.0) {$\sigma_{2\varepsilon}$};
    \sdleg[sd,accent]{1.9}{-3.0}{1.55}
    \sdleg[sd,accent]{1.9}{-3.0}{2.25}
    \node[sdlab,font=\footnotesize] at (-0.55,-1.5) {$N$};
    \sdtop{0}{M}
    \sdbot{0}{M}
    \node[sdlab,font=\footnotesize,complement,below=\sdlabelgap*\sdyunit cm] at (0.85,\sdfloor) {$\Sigma_\delta$};
    \node[sdlab,font=\footnotesize,accent,below=\sdlabelgap*\sdyunit cm] at (1.55,\sdfloor) {$\Sigma_\varepsilon$};
    \node[sdlab,font=\footnotesize,accent,below=\sdlabelgap*\sdyunit cm] at (2.25,\sdfloor) {$\Sigma_\varepsilon$};
    \node[sdlab,font=\footnotesize,complement,below=\sdlabelgap*\sdyunit cm] at (2.95,\sdfloor) {$\Sigma_\delta$};
    \node[sdname] at (-0.55,-0.32) {$\A$};
    \node[sdname] at (2.38,-0.32) {$\Vfin^{\op}$};
  \end{sdiag}
  \sdeqq
  \begin{sdiag}{4}
    \sdcanvas{sddomain}{-0.47}{2.95}
    \sdstrip{sdcodomain}{-0.85}{0}
    \sdwire{0}{0}
    \node[sddotsty,fill=complement] at (1.9,\sdgoldup) {};
    \node[sdlab,complement,above=\sdsidegap*\sdxunit cm] at (1.9,\sdgoldup) {$\sigma_{2\delta}$};
    \sdleg[sd,complement]{1.9}{\sdgoldup}{0.85}
    \sdleg[sd,complement]{1.9}{\sdgoldup}{2.95}
    \node[sddotsty,fill=accent] at (1.9,\sdgold) {};
    \node[sdlab,accent,above=\sdsidegap*\sdxunit cm] at (1.9,\sdgold) {$\sigma_{2\varepsilon}$};
    \sdleg[sd,accent]{1.9}{\sdgold}{1.55}
    \sdleg[sd,accent]{1.9}{\sdgold}{2.25}
    \sdtop{0}{M}
    \sdbot{0}{M}
    \node[sdlab,font=\footnotesize,complement,below=\sdlabelgap*\sdyunit cm] at (0.85,\sdfloor) {$\Sigma_\delta$};
    \node[sdlab,font=\footnotesize,accent,below=\sdlabelgap*\sdyunit cm] at (1.55,\sdfloor) {$\Sigma_\varepsilon$};
    \node[sdlab,font=\footnotesize,accent,below=\sdlabelgap*\sdyunit cm] at (2.25,\sdfloor) {$\Sigma_\varepsilon$};
    \node[sdlab,font=\footnotesize,complement,below=\sdlabelgap*\sdyunit cm] at (2.95,\sdfloor) {$\Sigma_\delta$};
    \node[sdname] at (-0.55,-0.32) {$\A$};
    \node[sdname] at (2.38,-0.32) {$\Vfin^{\op}$};
  \end{sdiag}
  \,.
  \]
  No two wires cross: a crossing would name a 2-cell exchanging the two
  functors it swaps, and the calculus supplies none, even for shifts that
  commute. Each step slides a dot past a dot on another wire, the interchange
  law, and each pair contracts to the cap of its own shift. The last picture
  is \(M\) beside the nested caps \(\sigma_{2\delta}\) and
  \(\sigma_{2\varepsilon}\), one 2-cell \(M\sigma_{2(\delta+\varepsilon)}\)
  because \(\Vfin^{\op}\) is thin, with component
  \(M(\ell\le\ell+2(\delta+\varepsilon))\) at \(\ell\).
\item \textbf{Triangle Inequality.} If \(S(M,N)\) or \(S(N,P)\) is empty, then
  \(d_I(M,N)+d_I(N,P)=\infty\) and there is nothing to prove. Otherwise the
  previous item gives the containment
  \(S(M,N)+S(N,P)\subseteq S(M,P)\) of subsets of \([0,\infty)\), where the sum
  of two sets is the set
  \(\{\delta+\varepsilon\mid\delta\in S(M,N),\,\varepsilon\in S(N,P)\}\) of
  their sums, and the infima line up as
  \begin{align}
    d_I(M,P)=\inf S(M,P)
      &\le\inf\bigl(S(M,N)+S(N,P)\bigr)
      &&\text{containment},\notag\\
      &=\inf S(M,N)+\inf S(N,P)
      &&\text{infima add},\label{eq:int-inf}\\
      &=d_I(M,N)+d_I(N,P),
      &&\text{definition of }d_I.\notag
  \end{align}
  Infima of nonempty subsets \(A,B\subseteq[0,\infty)\) add: \(\inf A+\inf B\)
  bounds every sum \(\delta+\varepsilon\) with \(\delta\in A\) and
  \(\varepsilon\in B\) from below, and for every \(\eta>0\) there are
  \(\delta<\inf A+\eta/2\) and \(\varepsilon<\inf B+\eta/2\), so
  \(\inf(A+B)\le\inf A+\inf B+\eta\) for every \(\eta>0\).
\item \textbf{Enrichment.} By the unwinding in \Cref{sec:base}, a symmetric
  \(\V\)-category structure on the class of persistence modules in \(\A\) is a
  function \(d\colon\Ob(\A^{\Vfin^{\op}})\times\Ob(\A^{\Vfin^{\op}})\to\Rp\)
  with \(d(M,M)=0\), with \(d(M,P)\le d(M,N)+d(N,P)\) and with
  \(d(M,N)=d(N,M)\), the identity, the composition and the symmetry of the
  structure. The parts \emph{Diagonal}, \emph{Triangle Inequality} and
  \emph{Symmetry} supply the three conditions for \(d=d_I\).
\item \textbf{Separatedness Fails.} In \(\A=\Vect_{\mathbb{F}}\) take the
  interval modules \(M=\mathbb{F}_{[0,1)}\) and \(N=\mathbb{F}_{[0,1]}\). They
  are not isomorphic, since an isomorphism of persistence modules has
  isomorphisms as components and \(M(1)=0\) while \(N(1)=\mathbb{F}\). For
  \(\delta>1\) the zero families form a \(\delta\)-interleaving, both sides of
  each condition \eqref{eq:int-conditions} being zero maps, because \(\ell\) and
  \(\ell+2\delta\) never lie in one of the two intervals together. For
  \(0<\delta\le 1\) put
  \[
    \varphi_\ell=
    \begin{cases}
      1_{\mathbb{F}}, & \ell\le 1-\delta,\\
      0, & \text{otherwise,}
    \end{cases}
    \qquad
    \psi_\ell=
    \begin{cases}
      1_{\mathbb{F}}, & \ell<1-\delta,\\
      0, & \text{otherwise.}
    \end{cases}
  \]
  Both families are natural and satisfy \eqref{eq:int-conditions}: on each
  square and each condition, either every space in sight is \(\mathbb{F}\) and
  every map in sight is \(1_{\mathbb{F}}\), or the target of the composite is
  the zero space, or the first factor of the composite is a zero map, and the
  three cases are decided by comparing \(\ell\), \(\ell+\delta\) and
  \(\ell+2\delta\) with \(1\). So \(S(M,N)=(0,\infty)\) and \(d_I(M,N)=0\) with
  \(M\not\cong N\), and \(d_I\) becomes a metric only after passing to the
  quotient by the relation \(d_I=0\).\qedhere
\end{itemize}
\end{proof}

\begin{proof}[Proof of \Cref{thm:distortion}]~
\begin{itemize}
\item \textbf{One Set of Simplices, Two Length Functions.} The objects \(X\)
  and \(Y\) share their underlying set, \(\Ob X=\Ob Y\). An \(m\)-simplex of
  either length nerve is a map \(\mathbf{x}\colon[m]\to\Ob X\), written
  \(\mathbf{x}=\langle x_0,\dots,x_m\rangle\), and it is nondegenerate when
  \(x_{i-1}\neq x_i\) for \(1\le i\le m\), a condition that mentions no metric.
  So \(\Nv(X)_m=\Nv(Y)_m\) for every \(m\ge0\), with the same degeneracies, and
  on this one set of simplices live the two length functions
  \begin{gather*}
    \lambda_X,\lambda_Y\colon\textstyle\bigcup_{m\ge0}\Nv(X)_m\longrightarrow\Rp,\\
    \lambda_X\mathbf{x}=\sum_{i=1}^{m}d_X(x_{i-1},x_i),
    \qquad
    \lambda_Y\mathbf{x}=\sum_{i=1}^{m}d_Y(x_{i-1},x_i).
  \end{gather*}
\item \textbf{The Two Lengths Differ by at Most \(m\delta\).} Let
  \(\mathbf{x}=\langle x_0,\dots,x_m\rangle\) be an \(m\)-simplex. It has
  \(m\) consecutive pairs, so each of the two lengths is a sum of
  \(m\) distances, and adding the hypothesis
  \(d_Y(x_{i-1},x_i)\le d_X(x_{i-1},x_i)+\delta\) over \(1\le i\le m\) in
  \(\Rp\) gives
  \begin{equation}\label{eq:length-shift}
    \lambda_Y\mathbf{x}=\sum_{i=1}^{m}d_Y(x_{i-1},x_i)
      \le\sum_{i=1}^{m}\bigl(d_X(x_{i-1},x_i)+\delta\bigr)
      =\lambda_X\mathbf{x}+m\delta ,
  \end{equation}
  and the other hypothesis gives \(\lambda_X\mathbf{x}\le\lambda_Y\mathbf{x}+m\delta\)
  the same way. Put \(\Delta=(n+1)\delta\). Since \(m\delta\le\Delta\) for
  \(m\le n+1\), and since a bound in \(\Rp\) only weakens when the added
  constant grows, both bounds hold with \(\Delta\) in place of \(m\delta\) in
  every simplicial degree \(m\le n+1\) at once, which is the range of degrees
  the next part keeps.
\item \textbf{Truncation.} Within this proof abbreviate by
  \(C'_\bullet(X)\subseteq C_\bullet(X)\) the subcomplex with
  \(C'_m(X)=C_m(X)\) for \(m\le n+1\) and \(C'_m(X)=0\) for \(m>n+1\), a
  subcomplex because \(\partial\) lowers the degree, and let
  \(F_\ell C'_\bullet(X)=F_\ell C_\bullet(X)\cap C'_\bullet(X)\) be the
  filtration \eqref{eq:filtered-chains} read inside it. Cycles and boundaries
  in degree \(n\) are computed from the degrees \(n\) and \(n+1\) and the
  differentials between them and into degree \(n-1\), which the truncation
  leaves unchanged, so
  \(H_n\bigl(F_\ell C'_\bullet(X)\bigr)=H_n\bigl(F_\ell C_\bullet(X)\bigr)
  =\PH_n(X)(\ell)\) for every \(\ell\), and the same equality holds for \(Y\).
  The truncation therefore changes neither \(\PH_n(X)\) nor \(\PH_n(Y)\).
\item \textbf{The Interleaving Diagram.} For \(\ell\in[0,\infty)\) define on
  the common basis the two maps
  \[
    \varphi_\ell\colon F_\ell C'_\bullet(X)\longrightarrow
      F_{\ell+\Delta}C'_\bullet(Y),
    \qquad
    \psi_\ell\colon F_\ell C'_\bullet(Y)\longrightarrow
      F_{\ell+\Delta}C'_\bullet(X)
  \]
  by \(\varphi_\ell\mathbf{x}=\mathbf{x}\) and
  \(\psi_\ell\mathbf{x}=\mathbf{x}\). Both are well defined: a nondegenerate
  \(\mathbf{x}\) of degree \(m\le n+1\) with \(\lambda_X\mathbf{x}\le\ell\) has
  \(\lambda_Y\mathbf{x}\le\ell+\Delta\) by the second part, and with the roles
  of \(X\) and \(Y\) exchanged for \(\psi_\ell\). Both are chain maps: the
  differential of either side is \(\partial=\sum_{i=0}^{m}(-1)^id_i\) with the
  same face maps of the common simplices, and faces do not raise either length,
  by \Cref{lem:filtration}\Cref{itm:len-ops-stmt} for \(X\) and for \(Y\). Together with the
  inclusions \(\iota\colon F_\ell C'_\bullet\subseteq F_{\ell+\Delta}C'_\bullet\)
  of the filtration \eqref{eq:filtered-chains}, the maps \(\varphi_\ell\) and
  \(\psi_\ell\) form, for every \(\ell\), the diagram
  \begin{equation}\label{eq:dist-diagram}
  \begin{tikzcd}[row sep=3.7em, column sep=2.8em, ampersand replacement=\&,
                 labels={inner sep=1.6pt}]
    F_\ell C'_\bullet(X)\arrow[r,hook,"\iota"]
      \arrow[dr,accent,"\varphi_\ell"{pos=0.15, anchor=north east}]
    \&F_{\ell+\Delta}C'_\bullet(X)\arrow[r,hook,"\iota"]
      \arrow[dr,accent,"\varphi_{\ell+\Delta}"{pos=0.15, anchor=north east}]
    \&F_{\ell+2\Delta}C'_\bullet(X)\\
    F_\ell C'_\bullet(Y)\arrow[r,hook,"\iota"']
      \arrow[ur,complement,"\psi_\ell"{pos=0.15, anchor=south east},
             crossing over]
    \&F_{\ell+\Delta}C'_\bullet(Y)\arrow[r,hook,"\iota"']
      \arrow[ur,complement,"\psi_{\ell+\Delta}"{pos=0.15, anchor=south east},
             crossing over]
    \&F_{\ell+2\Delta}C'_\bullet(Y),
  \end{tikzcd}
  \end{equation}
  in which every map, horizontal or diagonal, sends a basis simplex
  \(\mathbf{x}\) to the simplex \(\mathbf{x}\). Every triangle of
  \eqref{eq:dist-diagram} therefore commutes, and exchanging the two rows and
  the two letters \(\varphi\) and \(\psi\) is the symmetry that exchanges the
  two hypotheses on \(d_X\) and \(d_Y\). The two triangles that cross
  \eqref{eq:dist-diagram} from \(F_\ell\) to \(F_{\ell+2\Delta}\) read
  \[
    \psi_{\ell+\Delta}\circ\varphi_\ell=\iota\colon
      F_\ell C'_\bullet(X)\lhook\joinrel\longrightarrow F_{\ell+2\Delta}C'_\bullet(X),
    \qquad
    \varphi_{\ell+\Delta}\circ\psi_\ell=\iota\colon
      F_\ell C'_\bullet(Y)\lhook\joinrel\longrightarrow F_{\ell+2\Delta}C'_\bullet(Y),
  \]
  and the two quadrilaterals of \eqref{eq:dist-diagram} with two horizontal and
  two diagonal sides state that \(\varphi\) and \(\psi\) are natural in
  \(\ell\), one quadrilateral for each of the two families.
\item \textbf{Homology.} Apply \(H_n\) to \eqref{eq:dist-diagram} and read its
  corners through the truncation part. The upper row becomes the transition
  maps of \(\PH_n(X)\), the lower row those of \(\PH_n(Y)\), the diagonals two
  families of group homomorphisms, and the diagram itself
  \begin{equation}\label{eq:dist-homology}
  \begin{tikzcd}[row sep=3.7em, column sep=2.2em, ampersand replacement=\&,
                 labels={inner sep=1.6pt}]
    \PH_n(X)(\ell)\arrow[r,"\tau"]
      \arrow[dr,accent,"H_n(\varphi_\ell)"{pos=0.14, anchor=north east}]
    \&\PH_n(X)(\ell+\Delta)\arrow[r,"\tau"]
      \arrow[dr,accent,"H_n(\varphi_{\ell+\Delta})"{pos=0.14, anchor=north east}]
      \arrow[d,comm]
    \&\PH_n(X)(\ell+2\Delta)\\
    \PH_n(Y)(\ell)\arrow[r,"\tau"']
      \arrow[ur,complement,"H_n(\psi_\ell)"{pos=0.14, anchor=south east},
             crossing over]
    \&\PH_n(Y)(\ell+\Delta)\arrow[r,"\tau"']
      \arrow[ur,complement,"H_n(\psi_{\ell+\Delta})"{pos=0.14, anchor=south east},
             crossing over]
    \&\PH_n(Y)(\ell+2\Delta)\mathrlap{\,,}
  \end{tikzcd}
  \end{equation}
  in which every \(\tau\) is the transition map of the module named at its
  source. Every cell of \eqref{eq:dist-homology} is the \(H_n\)-image of a cell
  of \eqref{eq:dist-diagram} and therefore commutes, the circled arrow marking
  the two quadrilaterals, which share that centre and say that the two families
  are natural in \(\ell\). The two triangles read
  \(H_n(\psi_{\ell+\Delta})\circ H_n(\varphi_\ell)=\PH_n(X)(\ell\le\ell+2\Delta)\)
  and
  \(H_n(\varphi_{\ell+\Delta})\circ H_n(\psi_\ell)=\PH_n(Y)(\ell\le\ell+2\Delta)\),
  so the two families are a \(\Delta\)-interleaving of \(\PH_n(X)\) and
  \(\PH_n(Y)\). With the set \eqref{eq:int-set} of the preceding proof
  \((n+1)\delta=\Delta\in S\bigl(\PH_n(X),\PH_n(Y)\bigr)\), and the infimum
  \(d_I\bigl(\PH_n(X),\PH_n(Y)\bigr)\) is at most
  \((n+1)\delta\).\qedhere
\end{itemize}
\end{proof}

\section{Proofs of \texorpdfstring{\Cref{sec:composite}}{Section 7}}\label{app:composite}

\begin{proof}[Proof of \Cref{thm:composite}]~
\begin{enumerate}
\item\label{itm:comp-functor} The three factors of the claimed composite are
  functors. \(T_U=G_U\Free_U\colon\Met\to\Met\) is one by \Cref{thm:free}\Cref{itm:forgetfulfunctor},
  being a composite of the two adjoints. \(\Nv(-)^{\le-}\colon\Met\to
  \SSet^{\Vfin^{\op}}\) is one by \Cref{lem:filtration}: its part \Cref{itm:filt-functor} makes
  \(\Nv(X)^{\le-}\) an object of \(\SSet^{\Vfin^{\op}}\), its part \Cref{itm:filt-map} makes
  \(\Nv(F)^{\le-}\) a morphism there, and its part \Cref{itm:filt-nerve} gives the functor
  identities. Passing to normalised chains and taking homology are the functors
  \(C_\bullet\colon\SSet\to\mathrm{Ch}(\Ab)\) and
  \(H_n\colon\mathrm{Ch}(\Ab)\to\Ab\) applied pointwise in \(\ell\), so
  postcomposition with \(H_nC_\bullet\) is a functor
  \(\SSet^{\Vfin^{\op}}\to\Ab^{\Vfin^{\op}}\). A composite of functors is a
  functor, and its value at an object \(A\) of \(\Met\) is
  \[
    \ell\longmapsto H_n\bigl(C_\bullet(\Nv(T_UA)^{\le\ell})\bigr)
    =H_n\bigl(F_\ell C_\bullet(T_UA)\bigr)=\PH_n(T_UA)(\ell),
  \]
  the persistence module \(\PH_n(T_UA)\), since \(F_\ell C_\bullet(X)\) is the
  normalised chain complex of \(\Nv(X)^{\le\ell}\) for every \(X\) in \(\Met\).
\item\label{itm:comp-graded} The space \(T_UA\) is an object of \(\Met\) by
  \Cref{thm:free}\Cref{itm:forgetfulfunctor}, so \Cref{def:magnitude} applies to \(X=T_UA\) and gives
  the two equalities
  \[
    \gr_\ell C_\bullet(T_UA)=\MC_{\bullet,\ell}(T_UA),
    \qquad
    H_n\bigl(\gr_\ell C_\bullet(T_UA)\bigr)=\MH_{n,\ell}(T_UA),
  \]
  the second being the first claim of this part, an equality of groups rather
  than a mere isomorphism. Naturality in \(A\) is the statement that the
  assignment is the composite of the two functors \(T_U\colon\Met\to\Met\) and
  \(\MH_{n,\ell}\colon\Met\to\Ab\) of \Cref{lem:complex}\Cref{itm:mc-functor}: a morphism
  \(h\colon A\to B\) of \(\Met\) is sent to
  \(\MH_{n,\ell}(T_Uh)\colon\MH_{n,\ell}(T_UA)\to\MH_{n,\ell}(T_UB)\), and
  composites go to composites. For morphisms \(h\colon A\to B\) and
  \(g\colon B\to C\) of \(\Met\) the functor laws of \(T_U\) and of
  \(\MH_{n,\ell}\) give
  \begin{align*}
    \MH_{n,\ell}(T_Ug)\circ\MH_{n,\ell}(T_Uh)
      &=\MH_{n,\ell}(T_Ug\circ T_Uh)
      =\MH_{n,\ell}\bigl(T_U(g\circ h)\bigr),\\
    \MH_{n,\ell}(T_U1_A)&=1_{\MH_{n,\ell}(T_UA)},
  \end{align*}
  so the assignment \(A\mapsto\MH_{n,\ell}(T_UA)\) is a functor
  \(\Met\to\Ab\).\qedhere
\end{enumerate}
\end{proof}

\begin{proof}[Proof of \Cref{thm:theorymap}]~
\begin{enumerate}
\item\label{itm:tmap-restrict} \textbf{Restriction.} Let \(B\) be an object of
  \(\Mod(U')\), so that \(B\) is a quantitative algebra with \(B\models\varphi\)
  for every \(\varphi\in U'\). From \(U\subseteq U'\) it satisfies every
  \(\varphi\in U\), so \(B\) is an object of \(\Mod(U)\). Both categories are
  full subcategories of \(\QAlg\) by \Cref{def:qalg}, so \(\Mod(U')\) is a full
  subcategory of \(\Mod(U)\), and the two forgetful functors agree on it, both
  sending an algebra \((B,\Omega^B,d^B)\) to the space \((B,d^B)\). In
  particular \(\Free_{U'}A\) is an object of \(\Mod(U)\) with
  \(G_U\Free_{U'}A=G_{U'}\Free_{U'}A=T_{U'}A\), for every \(A\) in \(\Met\).
\item\label{itm:tmap-transpose} \textbf{The Transpose.} Let \(A\) be an object
  of \(\Met\) and \(B\) one of \(\Mod(U)\). The adjunction
  \(\Free_U\dashv G_U\) of \Cref{thm:free}\Cref{itm:forgetfulfunctor} is the bijection
  \begin{equation}\label{eq:tmap-transpose}
    \Met(A,G_UB)\;\cong\;\Mod(U)(\Free_UA,B),
    \qquad
    h\longmapsto h^\dagger,
  \end{equation}
  natural in \(A\) and \(B\), in which \(h^\dagger\) is the one and only
  morphism \(\Free_UA\to B\) of \(\Mod(U)\) whose underlying map fills the
  triangle of \(\Met\)
  \[
  \begin{tikzcd}[row sep=1.7em, column sep=3.2em, ampersand replacement=\&]
    A\arrow[r,"\eta^U_A"]\arrow[dr,"h"']
      \&T_UA\arrow[d,dashed,"G_U(h^\dagger)"]\\
    \&G_UB,
  \end{tikzcd}
  \]
  so that two morphisms \(g,g'\colon\Free_UA\to B\) of \(\Mod(U)\) with
  \(G_U(g)\circ\eta^U_A=G_U(g')\circ\eta^U_A\) are equal. The dagger $\dagger$ keeps this
  transpose apart from the \(\Omega\)-algebra extension \((-)^\sharp\) of
  \eqref{eq:extension}, which extends along \(\eta_W\) over \(\Set\) rather
  than along \(\eta^U_A\) over \(\Met\). Define
  \[
    q_A=G_U\bigl((\eta^{U'}_A)^\dagger\bigr)\colon T_UA\longrightarrow T_{U'}A,
  \]
  the transpose taken for \(B=\Free_{U'}A\), an object of \(\Mod(U)\) by
  \Cref{itm:tmap-restrict}. The triangle above reads
  \(q_A\circ\eta^U_A=\eta^{U'}_A\). For the uniqueness claimed in the
  statement, let \(q'\) be a natural transformation whose component at \(A\)
  underlies a morphism \(g_A\colon\Free_UA\to\Free_{U'}A\) of \(\Mod(U)\) with
  \(G_U(g_A)\circ\eta^U_A=\eta^{U'}_A\). Then \(g_A=(\eta^{U'}_A)^\dagger\) by
  the uniqueness in \eqref{eq:tmap-transpose}, so
  \(q'_A=G_U(g_A)=G_U\bigl((\eta^{U'}_A)^\dagger\bigr)=q_A\) one object at a
  time, and \(q'=q\).
\item\label{itm:tmap-generation} \textbf{Generation.} Let \(V\) be a
  quantitative equational theory over \((\Omega,\ar)\) and \(A\) an object of
  \(\Met\). We claim that \(T_VA\) is generated by \(\eta^V_A(A)\) as an
  \(\Omega\)-algebra: the only subset of \(T_VA\) that contains
  \(\eta^V_A(A)\) and is closed under every operation \(f^{T_VA}\) with
  \(f\in\Omega\) is \(T_VA\) itself. Let \(S\) be the smallest such subset, the
  intersection of the family of all of them, a family that contains \(T_VA\)
  and whose intersection again contains \(\eta^V_A(A)\) and is closed under
  every operation. Restricting the operations and the metric makes
  \(\bar S=(S,(f^{T_VA}|_{S^{\ar f}})_{f\in\Omega},d^{T_VA}|_{S\times S})\) a
  quantitative algebra, with nonexpansiveness inherited. It lies in
  \(\Mod(V)\): an assignment \(\iota\colon\Xi\to S\) is an assignment into
  \(T_VA\), and its extensions to \(\bar S\) and to \(T_VA\) agree, both being
  \(\Omega\)-homomorphisms \(T_\Omega\Xi\to T_VA\) that restrict to \(\iota\)
  on \(\Xi\), which \Cref{lem:term-monad}\Cref{itm:tm-free} makes equal, so every satisfaction
  condition of \Cref{def:qalg} quantifies for \(\bar S\) over a subset of the
  assignments quantified over for \(T_VA\), and \(T_VA\in\Mod(V)\) forces
  \(\bar S\models\varphi\) for every \(\varphi\in V\). The inclusion
  \(\iota_S\colon\bar S\hookrightarrow\Free_VA\) is an \(\Omega\)-homomorphism and
  nonexpansive, so a morphism of \(\Mod(V)\), and \(\eta^V_A\) corestricts to a
  nonexpansive map \(\bar\eta\colon A\to G_V\bar S\) with
  \(G_V(\iota_S)\circ\bar\eta=\eta^V_A\). Now transpose \(\bar\eta\) along
  \(\Free_V\dashv G_V\) as in \eqref{eq:tmap-transpose} and chase the triangle
  \[
  \begin{tikzcd}[row sep=1.7em, column sep=3.6em, ampersand replacement=\&]
    A\arrow[r,"\eta^V_A"]\arrow[dr,"\bar\eta"']
      \&T_VA\arrow[d,"G_V(\bar\eta^\dagger)"]
        \arrow[dr,bend left=18,"1_{T_VA}"]\&\\
    \&G_V\bar S\arrow[r,hook,"G_V(\iota_S)"']\&T_VA\mathrlap{\,:}
  \end{tikzcd}
  \]
  the composite \(\iota_S\circ\bar\eta^\dagger\colon\Free_VA\to\Free_VA\) is a
  morphism of \(\Mod(V)\) with
  \(G_V(\iota_S\circ\bar\eta^\dagger)\circ\eta^V_A
  =G_V(\iota_S)\circ\bar\eta=\eta^V_A=G_V(1_{\Free_VA})\circ\eta^V_A\), so the
  uniqueness in the transposition bijection for \(V\) gives
  \(\iota_S\circ\bar\eta^\dagger=1_{\Free_VA}\). Every \(t\in T_VA\) is then
  the image \(\iota_S(s)\) of an element \(s\) of \(S\), so the inclusion
  \(\iota_S\) is surjective and \(S=T_VA\).
\item\label{itm:tmap-monad} \textbf{Naturality and the Monad Axioms.} Let
  \(f\colon A\to B\) be a morphism of \(\Met\). Both
  \(q_B\circ T_Uf\) and \(T_{U'}f\circ q_A\) underlie morphisms
  \(\Free_UA\to\Free_{U'}B\) of \(\Mod(U)\), namely
  \((\eta^{U'}_B)^\dagger\circ\Free_Uf\) and
  \(\Free_{U'}f\circ(\eta^{U'}_A)^\dagger\), with \(\Free_{U'}f\) a morphism of
  \(\Mod(U)\) by \Cref{itm:tmap-restrict}, and
  \begin{align}
    q_B\circ T_Uf\circ\eta^U_A
      &=q_B\circ\eta^U_B\circ f
      &&\eta^U\text{ natural at }f,\notag\\
      &=\eta^{U'}_B\circ f,
      &&\text{triangle for }q_B,\notag\\
    T_{U'}f\circ q_A\circ\eta^U_A
      &=T_{U'}f\circ\eta^{U'}_A
      &&\text{triangle for }q_A,\label{eq:tmap-nat}\\
      &=\eta^{U'}_B\circ f,
      &&\eta^{U'}\text{ natural at }f,\notag
  \end{align}
  so the uniqueness in \eqref{eq:tmap-transpose} makes the naturality square of
  \(q\) at \(f\) commute. Of the two axioms of a morphism of monads, the unit
  axiom \(q_A\circ\eta^U_A=\eta^{U'}_A\) is the triangle of
  \Cref{itm:tmap-transpose}. The multiplication axiom is the commutativity,
  for every \(A\) in \(\Met\), of the square
  \begin{equation}\label{eq:tmap-mult}
  \begin{tikzcd}[row sep=2.2em, column sep=3.4em, ampersand replacement=\&]
    T_UT_UA\arrow[r,"T_Uq_A"]\arrow[d,"\mu^U_A"']
      \&T_UT_{U'}A\arrow[r,"q_{T_{U'}A}"]
      \&T_{U'}T_{U'}A\arrow[d,"\mu^{U'}_A"]\\
    T_UA\arrow[rr,"q_A"']\&\&T_{U'}A
  \end{tikzcd}
  \end{equation}
  in which \(\mu^U_A=G_U\bigl(\varepsilon^U_{\Free_UA}\bigr)\) and
  \(\mu^{U'}_A=G_{U'}\bigl(\varepsilon^{U'}_{\Free_{U'}A}\bigr)\) are the
  multiplications of the two induced monads, with \(\varepsilon^U\) and
  \(\varepsilon^{U'}\) the counits of the two adjunctions. Both composites of
  \eqref{eq:tmap-mult} underlie morphisms \(\Free_U(T_UA)\to\Free_{U'}A\) of
  \(\Mod(U)\): the clockwise one is built from \(\Free_U(q_A)\), from
  \((\eta^{U'}_{T_{U'}A})^\dagger\) and from \(\varepsilon^{U'}_{\Free_{U'}A}\),
  a morphism of \(\Mod(U')\) and so one of \(\Mod(U)\), the anticlockwise one
  from \(\varepsilon^U_{\Free_UA}\) and \((\eta^{U'}_A)^\dagger\). They agree
  after precomposition with the unit component
  \(\eta^U_{T_UA}\colon T_UA\to T_UT_UA\) of the monad \(T_U\),
  \begin{align}
    q_A\circ\mu^U_A\circ\eta^U_{T_UA}
      &=q_A,
      &&\text{triangle id. of }\Free_U\dashv G_U,\notag\\
    \mu^{U'}_A\circ q_{T_{U'}A}\circ T_Uq_A\circ\eta^U_{T_UA}
      &=\mu^{U'}_A\circ q_{T_{U'}A}\circ\eta^U_{T_{U'}A}\circ q_A
      &&\eta^U\ \text{nat.\ at}\ q_A,\label{eq:tmap-mult-check}\\
      &=\mu^{U'}_A\circ\eta^{U'}_{T_{U'}A}\circ q_A
      &&\text{triangle for }q_{T_{U'}A},\notag\\
      &=q_A,
      &&\text{triangle id.\ of }\Free_{U'}\dashv G_{U'},\notag
  \end{align}
  so the uniqueness in \eqref{eq:tmap-transpose} makes \eqref{eq:tmap-mult}
  commute, and \(q\) is a morphism of monads.
\item\label{itm:tmap-surj} \textbf{Surjectivity and Nonexpansiveness.} The map
  \(q_A\) underlies a morphism of \(\QAlg\), so it is nonexpansive by
  \Cref{def:qalg}. Its image \(q_A(T_UA)\subseteq T_{U'}A\) contains
  \(\eta^{U'}_A(A)=q_A(\eta^U_A(A))\) and is closed under every operation,
  since \(q_A\) is an \(\Omega\)-homomorphism and
  \(f^{T_{U'}A}(q_At_1,\dots,q_At_n)=q_A\bigl(f^{T_UA}(t_1,\dots,t_n)\bigr)\)
  exhibits each value on elements of the image as an element of the image. By
  \Cref{itm:tmap-generation} applied with \(V=U'\), the only subset of
  \(T_{U'}A\) that contains \(\eta^{U'}_A(A)\) and is closed under every
  operation is \(T_{U'}A\) itself, so \(q_A(T_UA)=T_{U'}A\), and \(q_A\) is
  surjective.
\item\label{itm:tmap-ph} \textbf{Persistent Homology of the Comparison.} For
  part \Cref{itm:tmap-homology} apply the functor
  \(\PH_n\colon\Met\to\Ab^{\Vfin^{\op}}\) of
  \Cref{thm:composite}\Cref{itm:comp-ph} to the nonexpansive map \(q_A\), and
  let \(\PH_n(q)_A\) be the resulting map
  \(\PH_n(q_A)\colon\PH^U_n(A)\to\PH^{U'}_n(A)\). For every
  morphism \(f\colon A\to B\) of \(\Met\), that functor carries the left
  of the two squares below, the naturality square of \(q\) at \(f\)
  established in \Cref{itm:tmap-monad}, to the homology square on its right,
  \[
  \begin{tikzcd}[row sep=2.1em, column sep=2.9em, ampersand replacement=\&]
    T_UA\arrow[r,"T_Uf"]\arrow[d,two heads,"q_A"']
      \&T_UB\arrow[d,two heads,"q_B"]\\
    T_{U'}A\arrow[r,"T_{U'}f"']\&T_{U'}B,
  \end{tikzcd}
  \qquad
  \begin{tikzcd}[row sep=2.1em, column sep=2.9em, ampersand replacement=\&]
    \PH^U_n(A)\arrow[r,"\PH_n(T_Uf)"]\arrow[d,"\PH_n(q_A)"']
      \&\PH^U_n(B)\arrow[d,"\PH_n(q_B)"]\\
    \PH^{U'}_n(A)\arrow[r,"\PH_n(T_{U'}f)"']\&\PH^{U'}_n(B),
  \end{tikzcd}
  \]
  because a functor preserves commuting squares. The right squares are the
  naturality squares of the family \((\PH_n(q_A))_A\), which is therefore a
  natural transformation \(\PH_n(q)\colon\PH^U_n\Rightarrow\PH^{U'}_n\) between
  functors \(\Met\to\Ab^{\Vfin^{\op}}\).
\item\label{itm:tmap-bound} \textbf{The Bound.} Assume \(q_A\) is bijective
  and \(d^{T_UA}(t,s)\le d^{T_{U'}A}(q_At,q_As)+\delta\) for all
  \(t,s\in T_UA\). Let \(Y\) be the pair consisting of the set \(\Ob T_UA\)
  and the function
  \[
    d_Y\colon\Ob T_UA\times\Ob T_UA\longrightarrow\Rp,
    \qquad
    d_Y(t,s)=d^{T_{U'}A}(q_At,q_As).
  \]
  It is an object of \(\Met\): the function \(d_Y\) inherits
  \(d_Y(t,t)=0\), symmetry and the triangle inequality from
  \(d^{T_{U'}A}\), and \(d_Y(t,s)=0\) forces \(q_At=q_As\), since
  \(T_{U'}A\) is separated, and then \(t=s\), since \(q_A\) is injective. The
  objects \(X=T_UA\) and \(Y\) of \(\Met\) share their underlying set, and the
  two comparisons of \Cref{thm:distortion} hold for them,
  \begin{align}
    d_X(t,s)&\le d^{T_{U'}A}(q_At,q_As)+\delta=d_Y(t,s)+\delta,
      &&\text{hypothesis},\notag\\
    d_Y(t,s)&=d^{T_{U'}A}(q_At,q_As)\le d_X(t,s)\le d_X(t,s)+\delta,
      &&q_A\text{ nonexpansive},\label{eq:tmap-bound}
  \end{align}
  so \(d_I\bigl(\PH_n(X),\PH_n(Y)\bigr)\le(n+1)\delta\). That factor is a count
  of summands, and here it counts as follows. An \(m\)-simplex of the length
  nerve the two objects share is a tuple \(\mathbf{t}=\langle t_0,\dots,t_m\rangle\)
  of elements of \(T_UA\); it has \(m\) consecutive pairs, its two lengths are the
  \(m\)-term sums \(\lambda_X\mathbf{t}=\sum_{i=1}^{m}d_X(t_{i-1},t_i)\) and
  \(\lambda_Y\mathbf{t}=\sum_{i=1}^{m}d_Y(t_{i-1},t_i)\), and
  \eqref{eq:tmap-bound} bounds each of the \(m\) differences by \(\delta\), so
  \eqref{eq:length-shift} reads
  \[
    \lambda_Y\mathbf{t}-\lambda_X\mathbf{t}
      =\sum_{i=1}^{m}\bigl(d^{T_{U'}A}(q_At_{i-1},q_At_i)-d^{T_UA}(t_{i-1},t_i)\bigr)
      \in[-m\delta,\,m\delta].
  \]
  The group \(\PH_n\) is computed from the degrees \(n-1\), \(n\) and \(n+1\)
  alone, and the truncation in the proof of \Cref{thm:distortion} discards
  everything above \(n+1\), so \(m\le n+1\) throughout and the largest of these
  counts is \(m=n+1\): a simplex \(\langle t_0,\dots,t_{n+1}\rangle\) carries
  \(n+1\) distances, each of which the axioms of \(U'\smallsetminus U\) may
  shorten by \(\delta\), and its length can therefore drop by
  \((n+1)\delta\) and by no more. That worst case is the shift for which the
  two maps \(\varphi_\ell\) and \(\psi_\ell\) of \eqref{eq:dist-diagram} are
  defined, so \(\Delta=(n+1)\delta\), and no smaller shift makes them well
  defined once one \((n+1)\)-simplex has all \(n+1\) of its distances moved by
  \(\delta\). The map \(q_A\), read
  as a map \(\bar q\colon Y\to T_{U'}A\), preserves distances by the definition
  of \(d_Y\) and is bijective, so \(\bar q\) and its inverse are morphisms of
  \(\Met\), and the pair with the components
  \(\varphi_\ell=\PH_n(\bar q)(\ell)\) and
  \(\psi_\ell=\PH_n(\bar q^{-1})(\ell)\) is a \(0\)-interleaving of
  \(\PH_n(Y)\) and \(\PH_n(T_{U'}A)\), its two composites being the identities,
  which are the transition maps along \(\ell\le\ell+0\). So
  \(d_I\bigl(\PH_n(Y),\PH_n(T_{U'}A)\bigr)=0\), and the triangle inequality of
  \Cref{prop:interleaving-vcat} through the midpoint \(\PH_n(Y)\) closes the
  computation of \Cref{itm:boundedInterleaving},
  \begin{align*}
    d_I\bigl(\PH^U_n(A),\PH^{U'}_n(A)\bigr)
    &\le d_I\bigl(\PH_n(X),\PH_n(Y)\bigr)
      +d_I\bigl(\PH_n(Y),\PH_n(T_{U'}A)\bigr)\\
    &\le(n+1)\delta+0.\qedhere
  \end{align*}
\end{enumerate}
\end{proof}

\begin{figure}[p]
\centering
\scalebox{0.92}{\input{figures/pipeline.tikz}}
\caption{The base \(A\) is the four marks
\(0,1,3,7\) of a Golomb ruler, so its six distances \(1,2,3,4,6,7\) differ,
and \(T_UA\) is the fifteen nonempty subsets under union and the Hausdorff metric
\(d_H\). The points keep their places in every panel: two copies of one
seven-point picture, the subsets without \(d\) and those with it, and the outlier
\(\{d\}\). Heavy edges are pairs at distance \(1\), fainter ones the longer pairs,
shading marks \(2\)-simplices, ranks are over \(\mathbb{F}_2\). By
\eqref{eq:endpoint-split} births lie on cool rules and deaths on warm ones, each
rule carries an endpoint, so the inclusion of \Cref{cor:critical} is an equality
in both degrees. The tinted quadrant holds the bars alive at \(\ell=1\), the
\(\ell=1\) column of the table
\cite[\S{}VII.1]{EdelsbrunnerHarer2010ComputationalTopology}. The dendrogram is
\(\PH_0\) as single linkage by \Cref{cor:degree-zero}, its merge heights the
deaths \(1,2,4\), every cluster a subalgebra, and the sixteen \(H_1\)-classes
at \((1,2)\) are four per unit triangle, three digons of \Cref{lem:digon} and a
directed triangle, one triangle over each of the four subsets \(S\subseteq\{c,d\}\),
on \(S\cup\{a\}\), \(S\cup\{b\}\) and \(S\cup\{a,b\}\),
the only unit triangles of \(T_UA\).}
\label{fig:pipeline}
\end{figure}

\end{document}